        \theoremstyle{plain}
        \newtheorem{thm}{Theorem}[section]
        \newtheorem{cor}[thm]{Corollary}
        \newtheorem{lem}[thm]{Lemma}
        \newtheorem{prop}[thm]{Proposition}
        \newtheorem{mainthms}{Theorem}
        \theoremstyle{definition}
        \newtheorem{defn}[thm]{Definition}
        \newtheorem{ex}[thm]{Example}
        \theoremstyle{remark}
        \newtheorem{rem}[thm]{Remark}
        \newcommand{\suchthat}{\,:\,}
        \newcommand{\itemref}[1]{\eqref{#1}}
        \newcommand{\opcit}[1][]{[\emph{op.\ cit.}{#1}]\xspace}
        \newcommand{\loccit}{[\emph{loc.\ cit.}]\xspace}
        \numberwithin{equation}{section}
        \newcommand{\C}{\mathbb{C}}
        \newcommand{\Z}{\mathbb{Z}}
        \newcommand{\Orb}{\mathcal{O}}    
       \DeclareMathOperator{\spec}{Spec}
        \newcommand{\red}[1]{{#1}_{\mathrm{red}}}
        \newcommand{\COH}[1]{\mathbf{Coh}({#1})}
        \newcommand{\QCOHB}{\mathbf{QCoh}}
        \newcommand{\QCOH}[2][]{\QCOHB^{#1}({#2})}
        \newcommand{\Aff}{\mathbb{A}}
           \newcommand{\Et}{\mathrm{\acute{E}t}}
           \newcommand{\bflat}{\mathrm{flb}}
           \newcommand{\blfp}{\mathrm{lfpb}}
           \newcommand{\qf}{\mathrm{qf}}
           \newcommand{\fp}{\mathrm{fp}}
           \newcommand{\fppf}{\mathrm{fppf}}
           \newcommand{\bcompact}{\mathrm{prb}}
        \newcommand{\QCOHSTK}[3][]{\underline{\mathrm{QCoh}}_{{#2}/{#3}}^{{#1}}}
        \newcommand{\Homstk}{\underline{\Hom}}
        \newcommand{\Quotshf}{\underline{\mathrm{Quot}}}
        \DeclareMathOperator{\Der}{Der}
        \DeclareMathOperator{\Exal}{Exal}   
        \newcommand{\MOD}[1]{\mathbf{Mod}({#1})}          
        \newcommand{\ideal}{\triangleleft}      
        \DeclareMathOperator{\Def}{Def}
        \newcommand{\EXAL}{{\mathbf{Exal}}}
        \DeclareMathOperator{\Obs}{Obs}
        \DeclareMathOperator{\obs}{obs}
        \newcommand{\DEF}{{\mathbf{Def}}}
        \newcommand{\extn}[1]{[{#1}]}
        \DeclareMathOperator{\coker}{coker}
        \DeclareMathOperator{\Hom}{Hom}
        \DeclareMathOperator{\Ext}{Ext}
        \DeclareMathOperator{\Tor}{Tor}
        \newcommand{\STor}{\mathscr{T}or}
        \newcommand{\SHom}{\mathscr{H}om}
        \newcommand{\ID}[1]{\mathrm{Id}_{#1}}    
        \DeclareMathOperator{\Aut}{Aut}
        \DeclareMathOperator{\Mor}{Mor}
        \DeclareMathOperator{\Ret}{Ret}
        \newcommand{\tensor}{\otimes}
        \newcommand{\opp}{\circ}
        \newcommand{\FIB}[2]{{#1}({#2})}
                \newcommand{\AB}{\mathbf{Ab}}
                \newcommand{\SETS}{\mathbf{Sets}}        
                \newcommand{\Sch}{\mathbf{Sch}}
                \newcommand{\SCH}[2][]{\Sch^{#1}/{#2}}
\newcommand{\Coll}{\mathscr{C}}
\newcommand{\Homog}[1]{\mathbf{{#1}}}
\newcommand{\HNIL}{\Homog{Nil}}
\newcommand{\HCL}{\Homog{Cl}}
\newcommand{\HrNIL}{\Homog{rNil}}
\newcommand{\HrCL}{\Homog{rCl}}
\newcommand{\HA}{\Homog{Aff}}
\newcommand{\trvext}[2]{{\imath}_{{#1},{#2}}}
\newcommand{\trvret}[2]{{r}_{{#1},{#2}}}
\newcommand{\shv}[1]{\mathcal{{#1}}}
\newcommand{\stk}[1]{{{#1}}}
\newcommand{\fndefn}[1]{\emph{{#1}}}
\newcommand{\HS}[2]{\underline{\mathrm{HS}}_{{#1}/{#2}}}
\newcommand{\MOR}[3][]{\underline{\Mor}^{{#1}}_{{#2}/{#3}}} 
\DeclareMathOperator{\Isom}{Isom}
\newcommand{\Isomstk}{\underline{\Isom}}
\newcommand{\COHSTK}[2]{\underline{\mathrm{Coh}}_{{#1}/{#2}}}
\renewcommand{\subset}{\subseteq}
\numberwithin{equation}{section}
\newenvironment{mydescription}{%
   
   \begin{description}%
}{%
   \end{description}%
}       
\newcommand{\spref}[1]{\href{http://stacks.math.columbia.edu/tag/#1}{#1}}
\title{Openness of versality via coherent functors} 
\author{Jack Hall}
\subjclass[2010]{Primary 14D23; Secondary 14D20, 14D15}
\date{April 4th, 2013}
\begin{document}
\begin{abstract} 
We give a proof of openness of versality using coherent functors. As an application, we streamline Artin's criterion for algebraicity of a stack. We also introduce multi-step obstruction theories, employing them to produce obstruction theories for the stack of coherent sheaves, the Quot functor, and spaces of maps in the presence of non-flatness.
\end{abstract}
\address{Department of Mathematics, KTH, 100 44 Stockholm, Sweden}
\email{jackhall@kth.se}
\maketitle
\section*{Introduction}
In M. Artin's classic paper on stacks, a criterion for algebraicity is
expounded \cite[Thm.~5.3]{MR0399094}. In this paper, we take a novel
approach to algebraicity, proving an algebraicity criterion for stacks
which is easier to apply, more widely applicable, and admitting a
substantially simpler proof.
\begin{mainthms}\label{mainthms:repcrit2}
  Fix an excellent scheme $S$ and a category $\stk{X}$, fibered in groupoids
  over the category of $S$-schemes, $\SCH{S}$. Then, $\stk{X}$ is an
  algebraic stack, locally of finite presentation over $S$, if and  only if
  the following conditions are satisfied.   
  \begin{enumerate}
  \item \emph{[Stack]} $\stk{X}$ is a stack over the site
    $(\SCH{S})_\Et$.
  \item \emph{[Limit preservation]} For any inverse system of affine
    $S$-schemes $\{\spec A_j\}_{j\in 
      J}$ with limit $\spec A$, the natural functor: 
    \[
    \varinjlim_j \FIB{\stk{X}}{\spec A_j} \to \FIB{\stk{X}}{\spec A}
    \]
    is an equivalence of categories. 
  \item \emph{[Homogeneity]} For any diagram of affine $S$-schemes
    $[\spec B
    \leftarrow \spec A \xrightarrow{{i}} \spec A']$, with ${i}$
    a nilpotent closed immersion, the natural functor:   
    \[
    \FIB{\stk{X}}{\spec(B\times_{A} A')}
    \to \FIB{\stk{X}}{\spec A'} \times_{\FIB{\stk{X}}{\spec A}}
    \FIB{\stk{X}}{\spec B} 
    \]
    is an equivalence of categories. 
  \item \emph{[Effectivity]} For any local noetherian
    ring $(B,\mathfrak{m})$, such that the ring $B$ is
    $\mathfrak{m}$-adically complete, with an $S$-scheme structure
    $\spec B \to S$ such that the 
    induced morphism $\spec 
    (B/\mathfrak{m}) \to S$ is locally of finite type, 
    the natural functor:
    \[
    \FIB{\stk{X}}{\spec B} \to \varprojlim_n \FIB{\stk{X}}{\spec
      (B/\mathfrak{m}^{n})} 
    \]
    is an equivalence of categories.
  \item \emph{[Conditions on automorphisms and deformations]} For any affine
    $S$-scheme $T$, locally of finite type 
    over $S$, and $\xi\in \stk{X}(T)$, the functors $\Aut_{\stk{X}/S}(\xi, -)$,
    $\Def_{\stk{X}/S}(\xi,-) : \QCOH{T} \to \AB$ are coherent. 
  \item \emph{[Conditions on obstructions]} For any affine $S$-scheme $T$,
    locally of finite type over $S$, and $\xi\in \stk{X}(T)$, there exists
    an integer $n$ and a 
    coherent $n$-step obstruction theory for $\stk{X}$ at $\xi$.  
  \end{enumerate}
\end{mainthms}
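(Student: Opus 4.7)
The necessity direction is largely a summary of standard facts: for an algebraic stack $\stk{X}$ locally of finite presentation over an excellent scheme $S$, sheafification, limit preservation, homogeneity, Grothendieck existence (for effectivity), and coherence of $\Aut$ and $\Def$ are all known. The only nontrivial ingredient on this side is the construction of a coherent $n$-step obstruction theory for $\stk{X}$, which I expect to extract from the cotangent complex together with the multi-step obstruction formalism developed earlier in the paper (this is where the $n$-step generalization is essential, because the usual one-step theory exists only under flatness hypotheses).

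\textbf{Sufficiency.} The plan is the usual strategy going back to Artin, refined so that the delicate point — openness of versality — is handled by the coherent-functor machinery. I fix a field-valued point $x_0 : \spec k \to \stk{X}$ with $k$ of finite type over~$S$, and build a smooth chart through $x_0$ in four steps. First, combine homogeneity~(3) with the coherence of $\Def$ from~(5) (which supplies the necessary finite-dimensionality) to produce, by a Schlessinger-style argument, a formally versal deformation over a complete local noetherian $\Orb_{S,s_0}$-algebra $(R,\mathfrak{m})$. Second, use effectivity~(4) to promote this formal object to an honest $\xi \in \FIB{\stk{X}}{\spec R}$. Third, use limit preservation~(2), the excellence of $S$, and Artin approximation to find an affine $S$-scheme $U$ locally of finite type, a point $u \in U$ with residue field $k$, and an object $\xi_U \in \stk{X}(U)$ restricting to $x_0$ at $u$ and agreeing with $\xi$ modulo $\mathfrak{m}^2$ (hence formally versal at $u$).

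\textbf{The main obstacle: openness of versality.} The heart of the argument is to show that the locus in $U$ where $\xi_U : U \to \stk{X}$ is formally versal is open. Once this is in hand, combining formal versality with coherence of $\Aut$ (which controls the relative inertia) upgrades versality to formal smoothness in a neighborhood, and then limit preservation promotes formal smoothness to smoothness, producing the desired smooth chart. To establish openness of versality I will translate obstructedness into the vanishing of certain functors on $\QCOH{U}$. Concretely, given the $n$-step coherent obstruction theory from~(6) and the coherent $\Aut$ and $\Def$ from~(5), the failure of versality at a point $u' \in U$ can be measured by a finite complex of coherent functors on $\QCOH{\spec \Orb_{U,u'}}$; because coherent functors are preserved under the standard operations and vanish on an open subset whenever they vanish at a point (this is precisely the key structural property developed earlier in the paper), the locus of versality is open. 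Assembling charts at each point then yields a smooth surjection onto $\stk{X}$.

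\textbf{Summary of dependencies.} The logical skeleton is: conditions (2)--(4) provide an algebraic model $(U,u)$ for the formal versal deformation; condition~(5) controls automorphisms and first-order deformations; condition~(6), through the coherent multi-step obstruction theory, controls obstructions; and all of these are then fed into the coherent-functor criterion for openness of versality to conclude. I expect the bulk of the work, and the genuinely new content of the paper, to be in justifying that the obstruction data of an $n$-step theory assembles into a coherent functor whose support is closed, so that the versality locus is open — the Schlessinger--Artin--Grothendieck ingredients are standard in comparison.
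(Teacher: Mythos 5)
Your outline of the core mechanism is essentially the paper's: versality is translated into the vanishing of $\Exal_{\stk{X}}(T,-)$ on residue fields (Lemma \ref{lem:smooth}), conditions (5) and (6) are fed through the exact sequence of Proposition \ref{prop:derdefseq} and the obstruction maps to show that $\Exal_{\stk{X}}(T,-)$ is \emph{finitely generated} (Corollary \ref{cor:derdefseq}), and an Ogus--Bergman--type statement for finitely generated, half-exact, limit-preserving functors (the half-exactness coming from homogeneity via the six-term sequence of Corollary \ref{cor:6term}) then gives vanishing on an open neighbourhood, hence formal smoothness there (Theorem \ref{thm:opennessversal}). Two remarks on the periphery: the paper packages steps (i)--(iii) of your sufficiency argument via Conrad--de Jong rather than classical Artin approximation, and what is needed of $\Exal$ is finite generation together with half-exactness and commutation with direct limits, not coherence of a ``finite complex'' per se --- but these are cosmetic differences.

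The genuine gap is that you never address the representability of the diagonal. A ``smooth chart'' $U\to\stk{X}$, and a fortiori a smooth surjection from a scheme, only makes sense once $\Delta_{\stk{X}/S}$ is known to be representable by algebraic spaces, and none of conditions (1)--(6) assumes this. The intermediate result that your argument actually proves is Corollary \ref{cor:repcrit1}, which takes representability of the diagonal as a \emph{hypothesis}. The passage from there to Theorem \ref{mainthms:repcrit2} is a separate bootstrapping argument occupying half of \S\ref{sec:crit_alg}: one applies the criterion to the inertia groupoids $I_{X,T}=X\times_{X\times_S X}(\SCH{T})$ and $R_{X,T}=X\times_{(X\times_{X\times_S X}X)}(\SCH{T})$, using Proposition \ref{prop:defexactseq} to identify $\Def_{\Delta}\cong\Aut_{X/S}$ and to realize $\Def_{X/S}$ (resp.\ $\Aut_{X/S}$) as a coherent $1$-step obstruction theory for $I_{X,T}\to T$ (resp.\ for $R_{X,T}\to T$); the hypothesis that $\Aut_{X/S}$ is coherent in condition (5) is consumed precisely here, not in ``upgrading versality to smoothness'' as you suggest. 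Descending from the third diagonal (an isomorphism) to the second and then the first is what makes the induction close. Without this component your proof establishes algebraicity only for groupoids already known to have representable diagonal, which is strictly weaker than the stated theorem.
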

Except for conditions (5) and (6), Theorem
\ref{mainthms:repcrit2} is similar to Artin's criterion \cite[Thm.\
5.3]{MR0399094}. Note, however, that we have fewer conditions, and
these conditions are cleaner (e.g.\ no deformation situations).  The
conditions of Theorem \ref{mainthms:repcrit2} are also stable under
composition, in the sense of \cite{starr-2006}.      

This paper began with the realization that the homogeneity condition
(3), which is stronger than the analogous condition of
\cite[(S1')]{MR0399094}, together with conditions (5) and (6), simplifies and broadens the
applicability of existing results.

Our usage of the term ``coherent'' in conditions (5) and (6) of
Theorem \ref{mainthms:repcrit2} is in a different sense than what many
readers may be familiar with, so we recall the following definition of
M. Auslander \cite{MR0212070}. For an affine scheme $S$, a functor $F
: \QCOH{S} \to \AB$ is \fndefn{coherent} if there exists a morphism of
quasicoherent $\Orb_S$-modules $\shv{K}_1 \to \shv{K}_2$, such that
for all $\shv{I}\in \QCOH{S}$, there is a natural isomorphism of
abelian groups:
\[
F(\shv{I}) \cong \coker(\Hom_{\Orb_S}(\shv{K}_2,\shv{I}) \to
\Hom_{\Orb_S}(\shv{K}_1,\shv{I})). 
\]
It is proven in \cite{hallj_coho_bc} that most functors arising in
moduli are coherent.
\subsection*{Relation with other work}
The idea of using the $\Exal$ functors to simplify M. Artin's results
should be attributed to H. Flenner \cite{MR638811}. Our results and
techniques are quite different, however.  In particular, H. Flenner
\opcit does not address the relationship between formal smoothness and
formal versality. 

Independently, 
work in the Stacks Project \cite[\spref{07T0}]{stacks-project} has 
provided a different perspective on Artin's results. This approach, however, requires that the deformation--obstruction theory is given by
a bounded complex. If there are non-flat or non-tame objects in the
moduli problem, the existence of such a complex is subtle. The
problems with non-tame stacks can be dealt with by
\cite[Thm.~B]{hallj_coho_bc}. The problems with non-flatness can be
handled by derived algebraic geometry
\cite[\href{http://math.columbia.edu/~dejong/wordpress/?p=2572}{blog:2572}]{stacks-project}
or $2$-step obstruction theories
(c.f.~\S\S\ref{sec:example_qcoh}--\ref{sec:example_hs}). 

Using the ideas of B. T\"oen and G. Vezzosi
\cite[1.4]{MR2394633}, J. Lurie has developed a criterion for
algebraicity in the derived context \cite[Thm.\
3.2.1]{dag14}. Conditions (5) and (6) of Theorem
\ref{mainthms:repcrit2} are related 
to Lurie's requirement of the existence of a cotangent complex. As
Lurie observes, his criterion is not applicable to Artin
stacks, though it is a future intention to make it so \cite[Rem.\
2]{dag14}. J. Pridham has proved a criterion for Artin stacks
\cite[Thm.\ 3.16]{pridham_crit}, which is related to the results of
Lurie's PhD Thesis \cite[Thm.\ 7.1.6 \& Thm.\ 7.5.1]{luriethesis}.

To prove that the Quot functors for
separated Deligne-Mumford stacks are algebraic spaces, M. Olsson and
J. Starr \cite[Thm.~1.1]{MR2007396} did not apply \cite[Cor.\
5.4]{MR0399094}, which like \opcit[, Thm.\ 5.3], is formulated in
terms of a single-step obstruction theory. The reason for this is
simple: in the presence of non-flatness, it is difficult to formulate
a single-step obstruction theory with good properties. 

They circumvented this predicament by the use of Artin's original
algebraicity criterion \cite[Thm.\ 5.3]{MR0260746}. This earlier
algebraicity criterion is not formulated in terms of the existence and
properties of a single-step obstruction theory, but in terms of
certain explicit lifting problems---making its application more
complicated (note that J. Starr \cite[Thm.\ 2.15]{starr-2006} has
subsequently generalized the criteria of \cite[Thm.\ 5.3]{MR0260746}
to stacks).  To solve these lifting problems, M. Olsson and J. Starr
\cite[Lem.\ 2.5]{MR2007396} used a $2$-step process. This $2$-step
process is insufficiently functorial to define a multi-step
obstruction theory in the sense of this paper. It is, however, closely
related, and inspired the multi-step obstruction theories we define.

M. Olsson and J. Starr \cite[p.\ 4077]{MR2007396} noted that M. Artin
had incorrectly computed the obstruction theory of the Quot functor in
the prescence of non-flatness \cite[6.4]{MR0260746}. We have also
located some other articles in the literature that have not observed
the subtlety of deformation theory in the presence of non-flatness
(see \S\ref{sec:example_qcoh} and \S\ref{sec:example_hs}). We would
like to emphasize that the impact of this on the main ideas of these
articles is nil. Indeed, the relevant arguments in these articles are
still perfectly valid in the flat case---covering most cases of
interest to geometers. In the non-flat case, the relevant statements
in these articles can be shown to hold with the techniques and
examples of this article.

By work of M. Olsson \cite[Rem.\
1.7]{MR2206635}, the conditions of Theorem 
\ref{mainthms:repcrit2} are seen to be necessary. The sufficiency of
the conditions of Theorem \ref{mainthms:repcrit2} is
demonstrated by the following sequence of observations:   
\begin{enumerate}
\item[(i)] the existence of formally versal deformations,
\item[(ii)] the existence of algebraizations of formally versal
  deformations, and 
\item[(iii)] formal versality at a point implies smoothness in a
  neighbourhood. 
\end{enumerate}
Using the generalizations of M. Artin's techniques \cite{MR0399094}
due to B. Conrad and J. de Jong \cite[Thm.\ 1.5]{MR1935511},
conditions (1)--(4) of Theorem \ref{mainthms:repcrit2} prove (i) and
(ii). The main contribution of this paper is the usage of conditions
(3), (5), and (6) of Theorem \ref{mainthms:repcrit2} to prove
(iii). 

Note that in our proof of (iii), the techniques of Artin
approximation \cite{MR0268188} are not used. This is in
contrast to M. Artin's treatments \cite{MR0260746,MR0399094}, where
this technique features prominently. In a paper joint with D. Rydh
\cite{hallj_dary_artin_crit}, we illustrate how refinements of the
homogeneity condition (3) clarify and simplify M. Artin's results on
versality.  
\subsection*{Outline}
In \S\ref{sec:homog}, we discuss the notion of
homogeneity. Homogeneity is a generalization of the Schlessinger-Rim
criteria \cite[Exp.\ VI]{SGA7}. This section is quite categorical, but
it is the only section of the paper that is such. Morally, homogeneity
provides a stack $\stk{X}$ with a linear structure at every point,
which we describe in \S\ref{sec:extensions}. To be precise, for any
scheme $T$, together with an object $\xi \in \FIB{X}{T}$, homogeneity
produces an additive functor $\Exal_{\stk{X}}(\xi,-) : \QCOH{T} \to
\AB$ sharply controlling the deformation theory of $\xi$. The author
learnt these ideas from J. Wise (in person) and his paper
\cite{2011arXiv1111.4200W}, though they are likely well-known, and go
back at least as far as the work of H. Flenner \cite{MR638811}. In
\S\ref{sec:limit_preservation}, we recall and generalize---to the
relative setting---the notion of limit preserving groupoid
\cite[\S1]{MR0399094}.

In \S\ref{sec:fs_fv}, we recall the notions of formal versality and formal 
smoothness. Then, we recast these notions in terms of vanishing criteria
for the functors $\Exal_{\stk{X}}(T,-)$. The central
technical result of this paper is Theorem
\ref{thm:opennessversal}---our new proof of (iii).   

In \S\ref{sec:coherent}, we briefly review coherent functors. In
\S\ref{sec:def_obs_comp}, we formalize multi-step obstruction
theories. In \S\ref{sec:crit_alg}, we prove Theorem 
\ref{mainthms:repcrit2}. 

The remainder of the paper is devoted to applications. In
\S\ref{sec:example_qcoh}, we compute a $2$-step obstruction theory for
the stack of coherent sheaves. Finally, in \S\ref{sec:example_hs}, we
compute a $2$-step obstruction theory
for the stack of morphisms between two algebraic stacks.

In Appendix \ref{app:schlessinger_pushouts}, we prove that pushouts of
algebraic stacks along nilimmersions and 
affine morphisms exist. This enables the verification of the homogeneity
condition (3) in practice. In Appendix \ref{app:tor}, we state two
basic results on local $\Tor$-functors for morphisms of algebraic
stacks. 
\subsection*{Assumptions, conventions, and notations}
For a category $\mathscr{C}$, denote the opposite category by
$\mathscr{C}^\opp$. A \fndefn{fibration} of categories $Q :
\mathscr{C} \to \mathscr{D}$ has the property that every arrow in the
category $\mathscr{D}$ admits a strongly cartesian lift. For an object
$d$ of the category $\mathscr{D}$, we denote the resulting fiber
category by $\FIB{Q}{d}$. It will also be convenient to say that the
category $\mathscr{C}$ is \fndefn{fibered} over $\mathscr{D}$. If the
category $\mathscr{C}$ is fibered over $\mathscr{D}$, and every arrow
in the category $\mathscr{C}$ is strongly cartesian, then we say that
the functor $Q$ is fibered in groupoids. The assumptions guarantee
that if the category $\mathscr{C}$ is fibered in groupoids over
$\mathscr{D}$, then for every object $d$ of the category
$\mathscr{D}$, the fiber category $\FIB{Q}{d}$ is a groupoid.

For a scheme $T$, denote by $|T|$ the underlying topological space
(with the Zariski topology) and $\Orb_T$ the (Zariski) sheaf of rings
on $|T|$. For $t\in |T|$, let $\kappa(t)$ denote the residue field.
Denote by $\QCOH{T}$ (resp.\ $\COH{T}$) the abelian category of
quasicoherent (resp.\ coherent) sheaves on the scheme $T$. Let
$\SCH{T}$ denote the category of schemes over $T$. The big \'etale
site over $T$ will be denoted by $(\SCH{T})_\Et$.  

For a ring $A$ and an $A$-module $M$, denote the quasicoherent
$\Orb_{\spec A}$-module associated to $M$ by $\widetilde{M}$. Denote
the abelian category of all (resp.\ coherent) $A$-modules by $\MOD{A}$
(resp.\ $\COH{A}$). 

As in \cite{stacks-project}, we make no separation assumptions on our
algebraic stacks and spaces. As in \cite{MR2312554}, we use the
lisse-\'etale site for sheaves on algebraic stacks. 

Fix a $1$-morphism of algebraic stacks $f : X \to Y$. Given another
$1$-morphism of algebraic stacks $W \to Y$ we denote the pullback along this
$1$-morphism by $f_W : X_W \to W$. 

A morphism of algebraic $S$-stacks $U \to V$ is a \fndefn{locally
  nilpotent closed immersion} if it is a closed immersion defined by a
quasicoherent sheaf of ideals $\shv{I}$, such that $\fppf$-locally on
$V$ there always exists an integer $n$ such that $\shv{I}^n = (0)$.
\subsection*{Acknowledgements}  
I would like to thank R.~Ile, R.~Skjelnes, and
B.~Williams for  some interesting conversations. A special thanks goes to 
J.~Wise for explaining to me the notion of homogeneity. A very special
thanks is due to D.~Rydh for his tremendous patience and enthusiasm.
\section{Homogeneity}\label{sec:homog}
Schlessinger's conditions \cite{MR0217093}, for a functor of artinian
rings, are fundamental to the theory and understanding of infinitesimal
deformation theory. This was generalized to groupoids by  R.S. Rim
\cite[Exp.\ VI]{SGA7}, clarifying infinitesimal deformation
theory in the presence of automorphisms. These conditions are
instances of the notion of homogeneity, which can be  
traced back to A. Grothendieck \cite[195.II]{MR0146040}. More
recently, a generalisation of these conditions
\cite[Exp.\ VI]{SGA7} was considered by J. Wise
\cite[\S2]{2011arXiv1111.4200W}. In this section, we will develop a 
relative formulation of homogeneity for use in this paper.      

Fix a scheme $S$. An $S$-\fndefn{groupoid} is a pair $(X,a_X)$
consisting of a category $X$ and a fibration in groupoids $a_X : X \to
\SCH{S}$. A $1$-morphism of   
$S$-groupoids $\Phi :(Y,a_Y) \to (Z,a_Z)$ is a functor $\Phi : Y \to Z$ that
commutes strictly over $\SCH{S}$. We will typically refer to an
$S$-groupoid $(X,a_X)$ just as ``$X$''.
\begin{ex}\label{ex:Sch_mor}
  For any $S$-scheme $T$, there is a canonical functor $\SCH{T} \to
  \SCH{S} : (W \to T) \mapsto (W \to T \to S)$ which is faithful. In
  particular, we may view an $S$-scheme $T$ as an
  $S$-groupoid. Thus, a morphism of $S$-schemes $g : U \to V$
  induces a $1$-morphism of $S$-groupoids $\SCH{g} : \SCH{U} \to
  \SCH{V}$. The converse is also true: any $1$-morphism of
  $S$-groupoids $G : \SCH{U} \to \SCH{V}$ is {uniquely} isomorphic
  to a $1$-morphism of the form $\SCH{g}$ for some morphism of
  $S$-schemes $g : U \to V$.
\end{ex}
\begin{defn}
  For an $S$-groupoid $X$, an $X$-\fndefn{scheme} is a pair
  $(T,\sigma_T)$ consisting of an $S$-scheme $T$ together with a
  $1$-morphism of $S$-groupoids $\sigma_T : \SCH{T} \to X$. A morphism
  of $X$-schemes $(f,\alpha_f) : (U,\sigma_U) \to (V,\sigma_V)$ is
  given by a morphism of $S$-schemes $f : {U} \to {V}$ together with a
  $2$-morphism $\alpha_f : \sigma_U \Rightarrow \sigma_V\circ
  \SCH{f}$. The collection of all $X$-schemes forms a $1$-category,
  which we denote as $\SCH{X}$.
\end{defn}
For a $1$-morphism of $S$-groupoids $\Phi : Y \to Z$ there is an
induced functor $\SCH{\Phi} : \SCH{Y} \to \SCH{Z}$. It is readily seen
that for an $S$-groupoid $X$, the 
category $\SCH{X}$ is also an $S$-groupoid. The content
of the $2$-Yoneda Lemma is essentially that the natural $1$-morphism
of $S$-groupoids $\SCH{X} \to X$ is an equivalence. An inverse to this
equivalence is given by picking a clivage for $X$. 

The principal advantage of working with the fibered category 
$\SCH{X}$ is that it admits a \emph{canonical}
clivage. In practice, this means that given an 
$X$-scheme $V$, 
and an $S$-scheme $U$, then for a morphism of $S$-schemes $p : U \to
V$, the way to make $U$ an $X$-scheme is already chosen for us: it is the
composition $\SCH{U} \xrightarrow{\SCH{p}} \SCH{V} \to X$. It is for
this reason that working with $\SCH{X}$ greatly simplifies proofs and
definitions. Calculations, however, are typically easier to perform in
$X$. 

Given a class $P$ 
of morphisms of $S$-schemes and an $S$-groupoid $X$, then a morphism
of $X$-schemes $p : U \to V$ is said to be $P$ if the underlying
morphism of $S$-schemes is $P$.  The following definition is a trivial
generalization of the ideas of M.\ Olsson \cite[App.\ A]{MR2097359},
J. Starr   \cite[\S2]{starr-2006}, and J.\ Wise
\cite[\S2]{2011arXiv1111.4200W}.  
\begin{defn}[$P$-Homogeneity]
  Fix a scheme $S$ and a class $P$ of morphisms of $S$-schemes. A
  $1$-morphism of $S$-groupoids $\Phi : Y\to 
  Z$ is \fndefn{$P$-homogeneous} if the following conditions are
  satisfied.
  \begin{enumerate}
  \item[$(\mathrm{H}^P_1)$] A commutative diagram in the category of
    $Y$-schemes 
    \[ 
    \xymatrix@-0.8pc{T \ar@{^(->}[r]^{i} \ar[d]_p & T'  \ar[d] \\ V
      \ar[r]  & W, }
    \]
    where ${i}$ is a locally nilpotent closed immersion and $p$ is $P$,
    is cocartesian in the category of $Z$-schemes if and only if it is
    cocartesian in the category of $Y$-schemes.
  \item[$(\mathrm{H}^P_2)$] If a diagram of $Y$-schemes $[V
   \xleftarrow{p} T \xrightarrow{{i}} T']$,  where
   ${i}$ is a locally nilpotent closed immersion and $p$ is $P$,
   admits a colimit in the category of $Z$-schemes, then
   there exists a commutative diagram of $Y$-schemes:
   \[ 
    \xymatrix@-0.8pc{T \ar@{^(->}[r]^{i} \ar[d]_p & T'  \ar[d] \\ V
      \ar[r]  &  W.}
    \]
  \end{enumerate}
  An $S$-groupoid $X$ is $P$-homogeneous if its
  structure $1$-morphism is $P$-homogeneous.   
\end{defn}
For homogeneity, we will be interested in the following
classes of morphisms:
\begin{itemize}
  \item[$\HNIL$] -- locally nilpotent closed immersions,
  \item[$\HCL$] -- closed immersions,
  \item[$\HrNIL$] -- morphisms $T \to V$ such that there exists $(T_0
    \to T) \in \HNIL$ with the composition $(T_0 \to T \to V) \in \HNIL$,
  \item[$\HrCL$] -- morphisms $T \to V$ such that there exists $(T_0
    \to T) \in \HNIL$ with the composition $(T_0 \to T \to V) \in \HCL$,
  \item[$\HA$] -- affine morphisms.
\end{itemize}
By \cite[IV.18.12.11]{EGA} universal homeomorphisms are integral, thus
affine. Hence, it is readily deduced that we have a containment 
of classes of morphisms of $S$-schemes:
\[
\xymatrix@-1.5pc{ & \HCL \ar@{}[dr]|-*[@]{\subset}&  &\\ \HNIL
  \ar@{}[ur]|-*[@]{\subset} \ar@{}[dr]|-*[@]{\subset} & &\HrCL
  \ar@{}[r]|-*[@]{\subset}& \HA. \\ &
  \HrNIL \ar@{}[ur]|-*[@]{\subset} &  & }
\]
In \cite[App.\ A]{hallj_dary_artin_crit} it is shown that if $X$ is
limit preserving, in the sense of \cite[\S1]{MR0399094}, and a stack
for the Zariski topology, then $\HrCL$-homogeneity is equivalent to
the condition (S1$'$) of \cite[2.3]{MR0399094}.

J. Wise \cite[Prop.\ 2.1]{2011arXiv1111.4200W} has shown that every 
algebraic stack is $\HA$-homogeneous. In Appendix
\ref{app:schlessinger_pushouts}, we generalize results of D.\ Ferrand
\cite{MR2044495} and obtain techniques to prove that many
``geometric'' moduli problems are $\HA$-homogeneous. We record for
frequent future reference the following 
\begin{lem}\label{lem:homog_pushouts_int}
  Fix a scheme $S$, a class of morphisms $P\subset \HA$, a
  $P$-homogeneous $S$-groupoid $X$, and a diagram of
  $X$-schemes $[V \xleftarrow{p} T \xrightarrow{{i}} T']$, where
  ${i}$ is a locally nilpotent closed immersion and $p$ is
  $P$. Then, there exists a cocartesian diagram in the category of
  $X$-schemes: 
  \[
  \xymatrix@-0.8pc{T \ar@{^(->}[r]^{i} \ar[d]_p & T'  \ar[d]^{p'} \\ V
      \ar@{^(->}[r]^{{i}'}  & V'. }
  \]
  This diagram is also cocartesian in the category of
  $S$-schemes, the morphism ${i}'$ is a locally nilpotent closed
  immersion, $p'$ is affine, and the induced homomorphism of sheaves: 
  \[
  \Orb_{V'} \to {i}'_* \Orb_V \times_{p'_*{i}_*\Orb_T} p'_*\Orb_{T'}
  \]
  is an isomorphism. 
\end{lem}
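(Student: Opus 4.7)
The strategy is to first construct the pushout $V'$ as an $S$-scheme with the required properties, and then use the $P$-homogeneity of $X$ to lift it to an $X$-scheme in a way that makes the resulting square cocartesian in $\SCH{X}$.

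For the first step, I invoke Appendix~\ref{app:schlessinger_pushouts}, which generalises D.\ Ferrand's construction of pushouts of schemes along a closed immersion and an affine morphism. Since $i$ is a locally nilpotent closed immersion and $p$ is affine (because $P \subseteq \HA$), this directly produces a cocartesian square in $\SCH{S}$ of the required shape, together with the assertions that $i'$ is a locally nilpotent closed immersion, $p'$ is affine, and the structure sheaf of $V'$ is given by the stated fibre-product formula.

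For the second step, I apply $(\mathrm{H}^P_2)$ to the structure morphism $X \to \SCH{S}$: since the diagram of $X$-schemes admits the colimit $V'$ in $\SCH{S}$, there exists a commutative completion in $\SCH{X}$ by some $X$-scheme $W$. The universal property of $V'$ in $\SCH{S}$ produces a unique morphism $V' \to W$ of $S$-schemes compatible with the maps from $V$ and $T'$; using the canonical clivage on $\SCH{X}$, I endow $V'$ with the $X$-scheme structure pulled back from $W$. The morphisms $V \to W$ and $T' \to W$ from the completion then factor as $V \to V' \to W$ and $T' \to V' \to W$ in $\SCH{X}$, and the resulting outer square commutes in $\SCH{X}$. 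Since its underlying square in $\SCH{S}$ is cocartesian, $(\mathrm{H}^P_1)$ upgrades it to a cocartesian square in $\SCH{X}$, completing the proof.

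The main obstacle will be the $2$-categorical bookkeeping in the second step: one must verify that the $X$-scheme structure placed on $V'$ via the canonical clivage, together with the resulting $2$-cells, is compatible with the given $X$-scheme structures on $V$ and $T'$ and yields a strictly commutative square in the $1$-category $\SCH{X}$. This is ultimately formal, reducing to the uniqueness of the factorisation $V' \to W$ in $\SCH{S}$ together with the defining properties of the canonical clivage and of morphisms of $X$-schemes.
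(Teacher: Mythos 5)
Your proposal is correct and follows essentially the same route as the paper's proof: construct the pushout in $\SCH{S}$ via Proposition~\ref{prop:schlessinger_pushouts_new} (the Ferrand-type result of Appendix~\ref{app:schlessinger_pushouts}), use $(\mathrm{H}^P_2)$ to produce a completion $W$ in $\SCH{X}$, transfer the $X$-structure to $V'$ via the unique $S$-morphism $V'\to W$, and conclude with $(\mathrm{H}^P_1)$.
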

\begin{proof}
  By Proposition \ref{prop:schlessinger_pushouts_new} (or \cite[Thm.\
  7.1]{MR2044495}) there is a cocartesian diagram in the
  category of $S$-schemes:
  \[
  \xymatrix@-0.8pc{T \ar@{^(->}[r]^{i} \ar[d]_p & T'  \ar[d]^{p'} \\ V
      \ar@{^(->}[r]^{{i}'}  & V'. }
  \]
  The morphism ${i}'$ is a locally nilpotent closed
  immersion, $p'$ is affine, and the induced homomorphism of sheaves
  $\Orb_{V'} \to {i}'_* \Orb_V \times_{p'_*{i}_*\Orb_T}
  p'_*\Orb_{T'}$ is an isomorphism. By Condition  
  $(\mathrm{H}^P_2)$ for $X$, there is thus a commutative diagram of
  $X$-schemes: 
  \[
  \xymatrix@-0.8pc{T \ar@{^(->}[r]^{i} \ar[d]_p & T'  \ar[d] \\ V
      \ar[r]  & W. }
  \]
  Taking the image of this diagram in the category of $S$-schemes, the
  universal property of the colimit $V'$ in the category of
  $S$-schemes produces a unique $S$-morphism $V'\to W$ which makes
  everything commute, giving $V'$ the structure of an
  $X$-scheme. The $S$-morphisms ${i}'$ and $p'$ are promoted to 
  $X$-morphisms, and our original diagram becomes a commutative diagram
  in the category of $X$-schemes. Condition $(\mathrm{H}_1^P)$
  now implies that it is cocartesian in the category of $X$-schemes.  
\end{proof}
The following definition is a convenient computational tool. A
$1$-morphism of $S$-groupoids $\Phi : Y\to Z$ is \fndefn{formally 
  \'etale} if for any $Z$-scheme $T'$ and any locally nilpotent closed immersion of
$Z$-schemes $T \hookrightarrow T'$, then any $Y$-scheme structure on
$T$ which is compatible with its $Z$-scheme structure under $\Phi$,
lifts uniquely to a compatible $Y$-scheme structure on $T'$. That is,
there is always a unique solution to the following lifting problem: 
\[
\xymatrix@-0.8pc{T \ar@{_(->}[d] \ar[r] & Y \ar[d]^{\Phi} \\ T'
  \ar@{-->}[ur]^{\exists\,!}\ar[r] & Z.} 
\]
\begin{lem}\label{lem:homog_prop}
Fix a scheme $S$, a $1$-morphism of $S$-groupoids $\Phi : Y \to Z$,
and a class $P \subset \HA$ of morphisms of $S$-schemes.
\begin{enumerate}
  \item\label{lem:homog_prop:item:comp} If $\Phi$ is
    $P$-homogeneous, then for any other $P$-homogeneous $1$-morphism
    $W \to Y$, the composition $W \to Z$ is $P$-homogeneous.
  \item\label{lem:homog_prop:item:univ} If $Z$ is
    $P$-homogeneous, then a cocartesian diagram of $Y$-schemes:  
    \[ 
    \xymatrix@-0.8pc{T \ar@{^(->}[r]^{i} \ar[d]_p & T'  \ar[d]^{p'} \\ V
      \ar[r]^-{{i}'}  & V', }
    \]
    where ${i}$ is a locally nilpotent closed
    immersion and $p$ is $P$, is also cocartesian in the category of
    $Z$-schemes.  
  \item\label{lem:homog_prop:item:rel_abs} If $Z$ is $P$-homogeneous,
    then the $1$-morphism $\Phi$ is
    $P$-homogeneous if and only if for any $Z$-scheme
    $T$, the $T$-groupoid $Y\times_Z (\SCH{T})$ is $P$-homogeneous.
  \item\label{lem:homog_prop:item:bc} If $Z$ and $\Phi$ are
    $P$-homogeneous, then for any $P$-homogeneous $1$-morphism of
    $S$-groupoids $\Psi : W \to Z$, the $1$-morphism $Y\times_Z W \to
    W$ is $P$-homogeneous.  
  \item\label{lem:homog_prop:item:diag} If $Z$ and $\Phi$ are
    $P$-homogeneous, then the diagonal $1$-morphism $\Delta_\Phi : Y \to
    Y\times_Z Y$ is $P$-homogeneous.
  \item\label{lem:homog_prop:item:fet} If $Z$ is $P$-homogeneous and
    $\Phi$ is  formally \'etale, then $\Phi$ is $P$-homogeneous. 
\end{enumerate}
\end{lem}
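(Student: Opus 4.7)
The plan is to verify items (\ref{lem:homog_prop:item:comp})--(\ref{lem:homog_prop:item:fet}) in an order that lets each one reuse the earlier results: (\ref{lem:homog_prop:item:comp}) and (\ref{lem:homog_prop:item:fet}) first as essentially formal diagram chases, then (\ref{lem:homog_prop:item:bc}) and (\ref{lem:homog_prop:item:univ}) via Lemma \ref{lem:homog_pushouts_int}, and finally (\ref{lem:homog_prop:item:rel_abs}) and (\ref{lem:homog_prop:item:diag}) as consequences. In every case the task reduces to verifying $(\mathrm{H}^P_1)$ and $(\mathrm{H}^P_2)$ on a diagram $[V \xleftarrow{p} T \hookrightarrow T']$ of the required shape, the principal geometric input being that, under $P$-homogeneity, the $S$-scheme pushout supplied by Appendix \ref{app:schlessinger_pushouts} computes the groupoid-scheme pushout.

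For (\ref{lem:homog_prop:item:comp}), I chain the biconditionals ``cocartesian in $\SCH{Z}$'' $\Leftrightarrow$ ``cocartesian in $\SCH{Y}$'' (by $(\mathrm{H}^P_1)$ for $\Phi$) $\Leftrightarrow$ ``cocartesian in $\SCH{W}$'' (by $(\mathrm{H}^P_1)$ for $W \to Y$), and handle $(\mathrm{H}^P_2)$ analogously. Item (\ref{lem:homog_prop:item:fet}) invokes the unique lifts characterizing formal étaleness to produce the unique $Y$-scheme structure on any $Z$-scheme pushout of the given shape. For (\ref{lem:homog_prop:item:bc}), a $(Y \times_Z W)$-scheme is a compatible pair of a $Y$-structure and a $W$-structure on a common $S$-scheme; Lemma \ref{lem:homog_pushouts_int} applied separately to $\Phi$ and $\Psi$ over $Z$ then supplies the required $Y$- and $W$-structures on the common $S$-pushout, with their $Z$-compatibility forced by the uniqueness of the induced $Z$-structure on that pushout. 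For (\ref{lem:homog_prop:item:univ}), $P$-homogeneity of $Z$ combined with Lemma \ref{lem:homog_pushouts_int} produces an independently constructed $\SCH{Z}$-pushout $V'_Z$ whose underlying $S$-scheme is the Ferrand pushout; I compare $V'_Z$ with the given $\SCH{Y}$-pushout $V'_Y$ via the universal properties on both sides, identifying the canonical $\SCH{Z}$-map $V'_Z \to V'_Y$ as an isomorphism.

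For (\ref{lem:homog_prop:item:rel_abs}), the ``only if'' direction follows from (\ref{lem:homog_prop:item:bc}) applied to $\Psi = \sigma_T : \SCH{T} \to Z$, after noting via (\ref{lem:homog_prop:item:comp}) that this $\Psi$ is $P$-homogeneous; the ``if'' direction follows by noting that any diagram of $Y$-schemes cocartesian in $\SCH{Z}$ with $\SCH{Z}$-pushout $V'_Z$ lies inside the $P$-homogeneous $V'_Z$-groupoid $Y \times_Z \SCH{V'_Z}$, which supplies the required $Y$-scheme structure on $V'_Z$. Item (\ref{lem:homog_prop:item:diag}) then follows from (\ref{lem:homog_prop:item:rel_abs}): the fiber of $\Delta_\Phi$ over a $(Y \times_Z Y)$-scheme is the $\Isom$-groupoid between the two $Y$-scheme structures it carries, whose $P$-homogeneity follows because the universal property of the pushout in $\SCH{Y}$ forces isomorphisms of $Y$-structures to glue along the required pushout shape.

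The hardest step is (\ref{lem:homog_prop:item:univ}). The other items either chain $P$-homogeneity hypotheses across a single morphism or reduce to universal constructions already handled; in contrast, (\ref{lem:homog_prop:item:univ}) requires comparing two independently constructed pushouts---the given $V'_Y$ in $\SCH{Y}$ and the one built in $\SCH{Z}$ from the $S$-scheme Ferrand pushout---and showing they agree as $Z$-schemes. The comparison pivots on the uniqueness clauses in the universal properties of both pushouts, together with the canonical $S$-scheme map between them provided by Appendix \ref{app:schlessinger_pushouts}.
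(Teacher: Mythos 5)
Your treatment of part (\ref{lem:homog_prop:item:univ}) --- building the pushout in $\SCH{Z}$ independently via Lemma \ref{lem:homog_pushouts_int}, transporting the $Y$-scheme structure of the given pushout onto it along the canonical comparison map, and using the universal properties on both sides to exhibit mutually inverse morphisms --- is exactly the paper's proof, and since the paper dismisses the remaining parts as ``straightforward,'' your sketches of (\ref{lem:homog_prop:item:comp}), (\ref{lem:homog_prop:item:rel_abs})--(\ref{lem:homog_prop:item:fet}) simply fill in what it omits, in a consistent way. One small misattribution: in (\ref{lem:homog_prop:item:rel_abs}) the $P$-homogeneity of $\sigma_T : \SCH{T} \to Z$ is not an instance of (\ref{lem:homog_prop:item:comp}) (which composes two relatively homogeneous morphisms); it instead follows from the easy observation that Ferrand pushouts are genuine colimits in $\SCH{S}$, hence in the slice $\SCH{T}$, combined with $(\mathrm{H}^P_1)$ for $Z$.
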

\begin{proof}
  For \itemref{lem:homog_prop:item:univ}, by Lemma
  \ref{lem:homog_pushouts_int} the diagram of 
  $Y$-schemes $[V \xleftarrow{p} T\xrightarrow{{i}} T']$ fits into
  a cocartesian diagram of $Z$-schemes:
  \[
  \xymatrix@-0.8pc{T \ar@{^(->}[r]^{{i}} \ar[d]_p & T' \ar[d] \\ V
    \ar[r] & \widetilde{V}.}
  \]
  The universal property defining this square gives a unique map of
  $Z$-schemes $\widetilde{V} \to V'$. Since $V'$ is a $Y$-scheme,
  $\widetilde{V}$ becomes a $Y$-scheme, and the diagram above is promoted 
  to a commutative diagram of $Y$-schemes. We now apply the universal
  property defining $V'$ and obtain a unique morphism of
  $Y$-schemes $V' \to \widetilde{V}$. The morphisms of $Y$-schemes
  $V'\rightleftarrows \widetilde{V}$ are readily seen to be mutually
  inverse. The remainder of the claims are straightforward.
\end{proof}
\section{Extensions}\label{sec:extensions}
The results of this section are well-known to
experts, and similar to those obtained by H. Flenner \cite{MR638811}
and J. Wise \cite[\S2.3]{2011arXiv1111.4200W}.

Fix a scheme $S$ and an $S$-groupoid $X$. An $X$-\fndefn{extension} is  
a square zero closed immersion of $X$-schemes ${i} : T
\hookrightarrow T'$. An obligatory observation
is that the ${i}^{-1}\Orb_{T'}$-module $\ker ({i}^{-1}
\Orb_{T'} \to \Orb_T)$ is canonically a quasicoherent 
$\Orb_T$-module. If the $X$-scheme $T$ is affine, so is 
the scheme $T'$ \cite[{I}.5.1.9]{EGA}. A morphism of
$X$-extensions $({i}_1 : T_1 \hookrightarrow T_1' )  \to 
({i}_2 : T_2 \hookrightarrow T_2')$ is a commutative diagram of
$X$-schemes: 
\[
\xymatrix@R-0.8pc{T_1 \ar@{^(->}[r]^{{i}_1}\ar[d] & \ar[d] T_1' \\ T_2
  \ar@{^(->}[r]^{{i}_2} & T_2'.} 
\]
In a natural way, the collection of $X$-extensions forms a category,
which we denote as $\EXAL_X$. There is a natural functor $\EXAL_X \to
\SCH{X} : ({i} : T\hookrightarrow T') \to T$. 

We denote by $\EXAL_X(T)$ the fiber
of the category $\EXAL_X$ over the $X$-scheme $T$. An
\fndefn{$X$-extension of $T$} is an object of $\EXAL_X(T)$. There is a
natural functor:  
\[
\EXAL_X(T)^\opp \to \QCOH{T} : ({i} : T \hookrightarrow T')
\mapsto \ker({i}^{-1}\Orb_{T'} \to \Orb_T).
\]
We denote by $\EXAL_X(T,{I})$ the fiber category of $\EXAL_X(T)$ over 
the quasicoherent $\Orb_T$-module ${I}$. An \fndefn{$X$-extension of $T$ by
$I$} is an object of $\EXAL_X(T,I)$. A morphism $(T 
\hookrightarrow T'_1) \to (T \hookrightarrow T'_2)$ in $\EXAL_X(T,I)$
induces a commutative diagram of sheaves of rings on the
topological space $|T|$:
\[
\xymatrix@-1pc{0 \ar[r] & I \ar@{=}[d]\ar[r] & \ar[d] \Orb_{T'_2} \ar[r] &
  \Orb_T \ar[r] \ar@{=}[d] &  0\\
0 \ar[r] & I \ar[r] & \Orb_{T'_1} \ar[r] & \Orb_T \ar[r] & 0.}
\]
The Snake Lemma implies that the morphism of $S$-schemes $T_1' \to
T_2'$ is an isomophism, thus the category $\EXAL_X(T,I)$ is a 
groupoid. The following is a triviality that we record here for future
reference. 
\begin{lem}\label{lem:exal_fet}
  Fix a scheme $S$, a formally \'etale $1$-morphism of $S$-groupoids
  $X \to Y$, an $X$-scheme $T$, and a quasicoherent $\Orb_T$-module
  $I$. Then, the natural functor:
  \[
  \EXAL_X(T,I) \to \EXAL_Y(T,I)
  \]
  is an equivalence of categories. 
\end{lem}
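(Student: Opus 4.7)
The plan is to verify that $F : \EXAL_X(T,I) \to \EXAL_Y(T,I)$ is essentially surjective and fully faithful, in both cases by a direct application of the formal étaleness of $\Phi : X \to Y$ to the square-zero (hence locally nilpotent) closed immersion underlying any object of $\EXAL_X(T,I)$ or $\EXAL_Y(T,I)$.

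For essential surjectivity, I would take a $Y$-extension $i : T \hookrightarrow T'$ of $T$ by $I$. The scheme $T'$ carries a $Y$-scheme structure, while $T$ carries compatible $X$- and $Y$-scheme structures, the latter being the one induced from $T'$ along $i$. Applying formal étaleness to the locally nilpotent closed immersion $i$ of $Y$-schemes, the $X$-scheme structure on $T$ lifts uniquely to an $X$-scheme structure on $T'$ compatible with the given $Y$-scheme structure on $T'$. This promotes the $Y$-extension to an object of $\EXAL_X(T,I)$ whose image under $F$ is canonically identified with the original.

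For full faithfulness, I would fix $X$-extensions $\eta_k = (T \hookrightarrow T_k')$ for $k = 1,2$ together with a morphism $\phi : F(\eta_1) \to F(\eta_2)$ in $\EXAL_Y(T,I)$. By the snake-lemma observation recorded in the paragraph just before the lemma, $\phi$ is an isomorphism and its underlying $S$-morphism $f : T_1' \to T_2'$ is uniquely determined; the only real question is whether the $Y$-$2$-morphism piece $\beta_f$ of $\phi$ lifts uniquely to an $X$-$2$-morphism $\alpha_f$ realising $f$ as a morphism of $X$-extensions. To see this, I would compare on $T_1'$ the two $X$-scheme structures obtained respectively from $\eta_1$ and by pulling back the $X$-scheme structure of $T_2'$ along $f$, reparametrising the latter along $\beta_f$ so that both structures live over a common $Y$-scheme structure on $T_1'$. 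Their restrictions to $T$ both coincide, via a canonical $2$-isomorphism built from the structural data of $\eta_1$, $\eta_2$, and $\phi$, with the given $X$-scheme structure on $T$. Formal étaleness applied to the locally nilpotent closed immersion $T \hookrightarrow T_1'$ then yields a unique $X$-$2$-morphism between the two $X$-scheme structures on $T_1'$ extending that comparison on $T$, and this $2$-morphism is the desired $\alpha_f$.

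The main obstacle is the $2$-categorical bookkeeping in the full-faithfulness step: one must carefully track how the structural $2$-morphisms of $\eta_1$, $\eta_2$, and $\phi$ combine to bring the two competing $X$-scheme structures on $T_1'$ over a single $Y$-scheme structure with a single restriction to $T$, so that the uniqueness clause in the definition of formal étaleness actually applies and produces precisely the $\alpha_f$ mapping to $\beta_f$ under $\Phi$. Once this compatibility is unwound, essential surjectivity and full faithfulness together give the equivalence of categories.
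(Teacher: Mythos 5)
Your argument is correct and is exactly the intended one: the paper records this lemma as a triviality with no written proof, and your two steps (essential surjectivity by lifting the $X$-structure on $T$ along the square-zero immersion, full faithfulness by applying the uniqueness clause of formal \'etaleness to the two competing $X$-scheme structures on $T_1'$) supply precisely the missing details. The $2$-categorical bookkeeping you flag is the only content, and you have handled it correctly.
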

Fix a scheme $W$ and a quasicoherent $\Orb_W$-module $J$. Then, the
quasicoherent $\Orb_W$-module $\Orb_W\oplus J$ is readily seen to be a
ring: for an open subset $U \subset |W|$ and $(w,j)$, $(w',j') \in
\Gamma(U,\Orb_W)$ we set 
\[
(w,j)\cdot (w',j') = (ww',wj' + w'j).
\]
Moreover, via the natural map $\Orb_W \to \Orb_W \oplus J : w \mapsto
(w,0)$, we see that the ring $\Orb_W \oplus J$ admits a canonical
structure as an $\Orb_W$-algebra, which we denote as
$\Orb_W\extn{J}$. We now set $W\extn{J}$ to be the $W$-scheme
$\underline{\spec}_W (\Orb_W \extn{J})$. Corresponding to the natural
surjection of $\Orb_W$-algebras $\Orb_W\extn{J} \to \Orb_W$, we obtain
a canonical $W$-extension of $W$ by $J$, which we denote as
$(\trvext{W}{J}: W\hookrightarrow W\extn{J})$, and call the
\fndefn{trivial} $W$-extension of $W$ by $J$. In particular, the
structure morphism $\trvret{W}{J} : W\extn{J} \to W$ is a retraction
of the morphism $\trvext{W}{J} : W \to W\extn{J}$.

For a morphism of $X$-schemes $q : U \to V$, denote by $\Ret_X(U/V)$ the
set of $X$-retractions to the morphism $q : U \to V$. That is,
\[
\Ret_X(U/V) = \{ r\in \Hom_{\SCH{X}}(V,U)\suchthat rq = \ID{U}\}.
\]
\begin{lem}\label{lem:exal_secs}
  Fix a scheme $S$, an $S$-groupoid $X$, an $X$-scheme
  $T$, a quasicoherent $T$-module $I$, and an $X$-extension
  $({i}: T\hookrightarrow T')$ of $T$ by $I$. Then, there is a
  natural bijection:
  \[
  \Hom_{\EXAL_X(T,I)}(({i} : T\hookrightarrow  T') , (\trvext{T}{I} : T
  \hookrightarrow T\extn{I})) \to \Ret_X(T/T').  
  \]
\end{lem}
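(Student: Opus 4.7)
The plan is to exhibit mutually inverse maps between the two sets, with the principal work being the translation between a retraction of schemes and a ring-theoretic section of the extension, together with the verification that the constructions respect the $X$-structure.

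In the forward direction, given a morphism $\phi$ in $\EXAL_X(T,I)$ from $(i:T\hookrightarrow T')$ to $(\trvext{T}{I}: T \hookrightarrow T\extn{I})$, I define $F(\phi) := \trvret{T}{I} \circ \phi$. Being a morphism in $\EXAL_X(T,I)$, $\phi$ satisfies $\phi \circ i = \trvext{T}{I}$ as $X$-morphisms, so $F(\phi) \circ i = \trvret{T}{I} \circ \trvext{T}{I} = \ID{T}$, and $F(\phi)$ is indeed an $X$-retraction of $i$.

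For the inverse $G$, given a retraction $r \in \Ret_X(T/T')$, I construct $G(r): T' \to T\extn{I}$ on the level of sheaves of rings on the common underlying space $|T|=|T'|$. The extension data is a short exact sequence $0 \to I \to i^{-1}\Orb_{T'} \to \Orb_T \to 0$, and $r$ supplies a section $r^{\sharp}: \Orb_T \to i^{-1}\Orb_{T'}$ of the surjection. Define a ring map $\Orb_T\extn{I} = \Orb_T \oplus I \to i^{-1}\Orb_{T'}$ by $(a, j) \mapsto r^{\sharp}(a) + j$, where $j$ is identified with its image in $i^{-1}\Orb_{T'}$. This is a ring homomorphism because the square-zero hypothesis forces $I^2 = 0$ in $i^{-1}\Orb_{T'}$, so the cross-term in $(a_1,j_1)\cdot(a_2,j_2)$ matches. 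The corresponding morphism of ringed spaces provides the underlying $S$-morphism $G(r)$.

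To upgrade $G(r)$ to a morphism of $X$-schemes, I invoke the canonical clivage of $\SCH{X}$. The $X$-scheme structure on $T\extn{I}$ is, by its very construction, the composition $\SCH{T\extn{I}} \xrightarrow{\SCH{\trvret{T}{I}}} \SCH{T} \xrightarrow{\sigma_T} X$. By construction $\trvret{T}{I} \circ G(r) = r$ as $S$-morphisms, so the $X$-structure of $r$ (a $2$-isomorphism $\sigma_{T'} \Rightarrow \sigma_T \circ \SCH{r}$) is precisely the $2$-isomorphism required to promote $G(r)$ to an $X$-morphism. The relations $G(r) \circ i = \trvext{T}{I}$ and the induced identity on $I$ are immediate from the defining formula, so $G(r)$ is a morphism in $\EXAL_X(T,I)$.

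Finally, I check inverses. One has $F(G(r)) = \trvret{T}{I} \circ G(r) = r$ directly. Conversely, any morphism $\phi$ in $\EXAL_X(T,I)$ with target the trivial extension corresponds at the level of sheaves of rings to a map $\Orb_T \oplus I \to i^{-1}\Orb_{T'}$ whose composition with the projection to $\Orb_T$ equals $(\trvret{T}{I}\circ \phi)^{\sharp} = F(\phi)^{\sharp}$ and whose restriction to $I$ is the identity; these two constraints force the map to agree with that of $G(F(\phi))$. The main thing to be careful about is threading the square-zero condition through the ring-homomorphism check and correctly exploiting the canonical clivage so that all $X$-compatibilities are automatic rather than a separate calculation.
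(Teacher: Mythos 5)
Your proposal is correct and follows the same route as the paper: the forward map is exactly the paper's (compose a morphism of extensions with $\trvret{T}{I}$), and the paper simply asserts that this assignment is bijective, whereas you supply the explicit ring-theoretic inverse $(a,j)\mapsto r^{\sharp}(a)+j$ and the clivage bookkeeping. Nothing is missing; you have merely written out the verification the paper leaves to the reader.
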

\begin{proof}
  For a morphism of $X$-extensions $(T \hookrightarrow T') \to
  (T\hookrightarrow T\extn{I})$, the composition $T'
  \to T\extn{I} \xrightarrow{\trvret{T}{I}} T$ defines an
  $X$-retraction to ${i}$. This assignment is bijective.   
\end{proof}
Assuming some homogeneity really gets us something.
\begin{prop}\label{prop:exalmod}
  Fix a scheme $S$, an $S$-groupoid $X$, and an
  $X$-scheme $T$, then the functor $\EXAL_X(T) \to \QCOH{T}^\opp$ is
  a fibration in groupoids. If the $S$-groupoid $X$ is
  $\HNIL$-homogeneous, then $\forall I \in \QCOH{T}$, $\EXAL_X(T,I)$
  is a Picard category.   
\end{prop}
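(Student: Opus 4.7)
The plan is to establish the two assertions in sequence: first, that $\EXAL_X(T) \to \QCOH{T}^\opp$ is a fibration in groupoids without any homogeneity hypothesis; second, that $\HNIL$-homogeneity upgrades each fiber $\EXAL_X(T,I)$ to a Picard category via the Baer sum.

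For the first assertion, I would construct cartesian lifts by a pushout of extensions at the level of sheaves of rings. Given an $X$-extension $({i}:T \hookrightarrow T'_2)$ with ideal $I_2$ and a morphism $\phi:I_2 \to I_1$ in $\QCOH{T}$ (equivalently, an arrow $I_1 \to I_2$ in $\QCOH{T}^\opp$), set
\[
\Orb_{T'_1} := (\Orb_{T'_2}\oplus I_1)\big/\{(x,-\phi(x)) \suchthat x\in I_2\},
\]
with multiplication $(a,b)\cdot(a',b')=(aa',\bar{a}b'+\overline{a'}b)$. A routine check using $I_2^2=0$ shows that this is a well-defined $\Orb_T$-algebra sheaf fitting into a short exact sequence $0\to I_1 \to \Orb_{T'_1}\to \Orb_T \to 0$, and the ring map $\Orb_{T'_2}\to\Orb_{T'_1}$, $a\mapsto [a,0]$, produces an $S$-morphism $T'_1\to T'_2$ inducing $\phi$ on ideals. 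Pulling the $X$-scheme structure on $T'_2$ back along this $S$-morphism turns $T'_1$ into an $X$-scheme compatibly with $T\hookrightarrow T'_1$, and the universal property of the pushout of abelian extensions shows the resulting arrow is cartesian. Since the Snake Lemma argument recalled before the statement forces morphisms in each $\EXAL_X(T,I)$ to be isomorphisms, every arrow in $\EXAL_X(T)$ is cartesian, completing the fibration part.

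For the second assertion, I would construct the Baer sum by combining $\HNIL$-homogeneity with the cartesian lifts above. Given $T'_1,T'_2 \in \EXAL_X(T,I)$, the diagram of $X$-schemes $[T'_1 \xleftarrow{i_1} T \xrightarrow{i_2} T'_2]$ has both legs locally nilpotent closed immersions, so Lemma~\ref{lem:homog_pushouts_int} produces a pushout $T'_{12}:=T'_1\sqcup_T T'_2$ in $\SCH{X}$ with structure sheaf $\Orb_{T'_1}\times_{\Orb_T}\Orb_{T'_2}$, exhibiting $T'_{12}$ as an $X$-extension of $T$ by $I\oplus I$. Applying the cartesian lift of the first part to the summation $\sigma:I\oplus I\to I$ then produces an $X$-extension of $T$ by $I$, which I define to be $T'_1 + T'_2$. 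The unit is the trivial extension $(\trvext{T}{I}:T \hookrightarrow T\extn{I})$, and the inverse of $T'$ is its cartesian lift along $-\ID{I}$.

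The main obstacle is formal: verifying associativity, symmetry, the unit, and invertibility for this Baer sum. Each axiom reduces to an identity of morphisms in $\QCOH{T}$ together with the uniqueness of cartesian lifts in the fibration and the universal property of the $\HNIL$-pushout in $\SCH{X}$. For the unit axiom in particular, one uses that $\trvret{T}{I}$ splits $\Orb_{T\extn{I}}\to\Orb_T$ to build the isomorphism $T' + T\extn{I} \cong T'$ explicitly. Once the two core constructions are in place these coherence checks are standard, and I would carry them out uniformly rather than one by one.
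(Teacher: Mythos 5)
Your proposal is correct and matches the paper's proof in its essential constructions: the cartesian lift is built by the same pushout $\Orb_{T'_2}\oplus_{I_2} I_1$ of structure sheaves, and the addition on $\EXAL_X(T,I)$ comes from the same $\HNIL$-pushout of Lemma~\ref{lem:homog_pushouts_int}, producing an $X$-extension by $I\oplus I$. The only (cosmetic) divergence is at the end: the paper invokes Grothendieck's criterion that it suffices to show $\pi_{M,N}:\EXAL_X(T,M\times N)\to\EXAL_X(T,M)\times\EXAL_X(T,N)$ is an equivalence, whereas you spell out the resulting Baer sum explicitly and defer the coherence axioms.
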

\begin{proof}
  Fix a morphism $\alpha^\opp : J \to I$ in $\QCOH{T}^\opp$. This
  corresponds to a morphism of quasicoherent $\Orb_T$-modules $\alpha
  : I \to J$. Also, fix an $X$-extension  
  $({i} : T \hookrightarrow T_I')$ of $T$ by $I$. On the
  topological space $|T|$ we obtain a commutative diagram of sheaves
  of abelian groups with exact rows:   
  \[
  \xymatrix@-1pc{0 \ar[r] & I \ar[d]_\alpha\ar[r] &
    \ar[d]^{\widetilde{\alpha}} \Orb_{T'_I} \ar[r] & 
    \Orb_T \ar[r] \ar@{=}[d] &  0\\
    0 \ar[r] & J \ar[r] & \Orb_{T'_I} \oplus_I J \ar[r] & \Orb_T \ar[r] & 0,}
  \]
  where 
  \[
  \Orb_{T'_I} \oplus_I J = \coker(I\xrightarrow{i\mapsto (i,-\alpha(i))}
  \Orb_{T'_I} \oplus J). 
  \]
  It is easily verified that the sheaf of abelian groups
  $\Orb_{T'_J} := \Orb_{T'_I}\oplus_I J$ is a sheaf of rings and that the
  homomorphism $\widetilde{\alpha}$ is a ring homomorphism.  
  The subsheaf $J \subset \Orb_{T'_J}$ defines a square zero
  sheaf of ideals and as $J$ is $\Orb_T$-quasicoherent, one
  immediately concludes that the ringed space $(|T|,\Orb_{T'_J})$ is an
  $S$-scheme, $T'_J$, and that we have defined an $S$-extension
  $({i}_\alpha : T \hookrightarrow T'_J)$ of $T$ by $J$. 
  The morphism of $S$-schemes $T'_J \to T'_I$ promotes the
  $S$-extension ${i}_\alpha$ to an 
  $X$-extension of $T$ by $J$. It is immediate that the resulting
  arrow ${i}_\alpha \to {i}$ in $\EXAL_X(T)$ is strongly
  cartesian over the arrow $\alpha^\opp : J \to I$ in $\QCOH{T}^\opp$,
  and we deduce the first claim. 

  For the second claim, the fibration $\EXAL_X(T) \to \QCOH{T}^\opp$
  induces for $M$, $N\in \QCOH{T}$, a functor:
  \[
  \pi_{M,N} : \EXAL_X(T,M\times N) \to \EXAL_X(T,M) \times
  \EXAL_X(T,N).
  \]
  Note that this functor is not unique, but for any other choice of
  such a functor $\pi'_{M,N}$, there is a unique natural
  isomorphism of functors $\pi_{M,N} \Rightarrow \pi'_{M,N}$. This
  renders the Picard category structure on $\EXAL_X(T,I)$ as
  essentially unique (on the level of isomorphism classes of objects,
  the abelian group structure is unique). 
  
  By \cite[\S1.2]{MR0241495} it is sufficient to
  show that the functor $\pi_{M,N}$ is an equivalence, which we show
  using the arguments of \cite[${0}_{\text{IV}}$.18.3]{EGA}. For the 
  essential surjectivity, we fix $X$-extensions $({i}_M : T 
  \hookrightarrow T_M')$ and $({i}_N : T
  \hookrightarrow T_N')$ of $T$ by $M$ and $N$ respectively. Since
  $X$ is $\HNIL$-homogeneous, by Lemma \ref{lem:homog_pushouts_int},
  there is a cocartesian diagram in the category of $X$-schemes:
  \[
  \xymatrix@-0.8pc{T \ar@{^(->}[r]^{{i}_M} \ar@{_(->}[d]_{{i}_N}
    & T_M'  \ar[d]\\ T_N' \ar[r] & T'.} 
  \]
  The resulting closed immersion ${i} : T \hookrightarrow T'$
  defines an $X$-extension of $T$  by $M\times N$. Moreover, it is
  plain to see that $\pi_{M,N}({i}) \cong
  ({i}_M,{i}_N)$. The full faithfulness of the functor
  $\pi_{M,N}$ follows from a similar argument. 
\end{proof}
Denote the set of isomorphism classes of the category
$\EXAL_X(T,I)$ by $\Exal_X(T,I)$. By Proposition \ref{prop:exalmod},
if $X$ is $\HNIL$-homogeneous, there are additive functors: 
\begin{align*}
  \Der_X(T,-) &: \QCOH{T} \to \AB : I \mapsto
  \Aut_{\EXAL_X(T,I)}(\trvext{T}{I})\\
  \Exal_X(T,-) &: \QCOH{T} \to \AB : I \mapsto \Exal_X(T,I). 
\end{align*}
We note that the $0$-object of the abelian group $\Der_X(T,I)$
corresponds to the identity automorphism, and the $0$-object of the
group $\Exal_X(T,I)$ corresponds to the isomorphism class containing
the $X$-extension $(\trvext{T}{I} : T \hookrightarrow
T\extn{I})$. Increasing the homogeneity, more structure is obtained.
\begin{cor}\label{cor:6term}
  Fix a scheme $S$, a $\HrNIL$-homogeneous $S$-groupoid $X$, and an $X$-scheme
  $T$. Then, for each short exact sequence of quasicoherent
  $\Orb_T$-modules: 
  \[
  \xymatrix{0 \ar[r] & K \ar[r] & M \ar[r] & C \ar[r] & 0}
  \]
  there is a natural $6$-term exact sequence of abelian groups:
  \[
  \xymatrix{0 \ar[r] & \Der_{X}(T,K) \ar[r] &
    \Der_{X}(T,M) \ar[r] &
    \Der_{X}(T,C) \ar `[r] `[l] `[dlll]_\partial `[d] [dll] &  &\\
    & \Exal_{X}(T,K) \ar[r] & \Exal_{X}(T,M)
    \ar[r] & \Exal_{X}(T,C).}
  \]
\end{cor}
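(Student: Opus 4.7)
The strategy is to identify $\EXAL_X(T,K)$ with the fibre of $p_* : \EXAL_X(T,M) \to \EXAL_X(T,C)$ over $\trvext{T}{C}$ as Picard categories, and then deduce the six-term sequence from the standard long exact sequence of a fibre sequence of Picard categories. The four ``horizontal'' maps in the sequence come from applying the additive functors $\Der_X(T,-)$ and $\Exal_X(T,-)$ (defined via Proposition \ref{prop:exalmod} using $\HNIL$-homogeneity) to $0 \to K \to M \to C \to 0$; vanishing of both compositions $p_* \circ i_*$ follows from additivity and $p \circ i = 0$.

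The construction of $\partial$ is the main point, and it relies on $\HrNIL$-homogeneity. Let $F$ be the fibre groupoid whose objects are pairs $(E,\beta)$ with $E \in \EXAL_X(T,M)$ and $\beta : p_*E \xrightarrow{\sim} \trvext{T}{C}$ in $\EXAL_X(T,C)$; this is a Picard category inheriting its structure from $\EXAL_X(T,M)$. There is a natural Picard functor $\Phi : \EXAL_X(T,K) \to F$, $a \mapsto (i_*a, \mathrm{can})$, where $\mathrm{can}$ is the canonical trivialization arising from $p \circ i = 0$.

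To show $\Phi$ is an equivalence, I construct an explicit quasi-inverse. Given $(E,\beta) \in F$, the map $r := \trvret{T}{C} \circ \beta : p_*E \to T$ is an $X$-retraction of $T \hookrightarrow p_*E$ by Lemma \ref{lem:exal_secs}. Since $p : M \twoheadrightarrow C$ is surjective, the construction of Proposition \ref{prop:exalmod} furnishes a canonical closed immersion $p_*E \hookrightarrow E$ with square-zero ideal $K$, which lies in $\HNIL$. Moreover $r$ lies in $\HrNIL$, since precomposing $r$ with the nilpotent immersion $T \hookrightarrow p_*E$ yields $\ID{T}$. Thus Lemma \ref{lem:homog_pushouts_int} produces the pushout
\[
a_{(E,\beta)} := T \sqcup_{p_*E}^X E
\]
as an $X$-scheme. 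A direct ring-level computation presents $\Orb_{a_{(E,\beta)}}$ as a square-zero extension of $\Orb_T$ with kernel $K$, so $a_{(E,\beta)} \in \EXAL_X(T,K)$. The equivalence of $\Phi$ then follows from the universal property of the pushout, which also provides the natural isomorphism $i_* a_{(E,\beta)} \cong E$ compatible with the trivializations. Transporting the six-term exact sequence of the fibre sequence $F \to \EXAL_X(T,M) \to \EXAL_X(T,C)$ across $\Phi$ produces the desired sequence, with connecting map $\partial(\phi) = [\trvext{T}{M} \sqcup^X_{\trvext{T}{C},\phi} T]$.

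The hardest point is the existence of the pushout, which is precisely where $\HrNIL$-homogeneity (rather than merely $\HNIL$) is needed: the retraction $r$ is not a closed immersion, so one must allow the non-immersion leg of the pushout to be of $\HrNIL$-type. With this pushout in hand, the rest of the argument is essentially formal Picard-category bookkeeping.
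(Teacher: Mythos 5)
Your proof is correct, and its key step---pushing out the square-zero closed immersion $p_*E \hookrightarrow E$ (with ideal $K$) along the retraction $r \in \HrNIL$ via Lemma \ref{lem:homog_pushouts_int}---is exactly the construction in the paper's proof, down to the observation that $r$ lies in $\HrNIL$ because its precomposition with $T \hookrightarrow p_*E$ is the identity. The only difference is packaging: the paper runs this construction directly to verify exactness at $\Exal_X(T,M)$ alone (explicitly declining to prove the remaining terms, which it does not need), whereas you upgrade it to an equivalence of Picard categories $\EXAL_X(T,K) \simeq F$ and read off the full six-term sequence from the fibre-sequence formalism.
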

\begin{proof}
  This is actually a consequence of \cite[Prop.\
  2.3(iv)]{2011arXiv1111.4200W}, where it was shown that the fibered
  category $\EXAL_X(T) \to \QCOH{T}^\opp$ is additive and left-exact,
  in the sense of \cite{MR0241495}. We will not follow this route, but 
  instead utilize arguments similar to
  \cite[${0}_{\text{IV}}$.20.2.2-3]{EGA}.  We will also only  prove
  the exactness of the last three terms, since this is all that is
  necessary in this paper.  

  Given an $X$-extension
  $({i}_M : T \hookrightarrow T'_M)$ of $T$ by $M$, suppose that its
  image, $({i}_C : T\hookrightarrow T'_C)$, in $\Exal_X(T,C)$ is
  $0$. By Lemma \ref{lem:exal_secs}, this is equivalent to the
  existence of an $X$-retraction $r : T'_C \to T$ of the $X$-morphism
  ${i}_C$. Proposition \ref{prop:exalmod} implies that there is
  an induced $X$-morphism $T'_C \to T'_M$. Since the $\Orb_T$-module
  homomorphism $M \to C$ is 
  surjective with kernel $K$, it follows that the $X$-morphism $(T'_C
  \hookrightarrow T'_M)$ defines an $X$-extension of $T'_C$ by 
  $K$. Since $X$ is $\HrNIL$-homogeneous, Lemma
  \ref{lem:homog_pushouts_int} implies that there is a cocartesian
  diagram in the category $X$-schemes:
  \[
  \xymatrix@-0.8pc{T'_C \ar@{^(->}[r] \ar[d]_r & T'_M \ar[d] \\ T
    \ar@{^(->}[r]^{{i}} & T'.}
  \]
  Certainly, $({i} : T\hookrightarrow T')$ defines an $X$-extension
  of $T$ by $K$ and the image of the $X$-extension ${i}$ in
  $\Exal_X(T,M)$ is readily seen to be ${i}_M$. 
\end{proof}
Strengthening our homogeneity assumption again, we see more.
\begin{cor}\label{cor:et_loc_exal}
  Fix a scheme $S$, an $\HA$-homogeneous $S$-groupoid $X$, and an
  $X$-scheme $T$. For any \emph{affine} and \'etale morphism $p : U
  \to T$, and any quasicoherent  $\Orb_U$-module $J$, there is an
  equivalence of Picard categories:  
  \[
     \EXAL_X(U,J) \to \EXAL_X(T,p_*J).
   \]
\end{cor}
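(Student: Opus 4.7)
My plan is to identify the functor with pushout along $p$, exhibit a candidate quasi-inverse via étale lifting followed by pushforward along the counit $p^*p_*J \to J$, and verify these functors are mutually quasi-inverse by a local computation.

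The forward functor $F$ sends $(i\colon U \hookrightarrow U') \in \EXAL_X(U, J)$ to the pushout furnished by Lemma~\ref{lem:homog_pushouts_int}: since $p$ is affine (hence in $\HA$), $i$ is a locally nilpotent closed immersion, and $X$ is $\HA$-homogeneous, this yields a cocartesian square of $X$-schemes with $i'\colon T \hookrightarrow T'$ and $\Orb_{T'} \cong i'_*\Orb_T \times_{p'_*i_*\Orb_U} p'_*\Orb_{U'}$. The kernel of $\Orb_{T'} \to i'_*\Orb_T$ is then $p'_*i_*J \cong i'_*p_*J$ (using exactness of $p'_*$ on quasicoherent modules for $p'$ affine), so $F(U') \in \EXAL_X(T, p_*J)$; additivity is automatic from the universal property.

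For the quasi-inverse $G$, given $(i'\colon T \hookrightarrow T') \in \EXAL_X(T, p_*J)$, the equivalence between the étale sites of $T$ and $T'$ (valid since $i'$ is a locally nilpotent closed immersion) produces a unique affine étale morphism $p'\colon U' \to T'$ whose base change along $i'$ is $p$. The canonical clivage on $\SCH{X}$ makes $U'$ an $X$-scheme, and flat base change identifies the ideal of $U \hookrightarrow U'$ with $p^*p_*J$. Invoking Proposition~\ref{prop:exalmod}, I push $(U \hookrightarrow U')$ forward along the counit $\varepsilon_J\colon p^*p_*J \to J$ of the affine adjunction $p^*\dashv p_*$ to obtain $G(T') \in \EXAL_X(U, J)$.

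The main obstacle is verifying mutual quasi-inverseness. The question is local on $T$, so I reduce to $T=\spec A$, $U=\spec B$ with $A\to B$ étale affine. For a square-zero extension $0 \to N \to A' \to A \to 0$ (where $N$ is the $A$-module underlying $p_*J$), the étale lift $B'$ of $B$ to $A'$ has ideal $B \otimes_A N$ by flat base change, and $G(A')$ is the quotient $B'/K$ by $K=\ker(\varepsilon\colon B\otimes_A N\to N)$, which is an ideal of $B'$ because $\varepsilon$ is $B$-linear and $N^2=0$. Both compositions $F\circ G$ and $G\circ F$ then produce square-zero extensions of the same object by the same module, equipped with a natural ring homomorphism from the original extension that is the identity on kernel and on quotient; the five lemma delivers the required natural isomorphisms, concluding the proof.
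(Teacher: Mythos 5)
Your proposal is correct and follows essentially the same route as the paper: one direction is the pushout along the affine morphism $p$ furnished by Lemma~\ref{lem:homog_pushouts_int}, and the quasi-inverse is the \'etale lift of $p$ across the nilpotent thickening followed by pushforward along the counit $p^*p_*J \to J$ via Proposition~\ref{prop:exalmod}. The paper simply asserts that these functors are ``clearly quasi-inverse,'' whereas you supply the local five-lemma verification, which is a welcome amplification rather than a deviation.
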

\begin{proof}
  First, we observe that given any \'etale morphism $q : V \to T$ and
  an $X$-extension $T\hookrightarrow T'$ of $T$ by $K$, then by
  \cite[{IV}.18.1.2]{EGA}, there exists a unique $X$-extension
  $V\hookrightarrow V'$ of $V$ by $q^*K$ together with an \'etale
  morphism $V' \to T'$ such that $V'\times_{T'} T \cong V$ and the
  second projection is the map $V \to T$. This describes a functor
  $q^* : \EXAL_X(T,K) \to \EXAL_X(V,q^*K)$. Applying this with
  $K=p_*J$, we obtain a functor $\EXAL_X(T,p_*J) \to
  \EXAL_X(U,p^*p_*J)$. Applying Proposition \ref{prop:exalmod} to the
  $\Orb_U$-module homomorphism $p^*p_*J \to J$, there is an induced
  functor $\EXAL_X(U,p^*p_*J) \to \EXAL_X(U,J)$. Composing these two 
    functors produces a functor $\EXAL_X(T,p_*J) \to \EXAL_X(U,J)$. 

  Also, since the morphism $p : U \to 
  T$ is affine, $\HA$-homogeneity implies that
  there is a functor $p_*: \EXAL_X(U,J) \to \EXAL_X(T,p_*J)$. Indeed,
  given an $X$-extension $(U\hookrightarrow U')$ of $U$ by $J$, the
  $\HA$-homogeneity of $X$, combined with Lemma
  \ref{lem:homog_pushouts_int}, gives a cocartesian diagram of $X$-schemes:  
  \[
  \xymatrix@-0.8pc{U \ar@{^(->}[r] \ar[d]_p & U' \ar[d] \\ T \ar@{^(->}[r]
    & T'.}
  \]
  It is readily verified that the $X$-morphism $(T \hookrightarrow
  T')$ defines an $X$-extension of $T$ by $p_*J$. The functors
  $\EXAL_X(T,p_*J) \rightleftarrows  \EXAL_X(U,J)$ are 
  clearly quasi-inverse. 
\end{proof}
\section{Limit preservation}\label{sec:limit_preservation}
In this section we prove that the functors defined in
\S\ref{sec:extensions}, $M\mapsto \Der_X(T,M)$ and $M\mapsto
\Exal_X(T,M)$, frequently 
preserve direct limits. We also relativize the notion of
{limit preserving} $S$-groupoid \cite[\S1]{MR0399094}. 
\begin{defn}
  Fix a scheme $S$. A $1$-morphism of $S$-groupoids $\Phi : Y \to Z$
  is \fndefn{limit preserving} if given an inverse system of
  quasicompact and quasiseparated $Z$-schemes with affine transition
  maps $\{W_j\}_{j\in J}$, as well as a $Y$-scheme $V$, such that as a
  $Z$-scheme it is an inverse limit of $\{W_j\}_{j\in J}$, then there
  exists $j_0\in J$ and an essentially unique $Y$-scheme structure on
  $W_{j_0}$ (i.e.~for any two choices and all $j\gg j_0$ the two
  induced $Y$-scheme structures on $W_j$ are isomorphic) such that the
  induced diagram of $Y$-schemes $\{W_j\}_{j\geq j_0}$ has limit $V$.
  An $S$-groupoid $X$ is \fndefn{limit preserving} if its structure
  morphism to $\SCH{S}$ is so. Similarly, an $X$-scheme $T$ is
  \fndefn{limit preserving} if its structure $1$-morphism $\SCH{T} \to
  X$ is so.
\end{defn} 
Analogous to Lemma \ref{lem:homog_prop}, we have the following easily
verified lemma. 
\begin{lem}\label{lem:lim_prop}
  Fix a scheme $S$ and a $1$-morphism of $S$-groupoids $\Phi : Y \to Z$.
  \begin{enumerate}
  \item\label{lem:lim_prop:item:artin} If $Z$ is a
    Zariski stack, then it is limit preserving if and only if for any
    inverse system of affine $S$-schemes $\{\spec A_j\}_{j\in J}$
    with limit $\spec A$, the natural functor:
    \[
    \varinjlim_j Z(\spec A_j) \to Z(\spec A)
    \]
    is an equivalence.
  \item\label{lem:lim_prop:item:rep} If $Z$ is an
    algebraic stack, then it is limit preserving if and only if it is
    locally of finite presentation over $S$. 
  \item\label{lem:lim_prop:item:comp} If $\Phi$ is
    limit preserving, then for any other limit preserving $1$-morphism
    $W \to Y$, the composition $W \to Z$ is  limit preserving.
  \item\label{lem:lim_prop:item:rel_abs} The $1$-morphism $\Phi$ is
    limit preserving if and only if for any $Z$-scheme $T$, the
    $T$-groupoid $Y\times_Z \SCH{T}$ is limit preserving.
  \item\label{lem:lim_prop:item:bc} If $\Phi$ is limit preserving,
    then for any $1$-morphism of  $S$-groupoids $W \to Z$, the
    $1$-morphism $Y\times_Z W \to W$ is   limit preserving.  
  \item\label{lem:lim_prop:item:diag} If $\Phi$ is
    limit preserving, then the diagonal $1$-morphism $\Delta_\Phi : Y \to
    Y\times_Z Y$ is limit preserving.
  \end{enumerate}
\end{lem}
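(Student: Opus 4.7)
The plan is to verify each item by unwinding the definition of limit preservation, which in every case reduces to manipulating an inverse system $\{W_j\}$ of qcqs $Z$-schemes with affine transition maps together with a distinguished structure on its limit $V$. Items \itemref{lem:lim_prop:item:comp}, \itemref{lem:lim_prop:item:bc}, \itemref{lem:lim_prop:item:rel_abs}, and \itemref{lem:lim_prop:item:diag} are essentially formal and parallel the corresponding clauses of Lemma~\ref{lem:homog_prop}; items \itemref{lem:lim_prop:item:artin} and \itemref{lem:lim_prop:item:rep} pull in Zariski descent and the theory of algebraic stacks, respectively.

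For \itemref{lem:lim_prop:item:comp}, I would first descend the $W$-scheme structure on $V$ through $W\to Y$ and then descend the resulting $Y$-scheme structure through $\Phi$, combining the two essentially unique choices at a common index. For \itemref{lem:lim_prop:item:bc}, a $(Y\times_Z W)$-scheme structure on the limit decomposes as a $W$-scheme structure (which already descends), a $Y$-scheme structure (descending by limit preservation of $\Phi$), and a compatible $Z$-$2$-isomorphism, which descends because morphisms into a qcqs cofiltered limit with affine transitions are computed level-wise beyond a sufficiently small index. Item \itemref{lem:lim_prop:item:rel_abs} then follows because a $Y$-scheme structure on $T$ is exactly a $Z$-scheme structure plus a lift to the pullback $Y\times_Z\SCH{T}$, so absolute limit preservation of $\Phi$ translates to relative limit preservation of $Y\times_Z\SCH{T}\to\SCH{T}$ and conversely.

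Item \itemref{lem:lim_prop:item:diag} is the most delicate of the formal items and I expect it to be the main obstacle. The two projections $\pi_1,\pi_2:Y\times_Z Y\to Y$ are base changes of $\Phi$, hence limit preserving by \itemref{lem:lim_prop:item:bc}. A $(Y\times_Z Y)$-scheme structure on $V$ that factors through $\Delta_\Phi$ corresponds to two $Y$-scheme structures $\sigma_1,\sigma_2$ together with a $Z$-compatible $2$-isomorphism $\sigma_1\Rightarrow\sigma_2$, plus a third $Y$-scheme structure $\sigma$ together with matching $2$-isomorphisms $\sigma\Rightarrow\sigma_i$. I would descend $\sigma$ using limit preservation of $\Phi$, then descend the two identifying $2$-isomorphisms using the same cofiltered-limit-of-Homs argument as in \itemref{lem:lim_prop:item:bc}, collecting everything at a common index and invoking essential uniqueness to ensure compatibility.

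Finally, for \itemref{lem:lim_prop:item:artin} the forward direction specializes to affine systems. For the converse, I would reduce an arbitrary inverse system of qcqs $Z$-schemes with affine transitions to the affine case by choosing a common finite affine Zariski cover at some base index, pulling it back through the affine transition maps to obtain a compatible system of affine covers, descending each piece via the affine hypothesis, and gluing via the Zariski stack property. For \itemref{lem:lim_prop:item:rep}, the forward direction is immediate, and the converse combines \itemref{lem:lim_prop:item:artin} with the standard characterization of locally finitely presented algebraic stacks via their behaviour on filtered limits of affine schemes.
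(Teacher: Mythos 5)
Your proposal is correct and follows essentially the same route as the paper, which simply declares all items except \itemref{lem:lim_prop:item:rep} to be straightforward unwindings of the definition and cites \cite[4.15--18]{MR1771927} for the equivalence between limit preservation and local finite presentation of an algebraic stack. Your extra detail (the Zariski-cover reduction for \itemref{lem:lim_prop:item:artin} and the level-wise descent of $2$-isomorphisms for \itemref{lem:lim_prop:item:bc} and \itemref{lem:lim_prop:item:diag}) is consistent with what the paper leaves implicit.
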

\begin{proof}
  The only non-obvious point is \itemref{lem:lim_prop:item:rep}, which
  follows from \cite[4.15--18]{MR1771927}. 
\end{proof}
\begin{ex}
  Fix a scheme $S$ and a limit preserving $S$-groupoid $X$. Then, an
  $X$-scheme is limit preserving if and only if it is locally of 
  finite presentation over $S$.  
\end{ex}
We now have the main result of this section.
\begin{prop}\label{prop:exalfp}
  Fix a scheme $S$, a $\HNIL$-homogeneous $S$-groupoid $X$, and a
  quasicompact, quasiseparated, limit preserving
  $X$-scheme $T$. 
  \begin{enumerate}
  \item \label{prop:exalfp:item:der} The functor $M\mapsto \Der_X(T,M)$
    preserves direct limits.  
  \item \label{prop:exalfp:item:ex} If, in addition, $X$ is limit
    preserving, then the functor $M\mapsto \Exal_X(T,M)$ 
    preserves direct limits. 
  \end{enumerate}
\end{prop}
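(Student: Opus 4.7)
My plan is to reduce both parts to applications of the limit-preservation hypotheses: first transport the statement about $\Der_X$ or $\Exal_X$ into a statement about morphism sets in $\SCH{X}$ via Lemma~\ref{lem:exal_secs} and the fibration structure of Proposition~\ref{prop:exalmod}; then express the trivial (or pushed-out) extension as an inverse limit of qcqs $X$-schemes with affine transitions; and finally invoke the appropriate limit-preservation hypothesis.

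For part~\itemref{prop:exalfp:item:der}, Lemma~\ref{lem:exal_secs} identifies $\Der_X(T,M)$ with $\Ret_X(T/T\extn{M})$, reducing the question to morphisms $T\extn{M}\to T$ in $\SCH{X}$. Writing $M = \varinjlim_j M_j$, the algebras $\Orb_T\oplus M_j$ form a filtered colimit with value $\Orb_T\oplus M$, so $T\extn{M} = \varprojlim_j T\extn{M_j}$ as qcqs $X$-schemes with affine transitions. Limit preservation of $T$ as an $X$-scheme then yields $\Hom_{\SCH{X}}(T\extn{M},T) = \varinjlim_j \Hom_{\SCH{X}}(T\extn{M_j},T)$; since $\trvext{T}{M_j}$ factors through $\trvext{T}{M}$, this restricts to retractions, giving the claim.

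The injectivity of $\varinjlim_j \Exal_X(T,M_j)\to \Exal_X(T,M)$ in part~\itemref{prop:exalfp:item:ex} follows by the same mechanism: a class $[T\hookrightarrow T'_{M_i}]$ becomes trivial in $\Exal_X(T,M)$ iff its pushout $T'_M := T'_{M_i}\sqcup^{M_i} M$ admits a retraction $r: T'_M\to T$, by Lemma~\ref{lem:exal_secs}. Setting $T'_{M_j} := T'_{M_i}\sqcup^{M_i} M_j$ for $j\geq i$, I obtain $T'_M = \varprojlim_j T'_{M_j}$ as qcqs $X$-schemes with affine transitions; limit preservation of $T$ factors $r$ through some $r_{j_0}: T'_{M_{j_0}}\to T$, and the canonical closed immersions $T\hookrightarrow T'_{M_{j_0}}\to T'_M$ show $r_{j_0}$ retracts $T\hookrightarrow T'_{M_{j_0}}$, so the class vanishes at stage $j_0$.

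The main obstacle is the surjectivity, which is where limit preservation of $X$ becomes essential. Given an $X$-extension $(T\hookrightarrow T'_M)$ with structure $\xi_{T'_M}\in\FIB{X}{T'_M}$, I would construct a scheme-theoretic sub-extension as follows. Since $T$ is qcqs and locally of finite presentation over $S$ (using that $T\to X$ and $X\to S$ are both limit preserving), pick a quasicoherent sub-$\Orb_S$-algebra $\mathcal{R}_0\subset\Orb_{T'_M}$ of finite type that surjects onto $\Orb_T$; its kernel $N_0 := \mathcal{R}_0\cap M$ is a finite-type $\Orb_T$-submodule of $M$, so finiteness forces $N_0 \subseteq M_{j_0}$ for some index $j_0$ since $M = \varinjlim_j M_j$. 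Defining $\mathcal{R}_k := \mathcal{R}_0 + M_k$ for $k\geq j_0$ produces an exhaustion $\Orb_{T'_M} = \varinjlim_k \mathcal{R}_k$ by sub-extensions $0\to M_k\to \mathcal{R}_k\to \Orb_T\to 0$, so the associated schemes $T_k$ form an inverse system with $T'_M = \varprojlim_k T_k$ as qcqs $X$-schemes with affine transitions. Limit preservation of $X$ now descends $\xi_{T'_M}$ to some $\xi_{k_0}\in \FIB{X}{T_{k_0}}$, producing the sought $X$-extension by $M_{k_0}$ whose pushout is $(T\hookrightarrow T'_M)$.
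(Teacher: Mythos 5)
Your part~\itemref{prop:exalfp:item:der} and your injectivity argument in part~\itemref{prop:exalfp:item:ex} coincide with the paper's: identify triviality of extensions with existence of retractions via Lemma~\ref{lem:exal_secs}, and apply limit preservation of the $X$-scheme $T$ to the inverse systems $\{T\extn{M_j}\}$ and $\{T'_{M_j}\}$. The gap is in the surjectivity step. Your subalgebra $\mathcal{R}_k=\mathcal{R}_0+M_k\subset\Orb_{T'_M}$ only makes sense after replacing $M_k$ by its image $\overline{M}_k=\im(M_k\to M)$, and the resulting sub-extension $0\to \mathcal{R}_k\cap M\to\mathcal{R}_k\to\Orb_T\to 0$ is an extension of $T$ by the submodule $N_0+\overline{M}_k$ of $M$, not by $M_k$. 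So after descending the $X$-structure you land in $\Exal_X(T,\overline{M}_{k_0})$, whereas surjectivity of $\varinjlim_j\Exal_X(T,M_j)\to\Exal_X(T,M)$ requires a class in some $\Exal_X(T,M_k)$. Lifting along the surjection $M_k\twoheadrightarrow\overline{M}_k$ is precisely what $\Exal_X(T,-)$ need not do: by Corollary~\ref{cor:6term} it is only half-exact, so $\Exal_X(T,M_k)\to\Exal_X(T,\overline{M}_k)$ can fail to be surjective, and you give no argument that it becomes surjective in the colimit. In the extreme case of a directed system whose transition maps are surjective but not injective, every $\overline{M}_k$ equals $M$, every $\mathcal{R}_k$ equals $\Orb_{T'_M}$, and your construction returns the original extension unchanged --- the argument is circular. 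What you have actually proved is preservation of filtered unions (systems of subobjects), which is strictly weaker than preservation of arbitrary direct limits.

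The paper avoids this by building the approximating system differently. It first proves surjectivity for the absolute functor $\Exal_S(T,-)$ with $S$ and $T$ affine: a closed immersion $k:T\hookrightarrow\Aff^n_S$ yields a functorial surjection $\Hom_{\Orb_T}(k^*\Omega_{\Aff^n_S/S},-)\twoheadrightarrow\Exal_S(T,-)$ from a functor that commutes with all direct limits because its source is finite free; this handles arbitrary filtered systems, not just unions. Having descended the underlying $S$-extension to a genuine extension by some $M_{j_0}$, it then forms the pushforwards along $M_{j_0}\to M_j$ --- whose terms really are extensions by $M_j$ --- and only then invokes limit preservation of $X$ to descend the $X$-scheme structure, as you do at the end. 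To salvage your approach you would need an analogue of that surjection from a colimit-preserving functor onto $\Exal_X(T,-)$ (or a reduction to the case of injective transition maps), neither of which is supplied.
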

\begin{proof}
Throughout, we fix a directed system of quasicoherent $\Orb_T$-modules
$\{M_j\}_{j \in J}$ with direct limit $M$. Certainly, in the category of
$X$-schemes the natural map $T\extn{M} \to \varprojlim_j T\extn{M_j}$
is an isomorphism. For \itemref{prop:exalfp:item:der}, by Lemma
\ref{lem:exal_secs} we have:
\begin{align*}
  \Der_X(T,M) &= \Ret_{X}(T/T\extn{M}) = \varinjlim_j
  \Ret_X(T/T\extn{M_j}) = \varinjlim_j \Der_X(T,M_j).
\end{align*}
For \itemref{prop:exalfp:item:ex}, we first show that the map
$\varinjlim_j \Exal_X(T,M_j) \to \Exal_X(T,M)$ is injective. Lemma
\ref{lem:exal_secs} shows that an $X$-extension 
$(T\hookrightarrow T'')$ of $T$ by a quasicoherent $\Orb_T$-module
$N$ represents $0$ in $\Exal_X(T,N)$ if and only if $\Ret_X(T/T'')
\neq \emptyset$. So, given a compatible collection of
$X$-extensions $(T\hookrightarrow T_j')$ of $T$ by $M_i$, with
limit $(T\hookrightarrow T')$, then since $\Ret_X(T/T') = \varinjlim_j
\Ret_X(T/T_j')$, we deduce that the map $\varinjlim_j \Exal_X(T,M_j) \to
\Exal_X(T,M)$ is injective.

We now show that the natural map $\varinjlim_j \Exal_X(T,M_j) \to
\Exal_X(T,M)$ is surjective. First, we prove the result in the case
where $X=S$ and $S$ and $T$ are affine. Since $T$ is affine and of
finite presentation over $S$, there exists an integer $n$ and a closed
immersion $k : T\hookrightarrow \Aff^n_S$. By
\cite[$0_{\mathrm{IV}}$.20.2.3]{EGA}, there is a functorial surjection
for every $\Orb_T$-module $K$:
$\Hom_{\Orb_T}(k^*\Omega_{\Aff^n_S/S},K) \twoheadrightarrow
\Exal_S(T,K)$. Since the $\Orb_T$-module $k^*\Omega_{\Aff^n_S/S}$ is
finite free, it follows that the functor $K\mapsto
\Hom_{\Orb_T}(k^*\Omega_{\Aff^n_S/S},K)$ preserves direct
limits. Direct limits are exact so we have a surjection
$\varinjlim_j\Exal_S(T,M_j) \twoheadrightarrow \Exal_S(T,M)$. 

If $S$ and $T$ are no longer assumed to be affine, a straightforward Zariski
descent argument, combined with the affine case already considered,
shows that we also have a bijection $\varinjlim_j \Exal_S(T,M_j) \to
\Exal_S(T,M)$. Now for the general case: given $(T \hookrightarrow T')
\in \Exal_X(T,M)$, by the above considerations there exists a $j_0$
and an $S$-extension of $T$ by $M_{j_0}$, $(T\hookrightarrow
T'_{j_0})$, such that its pushforward along $M_{j_0} \to M$ is
isomorphic to $(T\hookrightarrow T')$ as an $S$-extension. If $j\geq
j_0$, denote the pushforward of $(T\hookrightarrow T'_{j_0})$ along
the morphism $M_{j_0} \to M_j$ by $(T \hookrightarrow T'_j)$. There is
a natural morphism of $S$-schemes $T'_j \to T'_{j_0}$ and the resulting
inverse system $\{T'_j\}_{j\geq j_0}$ has limit $T'$. Since $X$ is a
limit preserving $S$-groupoid, there exists $j_1 \geq j_0$ and an
$X$-scheme structure on $T'_{j_1}$ such that the resulting inverse
system of $X$-schemes $\{T'_{j}\}_{j\geq j_1}$ has limit
$T'$. The result follows. 
\end{proof}
\section{Formal smoothness and formal versality}\label{sec:fs_fv}
In this section we prove the main result of the paper. 
\begin{defn}
  Fix a scheme $S$, an $S$-groupoid $X$, and an $X$-scheme $T$. Consider
  the following lifting problem: given a square zero closed immersion
  of $X$-schemes $Z_0 \hookrightarrow Z$ fitting into a
  commutative diagram of $X$-schemes:
  \[
  \xymatrix{ Z_0 \ar@{_(->}[d] \ar[r]^g  & T \ar[d]  \\ Z
    \ar[r] \ar@{-->}[ur]   & X.}
  \]
  We say that the $X$-scheme $T$ is
  \begin{mydescription}
  \item[\fndefn{formally smooth}] if the lifting problem above 
    can always be solved \'etale locally on $Z$;
  \item[\fndefn{formally versal at $t\in |T|$}] if the lifting problem
    can be solved whenever $Z$ is local artinian, with
    closed point $z$, such that $g(z) = t$, $\kappa(z) \cong
    \kappa(t)$, and there is an isomorphism 
    of $\Orb_T$-modules $\kappa(t) \cong g_*\ker (\Orb_{Z}\to
    \Orb_{Z_0})$.
  \end{mydescription}
\end{defn}
We certainly have the following implication:
\[
{\text{formally smooth} \Rightarrow \text{formally versal at all $t\in
    |T|$}.} 
\]
In general, there is no reverse implication. We will see,
however, that this subtlety vanishes once the $S$-groupoid is
$\HA$-homogeneous.
\begin{ex}
Fix an $S$-groupoid $X$ and an $X$-scheme $T$ such that the
$1$-morphism $T \to X$ is representable by algebraic spaces which are
locally of finite presentation. Then, the $X$-scheme $T$ is formally
smooth if and only if the $1$-morphism $T\to X$ is representable by
smooth morphisms of algebraic spaces. 
\end{ex}
There is a tight connection between formal smoothness
(resp.\ formal 
versality) and $X$-extensions in the \emph{affine} setting. The next
result has arguments similar to those of \cite[Satz 3.2]{MR638811},
but the definitions are slightly different. 
\begin{lem}\label{lem:smooth}
  Fix a scheme $S$, an $S$-groupoid $X$, and an \emph{affine}
  $X$-scheme $T$. 
  \begin{enumerate}
  \item\label{lem:item:smooth:fs} If $X$ is
    $\HA$-homogeneous 
    and the abelian group 
    $\Exal_{X}(T,M)$ is trivial for all quasicoherent $\Orb_T$-modules
    $M$, then the $X$-scheme $T$ is 
    formally smooth.
  \item\label{lem:item:smooth:fv_suff} If $X$ is
    $\HrCL$-homogeneous and at a 
    {closed} point $t\in |T|$, $\Exal_X(T,\kappa(t)) = 0$,
    then the $X$-scheme $T$ is formally versal at $t$. 
  \item\label{lem:item:smooth:fv_nec} If $X$ is $\HCL$-homogeneous
    and $T$ is noetherian and formally versal at a {closed} point $t
    \in |T|$, then $\Exal_{X}(T,\kappa(t))=0$.    
 \end{enumerate}
\end{lem}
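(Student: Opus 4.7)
For all three parts the strategy is to move between lifting problems and triviality of $X$-extensions via the pushout machinery of Lemma \ref{lem:homog_pushouts_int} and the retraction characterization of Lemma \ref{lem:exal_secs}.

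For \itemref{lem:item:smooth:fs}, the lifting problem is \'etale-local on $Z$, so I may assume $Z$ is affine; then $Z_0$ is also affine and $g\colon Z_0 \to T$ is a morphism of affine schemes, in particular $g\in\HA$. The square zero closed immersion $Z_0\hookrightarrow Z$ is an object of $\EXAL_X(Z_0,I)$, where $I=\ker(\Orb_Z\to\Orb_{Z_0})$. Because $X$ is $\HA$-homogeneous, Lemma \ref{lem:homog_pushouts_int} produces a cocartesian square of $X$-schemes
\[
\xymatrix@-0.8pc{Z_0 \ar@{^(->}[r] \ar[d]_g & Z \ar[d] \\ T \ar@{^(->}[r] & T',}
\]
and the resulting $(T\hookrightarrow T')$ lies in $\EXAL_X(T,g_*I)$. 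By assumption $\Exal_X(T,g_*I)=0$, so Lemma \ref{lem:exal_secs} supplies an $X$-retraction $r\colon T'\to T$. The composition $Z \to T' \xrightarrow{r} T$ is the desired lift. For \itemref{lem:item:smooth:fv_suff} the argument is identical, except that $g\colon Z_0\to T$ is now in $\HrCL$ rather than in $\HA$: the reduction $\spec\kappa(z)\hookrightarrow Z_0$ is a nilpotent closed immersion (since $Z_0$ is local artinian), and the composition $\spec\kappa(z)\to T$ is the closed immersion of the residual point at $t$ because $\kappa(z)\cong\kappa(t)$. The hypothesis on $g_*I$ identifies the pushed-out extension with an element of $\Exal_X(T,\kappa(t))$, which vanishes by assumption.

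For \itemref{lem:item:smooth:fv_nec} the direction is reversed. Fix $(T\hookrightarrow T')\in \EXAL_X(T,\kappa(t))$, write $T=\spec A$, $T'=\spec A'$, and let $\mathfrak{m}\ideal A$ be the maximal ideal at $t$ and $\mathfrak{n}\ideal A'$ its preimage. A short computation using $\mathfrak{m}\cdot\kappa(t)=0$ shows $\mathfrak{n}^n=\tilde{\mathfrak m}^n$ for $n\geq 2$, where $\tilde{\mathfrak m}$ is generated by any lifts of generators of $\mathfrak{m}$. Since $I=\kappa(t)$ is simple as an $A'$-module and $\bigcap_n\mathfrak{n}^n=0$ after localising at $t$ (Krull intersection applied to the noetherian local ring $A'_{\mathfrak n}$), there exists $n$ with $I\cap\mathfrak{n}^n=0$. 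Fix such an $n$ and set $Z_0=\spec A/\mathfrak{m}^n$, $Z=\spec A'/\mathfrak{n}^n$. Then $Z_0\hookrightarrow Z$ is a square zero closed immersion of $X$-schemes whose kernel is $I/(I\cap\mathfrak{n}^n)=\kappa(t)$, and $Z$ is local artinian with residue field $\kappa(t)$. Formal versality at $t$ applied to $Z_0\hookrightarrow Z$ and the inclusion $g\colon Z_0\hookrightarrow T$ yields a lift $h\colon Z\to T$.

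To turn $h$ into a retraction of $T\hookrightarrow T'$, form the pushout $W=T\sqcup_{Z_0}Z$ in $X$-schemes, which exists by $\HCL$-homogeneity and Lemma \ref{lem:homog_pushouts_int}, together with the induced $X$-morphism $W\to T'$ coming from $T\hookrightarrow T'$ and $Z\hookrightarrow T'$. The structure sheaf of $W$ is $A\times_{A/\mathfrak{m}^n}(A'/\mathfrak{n}^n)$; the choice of $n$ forces the natural ring map $A'\to A\times_{A/\mathfrak{m}^n}(A'/\mathfrak{n}^n)$ to be a bijection (injectivity uses $I\cap\mathfrak{n}^n=0$, surjectivity is a diagram chase lifting an element of $A$ to $A'$ and correcting by $I$), so $W\to T'$ is an isomorphism of $X$-schemes. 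Finally the pair $(\mathrm{id}_T,h)\colon T\to T$ and $Z\to T$ agree on $Z_0$, so the pushout property supplies a unique $X$-morphism $r\colon T'\cong W\to T$ restricting to the identity on $T$; this is the sought retraction, and by Lemma \ref{lem:exal_secs} the class $(T\hookrightarrow T')$ is zero in $\Exal_X(T,\kappa(t))$.

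The principal technical point, and the only step that goes beyond unpacking definitions, is the verification that $W\cong T'$; this is where $\HCL$-homogeneity (rather than the weaker $\HrCL$) is used and where the choice of $n$ via Krull intersection enters. The other delicate point is checking that the extension $Z_0\hookrightarrow Z$ satisfies the precise hypotheses of the formal versality definition (closed point mapping to $t$, matching residue fields, and $g_*\ker=\kappa(t)$), which is exactly guaranteed by the construction above.
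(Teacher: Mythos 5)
Your proposal is correct and follows essentially the same route as the paper: parts (1) and (2) via pushing out the extension $Z_0\hookrightarrow Z$ along $g$ and invoking Lemma \ref{lem:exal_secs}, and part (3) by truncating $T'$ at a suitable power of the maximal ideal, lifting via formal versality, and identifying $T'$ with the pushout $T\sqcup_{Z_0}Z$ (the paper phrases this as the comparison map $\tilde T_n\to T'$ being an isomorphism for $n\gg 0$ because $I\to I_n$ is, which is your Krull-intersection choice of $n$ in disguise). One terminological slip: since $\HCL\subset\HrCL$, $\HrCL$-homogeneity is the \emph{stronger} hypothesis, not the weaker one; part (3) gets away with $\HCL$ simply because the map $Z_0\to T$ there is an honest closed immersion, as your proof correctly uses.
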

\begin{proof}
For \itemref{lem:item:smooth:fs}, fix a
square zero closed immersion $Z_0 \hookrightarrow Z$ 
(defined by a quasicoherent $\Orb_{Z_0}$-module $I$) of
$X$-schemes, fitting into a commutative diagram: 
\[
\xymatrix@-0.8pc{Z_0 \ar[r]_g \ar@{_(->}[d]& T \ar[d] \\ Z \ar[r] & X.}
\]
We need to construct an $X$-morphism $Z \to T$ \'etale
locally on $Z$. Thus we easily 
reduce to the case where $Z_0$, $Z$, and $T$ are  
affine. Lemma \ref{lem:homog_pushouts_int} now gives a cocartesian
diagram of $X$-schemes: 
\[
\xymatrix@-0.8pc{Z_0 \ar[r]_g \ar@{_(->}[d]& T \ar@{^(->}[d] \\ Z
  \ar[r] & T',} 
\]
where the $X$-morphism $T \to T'$ defines an $X$-extension of $T$
by $g_*I$. 
By hypothesis, $\Exal_X(T,g_*I)=0$, and
Lemma  \ref{lem:exal_secs} produces an $X$-retraction $T' \to
T$. The composition $Z \to T' \to T$ gives the required
lifting. The claim \itemref{lem:item:smooth:fv_suff} follows from an
identical argument just given for \itemref{lem:item:smooth:fs}.  

For \itemref{lem:item:smooth:fv_nec}, given an $X$-extension
$T \hookrightarrow T'$ of $T$ by 
$\kappa(t)$, write $T=\spec R$, $T'=\spec R'$, $\mathfrak{m} = t\in
|T|$, and $I = \ker (R'\to R) \cong 
R/\mathfrak{m}$. Let the ideal $\mathfrak{m}' 
\ideal R'$ denote the 
(unique) maximal ideal induced by $\mathfrak{m}$. For $n\geq 0$
define $R_n = R/\mathfrak{m}^{n+1}$, $R_n' =
R'/\mathfrak{m}'^{n+1}$, and $I_n = \ker (R_n' \to R_n)$. The
following diagram commutes:  
\[
\xymatrix@-0.8pc{\spec R_n \ar@{^(->}[d] \ar[r] & T \ar@{=}[r]
  \ar@{^(->}[d] & T \ar[d] \\ \spec R_n' \ar[r] & T' \ar[r] & X.} 
\]
Formal versality at $t\in |T|$ gives for each $n\geq 0$ an
$X$-morphism $\spec R_n' \to T$ completing the diagram.
For each $n\geq 0$ there is also 
a cocartesian diagram of $X$-schemes (Lemma~\ref{lem:homog_pushouts_int}): 
\[
\xymatrix@-0.8pc{\spec R_n \ar@{^(->}[d] \ar[r] & T
  \ar@{^(->}[d]\\ \spec R_n' \ar[r] & \tilde{T}_n.} 
\]
Thus, an $X$-morphism $\spec R_n'
\to T$ induces a unique $X$-retraction $\tilde{T}_n \to T$ to the
$X$-extension $T \hookrightarrow \tilde{T}_n$. Moreover, there is
a unique morphism of
$X$-extensions $\alpha : (T\hookrightarrow \tilde{T}_n) \to (T
\hookrightarrow T')$. Since
the $R$-module $I$ is of length $1$, it follows that for $n \gg 0$ the
surjective map $I \to I_n$ is an isomorphism. Thus, the morphism
$\alpha$ is an  isomorphism for $n\gg 0$ and the $X$-extension 
$T\hookrightarrow T'$ admits an $X$-retraction. By Lemma
\ref{lem:exal_secs}, $\Exal_X(T,\kappa(t)) = 0$. 
\end{proof}
\begin{rem}
  With some additional work and some finiteness assumptions, it is
  possible to prove the converse to Lemma
  \ref{lem:smooth}\eqref{lem:item:smooth:fs}. 
\end{rem}
Fix an affine scheme $T$ and an additive functor $F : \QCOH{T} \to
\AB$. The functor $F$ is \fndefn{finitely generated} if there exists a
quasicoherent $\Orb_T$-module $I$ and an object $\eta \in F(I)$ such 
that for all $M\in \QCOH{T}$, the induced morphism of abelian groups
$\Hom_{\Orb_T}(I,M) \to F(M) : f\mapsto f_*\eta$ is surjective. The
notion of finite generation of a functor is due to M. Auslander
\cite{MR0212070}. 

The functor $F$ is
\fndefn{half-exact} if for any short exact sequence in $\QCOH{T}$, $0
\to M' \to M \to M'' 
\to 0$, the sequence $F(M') \to F(M) \to F(M'')$ is exact. 

If, in addition, the scheme $T$ is noetherian, and $F$ is
half-exact, sending coherent $\Orb_T$-modules to coherent
$\Orb_T$-modules, then A. Ogus and G. Bergman have shown
\cite[Thm.\ 2.1]{MR0302633} that if for all closed points $t
\in |T|$ we have $F(\kappa({t})) = 0$, then $F$ is the zero
functor. If $F$ is finitely generated, then this result can be
refined. Indeed, it is shown in \cite[Cor.~6.7]{hallj_coho_bc} that
if $F(\kappa(t)) = 0$, then there exists an affine open
subscheme $p : U \hookrightarrow T$ such that the composition $F \circ
p_*(-) : \QCOH{U} \to \AB$ is identically zero. We now use this
to prove the main technical result of the paper.  
\begin{thm}\label{thm:opennessversal}
  Fix a locally noetherian scheme $S$, an $\HA$-homogeneous and limit
  preserving $S$-groupoid $X$, and an affine $X$-scheme 
  $T$, locally of finite type over $S$. If the functor $M
  \mapsto \Exal_X(T,M)$ is 
  finitely generated and $T$ is formally versal at a closed point
  $t\in |T|$, then it is formally smooth in an open neighbourhood of
  $t$.   
\end{thm}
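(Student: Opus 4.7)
The strategy is to exhibit an affine open neighbourhood $U$ of $t$ in $T$ on which the functor $N \mapsto \Exal_X(U,N)$ vanishes identically; Lemma \ref{lem:smooth}\itemref{lem:item:smooth:fs}, applied to $U$ with its induced structure as an affine $X$-scheme, will then yield formal smoothness of $U$ over $X$, which is the desired conclusion. Since $S$ is locally noetherian and $T$ is affine and locally of finite type over $S$, the scheme $T$ is noetherian. Because $\HA$-homogeneity implies $\HCL$-homogeneity, Lemma \ref{lem:smooth}\itemref{lem:item:smooth:fv_nec}, applied to the closed point $t$, yields $\Exal_X(T,\kappa(t)) = 0$.

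Next, consider the functor $F : \QCOH{T} \to \AB$, $M \mapsto \Exal_X(T,M)$. It is finitely generated by hypothesis. Since $\HA$-homogeneity entails $\HrNIL$-homogeneity, Corollary \ref{cor:6term} supplies a natural $6$-term exact sequence, the last three terms of which make $F$ half-exact. Invoking the refined Ogus--Bergman-type statement \cite[Cor.~6.7]{hallj_coho_bc}, the vanishing $F(\kappa(t)) = 0$ at the closed point $t$ should produce an affine open immersion $p : U \hookrightarrow T$ with $t \in |U|$ such that the composition $F \circ p_* : \QCOH{U} \to \AB$ is identically zero.

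To transfer this vanishing to $U$ itself, observe that $p$ is both affine (as a morphism between affine schemes) and \'etale (as an open immersion), and $X$ is $\HA$-homogeneous, so Corollary \ref{cor:et_loc_exal} furnishes an equivalence of Picard categories $\EXAL_X(U,N) \to \EXAL_X(T,p_*N)$ for every $N \in \QCOH{U}$. Passing to isomorphism classes gives $\Exal_X(U,N) \cong F(p_*N) = 0$ for all such $N$, and then Lemma \ref{lem:smooth}\itemref{lem:item:smooth:fs} completes the proof.

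The main obstacle I anticipate is precisely the invocation of \cite[Cor.~6.7]{hallj_coho_bc}: the classical Ogus--Bergman result requires $F$ to send coherent $\Orb_T$-modules to coherent $\Orb_T$-modules, and the finite-generation hypothesis as stated only yields a surjection from $\Hom_{\Orb_T}(I,-)$ with $I$ a priori merely quasicoherent. Verifying the coherent-to-coherent property will therefore demand some additional bookkeeping, presumably by reducing the generator $I$ to a coherent one using the noetherian hypothesis on $T$ (or by appealing directly to the companion paper's treatment of finitely generated functors, which is adapted to exactly this situation).
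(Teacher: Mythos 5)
Your argument follows the paper's proof step for step: Lemma \ref{lem:smooth}\itemref{lem:item:smooth:fv_nec} to get $\Exal_X(T,\kappa(t))=0$, Corollary \ref{cor:6term} for half-exactness, \cite[Cor.~6.7]{hallj_coho_bc} to produce the affine open $p:U\hookrightarrow T$ with $\Exal_X(T,p_*(-))=0$, Corollary \ref{cor:et_loc_exal} to identify this with $\Exal_X(U,-)$, and Lemma \ref{lem:smooth}\itemref{lem:item:smooth:fs} to conclude. The one loose end is exactly the "obstacle" you flag at the end, and it is resolved not by massaging the generator $I$ into a coherent module by hand, but by Proposition \ref{prop:exalfp}: one must verify that $M\mapsto\Exal_X(T,M)$ \emph{preserves direct limits}, which (together with finite generation and half-exactness on the noetherian scheme $T$) is the hypothesis under which \cite[Cor.~6.7]{hallj_coho_bc} applies and which substitutes for the coherent-to-coherent condition in the classical Ogus--Bergman statement. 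This is also precisely where the hypotheses that $X$ is limit preserving and that $T$ is locally of finite type over $S$ (hence a limit preserving $X$-scheme) are used; the fact that your write-up never invokes the limit-preservation hypothesis on $X$ is the telltale sign of the missing step. With Proposition \ref{prop:exalfp} inserted before the appeal to \cite[Cor.~6.7]{hallj_coho_bc}, your proof coincides with the paper's.
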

\begin{proof}
  By Lemma \ref{lem:smooth}\eqref{lem:item:smooth:fv_nec},
  $\Exal_X(T,\kappa(t)) = 0$. By Corollary \ref{cor:6term} the functor
  $M\mapsto \Exal_X(T,M)$ is half-exact, and by Proposition
  \ref{prop:exalfp} it commutes with direct limits. As
  $\Exal_X(T,-)$ is finitely generated, \cite[Cor.~6.7]{hallj_coho_bc}
  now applies. Thus, there exists an affine open 
  neighbourhood $p :U 
  \hookrightarrow T$ of $t$ such that the functor
  $\Exal_X(T,p_*(-)) : \QCOH{U} \to \AB$ is the zero functor. By
  Corollary \ref{cor:et_loc_exal}, $\Exal_X(U,-)$ is also the zero
  functor. By Lemma \ref{lem:smooth}\eqref{lem:item:smooth:fs}, we
  conclude that $U$ is a formally smooth $X$-scheme.
\end{proof}
We will defer the proof of the following Corollary until
\S\ref{sec:crit_alg} as we currently lack the necessary computational
tools (e.g.\ the relationship between $\Exal$ and
$\Def$).
\begin{cor}\label{cor:repcrit1}
  Fix an excellent scheme $S$. An $S$-groupoid $X$ is an
  algebraic $S$-stack, locally of finite presentation over $S$, if 
  and only if the following conditions are satisfied.
  \begin{enumerate}
  \item $X$ is a stack over the site $(\SCH{S})_\Et$.
  \item $X$ is limit preserving.
  \item $X$ is $\HA$-homogeneous.
  \item The diagonal $\Delta_{X/S} : X \to X\times_S X$ is
    representable by algebraic spaces.
  \item For any local noetherian ring $(B,\mathfrak{m})$, such that
    the ring $B$ is $\mathfrak{m}$-adically complete, with an
    $S$-scheme structure $\spec B \to S$ such that the induced
    morphism $\spec (B/\mathfrak{m}) \to S$ is locally of finite type,
    then the natural functor:
    \[
    \FIB{X}{\spec B} \to \varprojlim_n \FIB{X}{\spec (B/\mathfrak{m}^{n})}
    \]
   has dense image.
  \item For any affine $X$-scheme $T$, locally of finite
    type over $S$, the functor $M \mapsto \Exal_X(T,M)$ is finitely
    generated.   
  \end{enumerate}
\end{cor}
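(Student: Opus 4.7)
The plan is to prove the two directions separately, with the sufficiency direction following the three-step strategy (i)–(iii) outlined in the introduction and built on Theorem~\ref{thm:opennessversal}.

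For necessity, if $X$ is an algebraic $S$-stack locally of finite presentation, each of (1)--(6) follows from standard results: the stack condition is automatic; limit preservation is Lemma~\ref{lem:lim_prop}\itemref{lem:lim_prop:item:rep}; $\HA$-homogeneity is Wise's result \cite[Prop.~2.1]{2011arXiv1111.4200W}; representability of the diagonal is part of the definition of an algebraic stack; effectivity is a version of Grothendieck's existence theorem; and the finite generation of $\Exal_X(T,-)$ is a consequence of the existence of the cotangent complex (one presents $\Exal_X(T,-)$ as $\Ext^1$ of a perfect-in-degree-$0$ piece against $-$).

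For sufficiency, fix a point $x_0\colon \spec k \to X$. Limit preservation \itemref{lem:lim_prop:item:artin} lets us assume $k$ is of finite type over $S$. The first task (i) is to build a formally versal deformation of $x_0$ over a complete local noetherian ring $(\hat R,\mathfrak n)$ with residue field $k$: $\HA$-homogeneity provides the Schlessinger--Rim apparatus on artinian thickenings, and the finite generation of $\Exal_X(T,-)$ (together with Corollary~\ref{cor:6term} and the relationship between $\Exal$ and $\Def$ to be established in \S\ref{sec:def_obs_comp}) controls the tangent space enough to inductively solve the artinian lifting problem. Task (ii) is algebraization: condition (5) promotes the formal system to $\hat\xi\in X(\spec \hat R)$ with dense image, and combining this with the Conrad--de~Jong generalization of Artin \cite[Thm.~1.5]{MR1935511} produces an affine $S$-scheme $T=\spec R$ of finite type, a closed point $t\in |T|$, and an object $\xi\in X(T)$ such that the induced map $\SCH{T}\to X$ is formally versal at~$t$. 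Task (iii) is precisely Theorem~\ref{thm:opennessversal}, applied via condition~(6): $T\to X$ is formally smooth in an affine open neighbourhood $U$ of~$t$. Since $U$ is limit preserving over $X$ and the diagonal is representable by algebraic spaces (condition~(4)), the morphism $U\to X$ is representable, locally of finite presentation, and formally smooth, hence smooth. Taking the disjoint union of such $U$'s as $x_0$ varies over all field-valued points produces a smooth surjection from a scheme onto $X$; together with condition~(4) this exhibits $X$ as an algebraic stack locally of finite presentation over $S$.

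The main obstacle is the algebraization step (ii), in which one must marry the \emph{dense-image} version of effectivity in condition~(5) with Conrad--de~Jong's approximation machinery to extract a genuinely formally versal object at a closed point of a finite-type scheme---all without recourse to Artin approximation \cite{MR0268188}. The other step that is genuinely content-bearing, step~(iii), is already done. Steps comparing $\Exal$ and $\Def$ (needed to translate condition~(6) into control over infinitesimal automorphisms and deformations) are routine but require the bookkeeping of \S\ref{sec:def_obs_comp}, which is why this corollary is deferred; once that dictionary is in place, the argument above assembles formally from Theorem~\ref{thm:opennessversal}, Lemma~\ref{lem:homog_prop}, and Lemma~\ref{lem:lim_prop}.
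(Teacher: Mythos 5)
Your proposal is correct and follows essentially the same route as the paper: reduce to finite-type field-valued points via limit preservation, use $\HA$-homogeneity plus finite generation of $\Exal$ (hence finite-dimensionality of $\Def_{X/S}(\xi,\Bbbk)$ via Proposition~\ref{prop:derdefseq}) and condition (5) to invoke Conrad--de Jong for a formally versal affine $X$-scheme at a closed point, then apply Theorem~\ref{thm:opennessversal} and condition (4) to get a smooth representable neighbourhood, and assemble the atlas. The only cosmetic difference is that the paper lets \cite[Thm.~1.5]{MR1935511} handle your steps (i) and (ii) in one stroke, so the ``main obstacle'' you flag is exactly what that theorem is cited to dispose of.
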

\section{Coherent functors}\label{sec:coherent}
Fix a ring $A$. An additive functor $F:\MOD{A} \to \AB$ is
\fndefn{coherent}, if 
there exists an $A$-module homomorphism $f :I \to J$ and an element
$\eta \in F(I)$, inducing an exact sequence for any $A$-module $M$:  
\[
\xymatrix{\Hom_A(J,M) \ar[r] & \Hom_A(I,M) \ar[r] & F(M) \ar[r] & 0. }
\]
We refer to the data $(f : I \to J,\eta)$ as a \fndefn{presentation}
for $F$. For a comprehensive account of coherent functors, we refer the
interested reader to \cite{MR0212070}. Some stronger results that are 
available in the noetherian situation are developed in
\cite{MR1656482}.  Here we record some simple consequences of
\cite[Prop.\ 2.1]{MR0212070}.  
\begin{lem}\label{lem:coherent_ab}
  Fix a ring $A$. For each $i=1$, $\dots$, $5$, let $H_i : \MOD{A} \to
  \AB$ be an additive functor fitting into an exact sequence:
  \[
  \xymatrix{H_1 \ar[r] & H_2 \ar[r] & H_3 \ar[r] & H_4 \ar[r] & H_5. }
  \]
  \begin{enumerate}
  \item If $H_2$, $H_4$ are finitely generated, and $H_5$ is coherent,
    then $H_3$ is finitely generated.
  \item If $H_1$, $H_2$ are finitely generated, and $H_4$, $H_5$
    are coherent, then $H_3$ is coherent
  \end{enumerate}
\end{lem}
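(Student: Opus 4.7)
The plan is to split the five-term exact sequence $H_1 \to H_2 \to H_3 \to H_4 \to H_5$ into the short exact sequence
\[
0 \to L \to H_3 \to K \to 0,
\]
where $L := \coker(H_1 \to H_2) = \im(H_2 \to H_3)$ and $K := \ker(H_4 \to H_5) = \im(H_3 \to H_4)$, and then to invoke the structural properties of finitely generated and coherent functors established in \cite[Prop.~2.1]{MR0212070}. The properties I rely on are: the class of coherent functors is closed under kernels, cokernels, and extensions inside the category of additive functors (so it forms an abelian subcategory); the class of finitely generated functors is closed under quotients and extensions; every coherent functor is finitely generated; and the kernel of any morphism from a finitely generated functor $F$ to a coherent functor $G$ is itself finitely generated (this last property follows because a surjection $\Hom_A(P,-) \twoheadrightarrow F$ yields a composite $\Hom_A(P,-) \to G$ whose kernel is coherent, and the desired kernel is a quotient of this).

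For part (1), $L$ is a quotient of the finitely generated $H_2$ and hence finitely generated; $K$ is the kernel of a morphism from the finitely generated $H_4$ to the coherent $H_5$, and so is finitely generated by the property above. The sequence $0 \to L \to H_3 \to K \to 0$ then exhibits $H_3$ as an extension of two finitely generated functors, whence $H_3$ itself is finitely generated.

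For part (2), $K$ is the kernel of a morphism between two coherent functors and is therefore coherent. To see that $L$ is coherent, I pick surjections $\pi_i : \Hom_A(P_i, -) \twoheadrightarrow H_i$ for $i = 1, 2$ (possible since the $H_i$ are finitely generated) and use Yoneda together with surjectivity of $\pi_2$ to lift the morphism $H_1 \to H_2$ to a morphism $\gamma : \Hom_A(P_1, -) \to \Hom_A(P_2, -)$. The cokernel $\coker \gamma$ is coherent by construction and surjects onto $L$; combined with the abelian-category structure on coherent functors given by \cite[Prop.~2.1]{MR0212070}, this yields a coherent presentation of $L$. Finally, $H_3$ sits as an extension of the two coherent functors $L$ and $K$, and such an extension is coherent. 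The main obstacle is verifying the coherent presentation of $L$: this is where the closure of coherent functors under cokernels and extensions, in the precise form of \cite[Prop.~2.1]{MR0212070}, is used most essentially.
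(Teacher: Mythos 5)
Your part (1) is correct. The decomposition into the short exact sequence $0 \to L \to H_3 \to K \to 0$ with $L=\coker(H_1\to H_2)=\im(H_2\to H_3)$ and $K=\im(H_3\to H_4)=\ker(H_4\to H_5)$, together with stability of finitely generated functors under quotients and extensions and your observation that the kernel of a map from a finitely generated functor to a coherent one is finitely generated, is exactly the argument the paper has in mind (the paper records the lemma as a consequence of Auslander's Prop.~2.1 and gives no proof, so there is nothing more specific to compare against).

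Part (2), however, has a genuine gap at the coherence of $L$. A surjection $\coker\gamma \twoheadrightarrow L$ from a coherent functor does not make $L$ coherent: coherence descends to a quotient only when the subfunctor being killed is finitely generated, and the kernel of your map $\coker\gamma \to L$ contains the image of $\ker\pi_2$, which is finitely generated precisely when $H_2$ is coherent rather than merely finitely generated. Moreover, this step cannot be repaired from the stated hypotheses, because statement (2) is false as written. Take any additive functor $F:\MOD{A}\to\AB$ that is finitely generated but not coherent --- for instance $F=-\otimes_A (A/\mathfrak{m})$ with $A=k[x_1,x_2,\dots]$ and $\mathfrak{m}=(x_1,x_2,\dots)$: here $F$ is a quotient of $\Hom_A(A,-)$, so finitely generated, but $\ker(\Hom_A(A,-)\to F)=\mathfrak{m}\cdot(-)$ is not finitely generated, so $F$ cannot be coherent. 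The sequence $0\to F\xrightarrow{\;\mathrm{id}\;}F\to 0\to 0$ is exact, $H_1,H_2$ are finitely generated and $H_4,H_5$ are coherent, yet $H_3=F$ is not coherent. What is true, and what the paper's applications actually use, is the variant in which $L=\coker(H_1\to H_2)$ is coherent --- e.g.\ because $H_2$ is coherent and $H_1$ is finitely generated (quotient of a coherent functor by a finitely generated subfunctor is coherent), or because $L=0$. With that strengthened input your extension argument for $H_3$ goes through verbatim; under the hypotheses as literally stated one can only conclude, as in part (1), that $H_3$ is finitely generated.
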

We now have two fundamental examples. 
\begin{ex}\label{ex:noetherianexalder}
  Fix a scheme $S$ and a locally noetherian algebraic $S$-stack
  $X$. Let $T$ be an affine and noetherian $X$-scheme, which is locally of
  finite type. Then, the functors $M\mapsto \Der_X(T,{M})$
  and $M\mapsto 
  \Exal_X(T,{M})$ are coherent. Indeed, by
  \cite[Thm.\ 1.1]{MR2206635}, 
  there is a bounded above complex of $\Orb_T$-modules $L_{T/X}$, with
  coherent cohomology sheaves, as well as functorial isomorphisms
  $\Der_X(T,{M}) \cong  
  \Ext^0_{\Orb_T}(L_{T/X},M)$ and $\Exal_X(T,{M}) \cong
  \Ext^1_{\Orb_T}(L_{T/X},M)$  for 
  all quasicoherent $\Orb_T$-modules $M$. The claim now follows from
  \cite[Ex.\ 3.13]{hallj_coho_bc}.
\end{ex}
The next example is \cite[Thm.\ C]{hallj_coho_bc}.  
\begin{ex}\label{ex:coh_func_thm}
  Fix an affine scheme $S$ and a morphism of algebraic stacks $f : X
  \to S$ which is separated and locally of finite presentation. Let If
  $\shv{M}$, $\shv{N} \in \QCOH{X}$, with $\shv{N}$ of finite
  presentation, flat over $S$, with support proper over $S$, then for
  all $i\geq 0$ the functor:
  \[
  \Ext^i_{\Orb_X}(\shv{M},\shv{N}\tensor_{\Orb_X} f^*(-)) : \QCOH{S}
  \to \AB
  \]
  is coherent. 
\end{ex}
\section{Automorphisms, deformations, obstructions, and
  composition}\label{sec:def_obs_comp} 
A hypothesis in Theorem \ref{thm:opennessversal} is that the 
functor $M \mapsto \Exal_X(T,M)$ is finitely generated. We have found
the direct verification of this hypothesis to be difficult. In this
section, we provide some exact sequences to remedy this situation. We
also take the opportunity to formalize and relativize obstruction theories. 

Fix a scheme $S$ and a $1$-morphism of $S$-groupoids $\Phi : Y \to
Z$. Define the category $\DEF_\Phi$ to have objects the triples
$(T,J,\eta)$, where $T$ is a $Y$-scheme, $J$ is a
quasicoherent $\Orb_T$-module, and 
$\eta$ is a $Y$-scheme structure on the trivial $Z$-extension of $T$ by
$J$. A morphism $(T,J,\eta) \to (V,K,\xi)$ consists of a $Y$-scheme
morphism $f : T\to V$, a morphism of quasicoherent $\Orb_T$-modules
$f^*K \to J$ such that the induced morphism of trivial $Z$-extensions
$(T\hookrightarrow T\extn{J}) \to (V\hookrightarrow V\extn{K})$ is a
morphism of $Y$-extensions. Graphically, it is the category of
completions of the following diagram: 
\[
\xymatrix{\ar@{_(->}[d]T \ar[r] & Y \ar[d]^{\Phi} \\ 
T\extn{J} \ar@{-->}[ur]^\eta \ar[r] & Z.}
\]
There is a natural functor $\DEF_\Phi \to \SCH{Y} : (T,J,\eta)
\mapsto T$. We denote the fiber of this functor over the
$Y$-scheme $T$ by $\DEF_\Phi(T)$. There is also a functor
$\DEF_\Phi(T)^\opp \to \QCOH{T} : (J,\eta) \mapsto J$. We denote the fiber
of this functor over a quasicoherent $\Orb_T$-module $J$ as
$\DEF_{\Phi}(T,J)$. This category is naturally pointed by the trivial
$Y$-extension of $T$ by $J$. Also, if the $1$-morphism $\Phi$ is fibered in setoids,
then the category $\DEF_\Phi(T,J)$ is discrete. Another observation
is that if $\Phi_T$ denotes the $T$-groupoid $\Phi\times_Z
T$, then the natural functor 
\[
\DEF_\Phi(T,J) \to \DEF_{\Phi_T}(T,J)
\]
is an equivalence. We record for future reference the following
trivial observations.
\begin{lem}\label{lem:def_fet}
  Fix a scheme $S$, $1$-morphisms of $S$-groupoids $X
  \xrightarrow{\Psi} Y \xrightarrow{\Phi} Z$, an $X$-scheme $T$, and a
  quasicoherent $\Orb_T$-module $I$. If 
  the $1$-morphism $\Psi: X \to Y$ is formally \'etale, then the
  natural functor: 
  \[
  \DEF_{\Phi\circ\Psi}(T,I) \to \DEF_{\Phi}(T,I).
  \]
  is an equivalence of categories. 
\end{lem}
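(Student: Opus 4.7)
The plan is to mirror the proof of Lemma \ref{lem:exal_fet}: the crucial input is that $\trvext{T}{I} : T \hookrightarrow T\extn{I}$ is a square-zero, hence locally nilpotent, closed immersion, so that the formally étale hypothesis on $\Psi$ applies directly to the lifting problems that present themselves.

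For essential surjectivity, fix $(I,\eta) \in \DEF_\Phi(T,I)$, so $\eta : \SCH{T\extn{I}} \to Y$ is a $Y$-scheme structure on $T\extn{I}$ whose restriction to $T$ agrees with $\Psi \circ \sigma_T$ (where $\sigma_T : \SCH{T} \to X$ denotes the given $X$-structure) and whose composition with $\Phi$ recovers the trivial $Z$-extension. Apply the formally étale lifting property of $\Psi$ to the commutative square with top edge $\sigma_T$, bottom edge $\eta$, and left edge $\trvext{T}{I}$; this produces a unique $1$-morphism $\tilde\eta : \SCH{T\extn{I}} \to X$ with $\tilde\eta \circ \SCH{\trvext{T}{I}} = \sigma_T$ and $\Psi \tilde\eta = \eta$. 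Since $\Phi \Psi \tilde\eta = \Phi \eta$ is the trivial $Z$-extension, the pair $(I,\tilde\eta)$ is an object of $\DEF_{\Phi\Psi}(T,I)$ mapping to $(I,\eta)$.

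For full faithfulness, a morphism between two objects in either $\DEF_{\Phi\Psi}(T,I)$ or $\DEF_\Phi(T,I)$ is the same underlying datum: an $S$-scheme map $f : T\extn{I} \to T\extn{I}$ which acts as the identity on $T$ and on $I$, subject respectively to $\tilde\eta_2 \circ \SCH{f} = \tilde\eta_1$ or $\Psi\tilde\eta_2 \circ \SCH{f} = \Psi\tilde\eta_1$. Injectivity of the functor on morphism sets is therefore immediate. For surjectivity, given such an $f$ satisfying the $Y$-condition, observe that $\tilde\eta_1$ and $\tilde\eta_2 \circ \SCH{f}$ are both $1$-morphisms $\SCH{T\extn{I}} \to X$, they both restrict to $\sigma_T$ on $T$ (since $f|_T = \mathrm{id}$), and they agree after composition with $\Psi$. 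Uniqueness in the formally étale lift then forces $\tilde\eta_1 = \tilde\eta_2 \circ \SCH{f}$, so $f$ is automatically a morphism in $\DEF_{\Phi\Psi}(T,I)$.

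The only mildly delicate point is bookkeeping the compatibility conditions packaged into the definition of $\DEF_\Phi$; no substantive obstacle arises, and in both steps the argument is precisely the $\DEF$-analogue of the $\EXAL$-argument already used in Lemma \ref{lem:exal_fet}.
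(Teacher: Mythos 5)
Your proof is correct and is essentially the argument the paper intends: the lemma is recorded there without proof as a ``trivial observation,'' and the justification is exactly what you give --- apply the unique-lifting property of the formally \'etale morphism $\Psi$ along the square-zero (hence locally nilpotent) closed immersion $\trvext{T}{I} : T \hookrightarrow T\extn{I}$ for essential surjectivity, and invoke uniqueness of that lift for full faithfulness. The only cosmetic caveat is that a morphism in the fiber category $\DEF_{\Phi}(T,I)$ lies over $\ID{T}$ and $\ID{I}$ and carries $2$-morphism data (a compatible $2$-morphism $\eta_1 \Rightarrow \eta_2$) rather than being an arbitrary scheme endomorphism of $T\extn{I}$, but this bookkeeping does not affect your argument, which is at the same level of (strict) rigor as the paper's own definition of formally \'etale.
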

\begin{lem}\label{lem:def_Phomog}
  Fix a scheme $S$, a class of morphisms $P \subset \HA$,
  a $1$-morphism of $P$-homogeneous $S$-groupoids $\Phi : Y \to Z$, a
  morphism of $Y$-schemes $p : U \to V$ where $p\in P$, and $K \in
  \QCOH{U}$. Then, the natural functor:
  \[
  \DEF_{\Phi}(V,p_*K) \to \DEF_\Phi(U,K),
  \]
  is an equivalence of categories. 
\end{lem}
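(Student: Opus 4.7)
The plan is to realize $V\extn{p_*K}$ as a pushout in several scheme categories simultaneously, and then read off the equivalence from the resulting universal properties.

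The key observation is that the square
\[
\xymatrix@-0.8pc{U \ar@{^(->}[r]\ar[d]_p & U\extn{K}\ar[d] \\ V \ar@{^(->}[r] & V\extn{p_*K}}
\]
is cocartesian in the category of $S$-schemes. Indeed, applying Lemma \ref{lem:homog_pushouts_int} to the trivial $S$-extension $U \hookrightarrow U\extn{K}$ and the affine morphism $p$ computes the structure sheaf of the pushout as $\Orb_V \times_{p_*\Orb_U} p_*(\Orb_U \oplus K)$, and a direct calculation identifies this $\Orb_V$-algebra with $\Orb_V \oplus p_*K = \Orb_V\extn{p_*K}$.

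For essential surjectivity, I would start with $(K,\eta) \in \DEF_\Phi(U,K)$, so that $U \hookrightarrow U\extn{K}$ is a $Y$-extension. Since $Y$ is $P$-homogeneous and $p \in P$, a second application of Lemma \ref{lem:homog_pushouts_int}, this time inside $\SCH{Y}$, produces a cocartesian square whose underlying $S$-scheme diagram coincides with the one displayed above, endowing $V\extn{p_*K}$ with a $Y$-scheme structure $\xi$. Because $Z$ is also $P$-homogeneous, Lemma \ref{lem:homog_prop}\eqref{lem:homog_prop:item:univ} ensures that the same square is cocartesian in $\SCH{Z}$, forcing the induced $Z$-structure on $V\extn{p_*K}$ to agree with the trivial $Z$-extension (both satisfy the same universal property in $Z$-schemes). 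This produces the required preimage $(p_*K,\xi) \in \DEF_\Phi(V,p_*K)$.

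Full faithfulness is then a direct consequence of the pushout property in $\SCH{Y}$: a morphism $\xi_1 \to \xi_2$ in $\DEF_\Phi(V,p_*K)$ amounts to a $Y$-morphism $V\extn{p_*K} \to V\extn{p_*K}$ intertwining $\xi_1$ and $\xi_2$, restricting to $\ID{V}$ on $V$ and to $\ID{p_*K}$ on the ideal, and by the universal property such a morphism is determined uniquely by its composition with the canonical $Y$-morphism $U\extn{K} \to V\extn{p_*K}$, which is exactly the data of a morphism in $\DEF_\Phi(U,K)$. The main obstacle is really only the bookkeeping across the three categories $\SCH{S}$, $\SCH{Y}$, $\SCH{Z}$; the decisive input is the combined $P$-homogeneity of both $Y$ and $Z$, which is precisely what allows the pushout property to transfer cleanly between all the relevant settings and identify the various induced structures.
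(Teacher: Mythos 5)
Your argument is correct and is exactly the intended one: the paper states this lemma without proof (as a ``trivial observation''), and your identification of $V\extn{p_*K}$ as the pushout of $[V \xleftarrow{p} U \hookrightarrow U\extn{K}]$ simultaneously in $\SCH{S}$, $\SCH{Y}$, and $\SCH{Z}$ via Lemma \ref{lem:homog_pushouts_int} and Lemma \ref{lem:homog_prop}\eqref{lem:homog_prop:item:univ} is the same mechanism the paper uses for the analogous statement about $\EXAL$ in Corollary \ref{cor:et_loc_exal}. No gaps beyond the $2$-categorical bookkeeping you already flag.
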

The proof of the next result is similar to Proposition
\ref{prop:exalmod}, thus is omitted. 
\begin{prop}\label{prop:def_pic}
  Fix a scheme $S$, a $1$-morphism of $\HNIL$-homogeneous $S$-groupoids
  $\Phi : Y \to Z$, a $Y$-scheme $T$, and a quasicoherent
  $\Orb_T$-module $J$. Then the 
  category $\DEF_\Phi(T,J)$ admits a natural structure as a Picard
  category. 
\end{prop}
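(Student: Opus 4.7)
The plan is to follow the strategy of Proposition \ref{prop:exalmod} essentially verbatim. First, I would show that the forgetful functor $\DEF_\Phi(T) \to \QCOH{T}^\opp$, $(J,\eta) \mapsto J$, is a fibration in groupoids. Given $\alpha : I \to J$ in $\QCOH{T}$ and $(I,\eta_I) \in \DEF_\Phi(T,I)$, the construction used in Proposition \ref{prop:exalmod} applied to $Y$ itself produces a $Y$-extension of $T$ by $J$ whose underlying scheme is canonically $T\extn{J}$; a direct calculation with $\Orb_T$-algebras---using the canonical identification $(\Orb_T \oplus I) \oplus_I J \cong \Orb_T \oplus J$---shows that the induced $Z$-structure on $T\extn{J}$ is precisely the trivial $Z$-extension structure. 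This provides the required cartesian lift in $\DEF_\Phi(T)$. That the fibers are groupoids follows from the Snake Lemma, exactly as for $\EXAL$ in Section \ref{sec:extensions}.

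Second, to endow each $\DEF_\Phi(T,J)$ with a Picard category structure, it suffices by \cite[\S1.2]{MR0241495} to verify that for all $M$, $N \in \QCOH{T}$ the natural functor
\[
\pi_{M,N} : \DEF_\Phi(T, M \times N) \to \DEF_\Phi(T,M) \times \DEF_\Phi(T,N)
\]
(induced by the fibration and the projections $M \times N \to M$, $M \times N \to N$) is an equivalence of categories. For essential surjectivity, fix $Y$-scheme structures $\eta_M$ on $T\extn{M}$ and $\eta_N$ on $T\extn{N}$ lifting the trivial $Z$-extensions. The closed immersions $T \hookrightarrow T\extn{M}$ and $T \hookrightarrow T\extn{N}$ are square-zero, hence lie in $\HNIL$. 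The $\HNIL$-homogeneity of $Y$, combined with Lemma \ref{lem:homog_pushouts_int}, produces a cocartesian diagram of $Y$-schemes whose underlying $S$-scheme is $T\extn{M} \cup_T T\extn{N} \cong T\extn{M \times N}$. By Lemma \ref{lem:homog_prop}\itemref{lem:homog_prop:item:univ}, applied to $\Phi : Y \to Z$ using that $Z$ is $\HNIL$-homogeneous, this square is also cocartesian in $\SCH{Z}$, and comparison with the pushout of the two trivial $Z$-extensions (given by Lemma \ref{lem:homog_pushouts_int} applied to $Z$) identifies the induced $Z$-structure with the trivial $Z$-extension on $T\extn{M \times N}$. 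Hence we obtain an object of $\DEF_\Phi(T, M \times N)$ whose image under $\pi_{M,N}$ is $(\eta_M, \eta_N)$. Full faithfulness reduces to the analogous universal property applied to morphisms in the fibers.

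The main obstacle I anticipate is purely bookkeeping: ensuring that the canonical identifications (both of the pushouts of trivial extensions and of the underlying sheaves of rings) are functorial in the module, so that the Picard structure on $\DEF_\Phi(T,J)$ is well-defined up to unique natural isomorphism. The zero object is visibly the trivial $Y$-scheme structure on $T\extn{J}$ obtained by composing the canonical retraction $\trvret{T}{J} : T\extn{J} \to T$ with the given $Y$-scheme structure on $T$.
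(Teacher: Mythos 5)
Your proposal is correct and matches the paper's intended argument: the paper omits the proof of Proposition \ref{prop:def_pic}, stating only that it is ``similar to Proposition \ref{prop:exalmod},'' and your write-up is precisely that adaptation (pushforward along $\alpha$ using $(\Orb_T\oplus I)\oplus_I J\cong\Orb_T\oplus J$ for the fibration structure, and the $\HNIL$-pushout of Lemma \ref{lem:homog_pushouts_int} together with Lemma \ref{lem:homog_prop}\itemref{lem:homog_prop:item:univ} for the equivalence $\pi_{M,N}$). The bookkeeping you flag---canonically identifying the induced $Z$-structure on the pushout with the trivial $Z$-extension so that one genuinely lands in $\DEF_\Phi(T,M\times N)$---is exactly the point that needs care, and you handle it correctly.
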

Denote the set of isomorphism classes of the Picard category
$\DEF_\Phi(T,J)$ by $\Def_\Phi(T,J)$. Thus, by Proposition
\ref{prop:def_pic}, we obtain functors:
\begin{align*}
  \Def_\Phi(T,-) &: \QCOH{T} \to \AB : J \mapsto
  \Def_\Phi(T,J)\\
  \Aut_\Phi(T,-) &: \QCOH{T} \to \AB : J \mapsto
  \Aut_{\DEF_\Phi(T,J)}(\trvext{T}{J}).  
\end{align*}
The proof of the next result is similar to Corollary
\ref{cor:6term}. We will not be using this result, however, so we omit
the proof.
\begin{cor}\label{cor:6term_def}
  Fix a scheme $S$, a $1$-morphism of $\HrNIL$-homogeneous $S$-groupoids
  $\Phi: Y \to Z$, and a $Y$-scheme
  $T$. Then, for each short exact sequence in $\QCOH{T}$:
  \[
  \xymatrix{0 \ar[r] & K \ar[r] & M \ar[r] & C \ar[r] & 0}
  \]
  there is a natural exact sequence of abelian groups:
  \[
  \xymatrix{0 \ar[r] & \Aut_{\Phi}(T,K) \ar[r] &
    \Aut_{\Phi}(T,M) \ar[r] &
    \Aut_{\Phi}(T,C) \ar `[r] `[l] `[dlll] `[d] [dll] &  &\\
    & \Def_{\Phi}(T,K) \ar[r] & \Def_{\Phi}(T,M)
    \ar[r] & \Def_{\Phi}(T,C).}
  \]
\end{cor}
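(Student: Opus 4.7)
The plan is to follow the template established in the proof of Corollary \ref{cor:6term}, substituting $\DEF_\Phi$ for $\EXAL_X$ throughout. By Proposition \ref{prop:def_pic}, $\DEF_\Phi(T,J)$ carries a Picard category structure, and the underlying fibration $\DEF_\Phi(T) \to \QCOH{T}^\opp$ admits pushouts along arbitrary $\Orb_T$-module morphisms (requiring only $\HNIL$-homogeneity, which $\HrNIL$-homogeneity implies); in particular the short exact sequence $0 \to K \to M \to C \to 0$ induces both horizontal strings $\Def_\Phi(T,K) \to \Def_\Phi(T,M) \to \Def_\Phi(T,C)$ and $\Aut_\Phi(T,K) \to \Aut_\Phi(T,M) \to \Aut_\Phi(T,C)$.

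First I would establish an analogue of Lemma \ref{lem:exal_secs} in this setting: a morphism in $\DEF_\Phi(T,J)$ from an object $\eta$ to the trivial object $\trvext{T}{J}$ corresponds bijectively to a $Y$-retraction of the underlying $Y$-scheme morphism $T \hookrightarrow (T\extn{J},\eta)$. This follows directly from unpacking the definition of $\DEF_\Phi$ and the trivial extension, together with Lemma \ref{lem:exal_secs} applied in $\EXAL_Y(T,J)$. In particular, $[\eta] = 0$ in $\Def_\Phi(T,J)$ if and only if such a $Y$-retraction exists, and $\Aut_\Phi(T,J)$ is identified with the $Y$-retractions of $\trvext{T}{J}$.

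Next, I would prove exactness at $\Def_\Phi(T,M)$ exactly as in Corollary \ref{cor:6term}. Given $[\eta_M] \in \Def_\Phi(T,M)$ whose pushout $[\eta_C] \in \Def_\Phi(T,C)$ is trivial, the $Y$-retraction $r \colon (T\extn{C},\eta_C) \to T$ supplied by the previous step produces a $Y$-morphism from $T\extn{C}$ to $(T\extn{M},\eta_M)$ realizing $T\extn{C} \hookrightarrow (T\extn{M},\eta_M)$ as a $Y$-extension of $T\extn{C}$ by $K$. Applying Lemma \ref{lem:homog_pushouts_int} (using $\HrNIL$-homogeneity via the retraction $r$ together with the square-zero closed immersion $T \hookrightarrow T\extn{C}$) yields a cocartesian diagram in $\SCH{Y}$ whose vertex is a $Y$-extension of $T$ by $K$ mapping to $[\eta_M]$. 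An essentially identical argument, with closed immersions replaced by pairs of morphisms related by inclusions of structure sheaves, handles exactness at $\Aut_\Phi(T,M)$.

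The main obstacle is constructing the connecting homomorphism $\partial \colon \Aut_\Phi(T,C) \to \Def_\Phi(T,K)$ and verifying exactness at $\Aut_\Phi(T,C)$ and $\Def_\Phi(T,K)$. The idea is dual to the previous step: given a $Y$-automorphism $\sigma$ of $\trvext{T}{C}$, lift the underlying morphism of $Z$-schemes to a $Z$-endomorphism of $T\extn{M}$ (which exists because $M \twoheadrightarrow C$ splits locally as $\Orb_T$-modules, and the abelian structure on $\EXAL_Z$ is used to reduce to the trivial extension), then measure the failure of this lift to be a $Y$-automorphism as a class in $\Def_\Phi(T,K)$, using that the kernel $K$ of $M \to C$ parametrises the ambiguity. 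Well-definedness on isomorphism classes, and exactness at the two middle nodes, then follow from Snake Lemma diagram chases in the style of \cite[${0}_{\text{IV}}$.20.2.2-3]{EGA}, exactly as in Corollary \ref{cor:6term}; the $\HrNIL$-homogeneity hypothesis enters through the pushouts needed to rigidify these constructions.
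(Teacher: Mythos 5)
The paper does not actually prove this corollary: it records only that the argument is ``similar to Corollary \ref{cor:6term}'', and the proof of that corollary itself establishes exactness only at the fifth term, deferring the full statement to \cite[Prop.~2.3(iv)]{2011arXiv1111.4200W}. So your proposal is measured against that template. Your transposition of the template to get exactness at $\Def_\Phi(T,M)$ is essentially correct, but the retraction criterion you extract from Lemma \ref{lem:exal_secs} needs a refinement that matters: $[\eta]=0$ in $\Def_\Phi(T,J)$ is equivalent to the existence of a $Y$-retraction of $(T\extn{J},\eta)$ \emph{whose underlying $Z$-morphism is the standard retraction} $\trvret{T}{J}$, i.e.\ to an isomorphism $\eta\cong \trvext{T}{J}$ in $\DEF_\Phi(T,J)$ rather than merely in $\EXAL_Y(T,J)$. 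Without this, the pushout of $[T \xleftarrow{r} (T\extn{C},\eta_C) \hookrightarrow (T\extn{M},\eta_M)]$ has as underlying $Z$-extension the pushout along the underlying $Z$-morphism of $r$, which need not be the trivial extension $T\extn{K}$, so the result need not define an object of $\DEF_\Phi(T,K)$ at all. With the refinement, your argument for the middle exactness goes through.

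The construction of the connecting homomorphism, however, fails as described. By definition of the fiber category $\DEF_\Phi(T,C)$, an element $\sigma\in\Aut_\Phi(T,C)=\Aut_{\DEF_\Phi(T,C)}(\trvext{T}{C})$ is a $2$-morphism lying over the \emph{identity} $1$-morphism of the $Z$-scheme $T\extn{C}$ (and restricting to the identity on $T$). Its ``underlying morphism of $Z$-schemes'' is therefore $\ID{T\extn{C}}$; there is nothing to lift via a local splitting of $M\twoheadrightarrow C$, the evident lift $\ID{T\extn{M}}$ is already a $Y$-automorphism of the trivial object of $\DEF_\Phi(T,M)$, and your recipe outputs the zero map---whereas $\partial$ is genuinely nonzero in examples (e.g.\ it becomes $\Hom_{\Orb_{X_T}}(\shv{M},\shv{M}\tensor f_T^*C)\to\Ext^1_{\Orb_{X_T}}(\shv{M},\shv{M}\tensor f_T^*K)$ for the stack of coherent sheaves). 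The lifting problem that $\partial$ measures is for the $2$-morphism $\sigma$, not for the underlying scheme morphism: $\sigma$ should be used as a twisting datum, converting the standard $Y$-retraction of $T\extn{C}$ (with its trivial $Y$-structure) into a second $Y$-retraction $r_\sigma$ with the same underlying $Z$-morphism $\trvret{T}{C}$, and $\partial[\sigma]$ is the class in $\Def_\Phi(T,K)$ of the pushout of $[T\xleftarrow{r_\sigma}T\extn{C}\hookrightarrow T\extn{M}]$ (equivalently, identify $\DEF_\Phi(T,K)$ with the fiber of $\DEF_\Phi(T,M)\to\DEF_\Phi(T,C)$ over the trivial object and set $\partial(\sigma)=(\trvext{T}{M},\sigma)$). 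Only with $\partial$ set up this way do the exactness claims at $\Aut_\Phi(T,C)$ and $\Def_\Phi(T,K)$ admit the diagram chases you invoke.
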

We now have a simple result whose proof we leave to the
conscientious reader.
\begin{prop}\label{prop:derdefseq}
  Fix a scheme $S$, a $1$-morphism of $\HNIL$-homogeneous $S$-groupoids
  $\Phi : Y \to Z$, a $Y$-scheme $T$, and a quasicoherent
  $\Orb_T$-module $J$. Then, 
  there is a natural exact sequence of abelian groups:
    \[
    \xymatrix{0 \ar[r] & \Aut_{\Phi}(T,J) \ar[r] &
    \Der_{Y}(T,J) \ar[r] &
    \Der_{Z}(T,J) \ar `[r] `[l] `[dlll] `[d] [dll] &  &\\
    & \Def_{\Phi}(T,J) \ar[r] & \Exal_{Y}(T,J)
    \ar[r] & \Exal_{Z}(T,J).}
  \]
\end{prop}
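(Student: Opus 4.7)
The strategy is to realize the six-term sequence as the long exact sequence of homotopy groups of a fibration of Picard categories. The underlying-$Z$-extension construction gives a morphism of Picard categories
\[
\pi : \EXAL_Y(T,J) \to \EXAL_Z(T,J),
\]
while sending $(T,J,\eta) \in \DEF_\Phi(T,J)$ to the $Y$-extension $(T \hookrightarrow T\extn{J}, \eta)$ defines a functor $\iota : \DEF_\Phi(T,J) \to \EXAL_Y(T,J)$. Unwinding the definitions of \S\ref{sec:def_obs_comp} identifies $\iota$ with the inclusion of the essential fiber of $\pi$ over the trivial $Z$-extension, and the Picard structure on $\DEF_\Phi(T,J)$ supplied by Proposition \ref{prop:def_pic} agrees with that of the fiber.

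Granting this, the maps $\Aut_\Phi \to \Der_Y \to \Der_Z$ (respectively $\Def_\Phi \to \Exal_Y \to \Exal_Z$) are simply those induced by $\iota$ then $\pi$ on automorphism groups of the neutral object (respectively on sets of isomorphism classes), where Lemma \ref{lem:exal_secs} identifies $\Der_{(-)}(T,J)$ with the automorphism group of $\trvext{T}{J}$ in $\EXAL_{(-)}(T,J)$. The remaining connecting map $\partial : \Der_Z(T,J) \to \Def_\Phi(T,J)$ is defined by twisting: a $Z$-retraction $r: T\extn{J} \to T$ corresponds to an automorphism $\sigma_r$ of the trivial $Z$-extension (Lemma \ref{lem:exal_secs}), and pulling the canonical $Y$-scheme structure $\eta_0$ on $T\extn{J}$ back along $\sigma_r$ yields a new $Y$-scheme structure $\eta_r$; set $\partial(r) = [\eta_r]$.

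Exactness at each of the five interior spots is then a short diagram chase. At $\Der_Y$ and $\Exal_Y$ it is immediate from the essential fiber description. At $\Der_Z$, the equality $\partial(r) = 0$ says (by Lemma \ref{lem:exal_secs}) that $\eta_r$ and $\eta_0$ agree up to $Y$-automorphism, equivalently that $r$ lifts to a $Y$-retraction. At $\Def_\Phi$, an element $[\eta]$ dies in $\Exal_Y$ iff $(T \hookrightarrow T\extn{J}, \eta)$ admits a $Y$-retraction, again by Lemma \ref{lem:exal_secs}; composing this $Y$-retraction with the canonical $Z$-retraction of $T\extn{J}$ produces a $Z$-retraction whose image under $\partial$ is $[\eta]$. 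Injectivity of $\Aut_\Phi \to \Der_Y$ is formal from the faithfulness of the forgetful functor on automorphisms.

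The main obstacle is verifying that $\partial$ is well-defined and a group homomorphism; this reduces to a compatibility between the Picard structures of Propositions \ref{prop:exalmod} and \ref{prop:def_pic}, and follows from the fact that $\pi$ is a morphism of Picard categories with $\iota$ realizing $\DEF_\Phi(T,J)$ as its essential fiber. With $\partial$ in place, the remaining verifications are routine manipulations with square-zero extensions and retractions, which is why the statement is relegated to the reader.
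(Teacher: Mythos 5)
Your argument is correct, and it is exactly the proof the paper intends but leaves ``to the conscientious reader'': the sequence is the six-term homotopy-fiber sequence of the forgetful morphism of Picard categories $\EXAL_Y(T,J)\to\EXAL_Z(T,J)$, with $\DEF_\Phi(T,J)$ identified (via its very definition and Lemma \ref{lem:exal_secs}) as the essential fiber over the trivial $Z$-extension, and with Propositions \ref{prop:exalmod} and \ref{prop:def_pic} supplying the compatible additive structures that make $\partial$ and the remaining maps homomorphisms. Your descriptions of the connecting map by twisting the trivial $Y$-structure along a $Z$-retraction, and the exactness checks at each spot, are all sound.
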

We now introduce multi-step relative
obstruction theories. For single-step obstruction theories, this
definition is similar to \cite[2.6]{MR0399094} and
\cite[A.10]{MR2097359}.  
\begin{defn}
  Fix a scheme $S$, a $1$-morphism of $\HNIL$-homogeneous $S$-groupoids
  $\Phi : Y \to Z$, and an integer $n\geq 1$. For a $Y$-scheme $T$, an 
  \fndefn{$n$-step relative obstruction theory for $\Phi$ at $T$}
  is a sequence of additive functors (the 
  obstruction spaces): 
  \[
  \mathrm{O}^i(T,-) : \QCOH{T} \to \AB : J \mapsto
  \mathrm{O}^i(T,J) \quad i=1,\dots,n
  \]
  as well as natural transformations of functors (the obstruction maps):
  \begin{align*}
    \mathrm{o}^1(T,-) &: \Exal_Z(T,-) \Rightarrow
    \mathrm{O}^1(T,-)  \\ 
    \mathrm{o}^i(T,-) &: \ker \mathrm{o}^{i-1}(T,-)
    \Rightarrow \mathrm{O}^i(T,-) \quad\mbox{for $i=2$, $\dots$, $n$},
  \end{align*}  
  such that the natural transformation of functors:
  \[
  \Exal_Y(T,-) \Rightarrow \Exal_Z(T,-)
  \]
  has image $\ker \mathrm{o}^n(T,-)$. For an \emph{affine} $Y$-scheme
  $T$, an $n$-step relative obstruction theory at $T$ is  
  \fndefn{coherent} if the functors
  $\{\mathrm{O}^i(T,-)\}_{i=1}^n$ are all  
  coherent.
\end{defn}
We feel that it is important to point out that simply taking the
cokernel of the last morphism in the exact sequence of Proposition
\ref{prop:derdefseq} produces a $1$-step relative
obstruction theory, which we
denote as $(\obs_{\Phi}, {\Obs}_\Phi)$, and call the \fndefn{minimal}
relative obstruction theory. This obstruction theory
generalizes to the relative setting the minimal obstruction theory described in 
\cite{MR638811}. In practice, the minimal obstruction theory is 
a difficult object to explicitly describe. Now, combining Lemmata
\ref{lem:def_fet} and \ref{lem:exal_fet} we obtain 
\begin{lem}\label{lem:obs_fet}
  Fix a scheme $S$, $1$-morphisms of $\HNIL$-homogeneous $S$-groupoids
  $X \xrightarrow{\Psi} Y \xrightarrow{\Phi} Z$, an $X$-scheme $T$, and a
  quasicoherent $\Orb_T$-module $I$. If $\Psi$ is formally \'etale,
  then any $n$-step relative obstruction theory for $\Phi$ at $T$ lifts
  to an $n$-step relative obstruction theory for $\Phi\circ \Psi$ with
  the same obstruction spaces.
\end{lem}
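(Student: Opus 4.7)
The plan is to take the ``same'' obstruction theory. Given an $n$-step relative obstruction theory $(\mathrm{o}^i(T,-),\mathrm{O}^i(T,-))_{i=1}^n$ for $\Phi : Y \to Z$ at $T$, I would propose the identical sequence of functors and natural transformations as an $n$-step relative obstruction theory for $\Phi \circ \Psi : X \to Z$ at $T$. The obstruction spaces $\mathrm{O}^i(T,-)$ and the obstruction maps $\mathrm{o}^i(T,-)$ involve only $\Exal_Z(T,-)$ and the kernels of previous $\mathrm{o}^j$, so they make sense verbatim for any $1$-morphism of $S$-groupoids into $Z$. The only condition that needs re-verification is the final axiom: that the image of the natural transformation $\Exal_X(T,-) \Rightarrow \Exal_Z(T,-)$ coincides with $\ker \mathrm{o}^n(T,-)$.

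The key observation is that, by functoriality of the $\EXAL$ construction, the map $\Exal_X(T,-) \to \Exal_Z(T,-)$ factors as
\[
\Exal_X(T,-) \longrightarrow \Exal_Y(T,-) \longrightarrow \Exal_Z(T,-).
\]
Since $\Psi$ is formally \'etale, Lemma \ref{lem:exal_fet} ensures that $\EXAL_X(T,I) \to \EXAL_Y(T,I)$ is an equivalence of Picard categories for every $I \in \QCOH{T}$. Passing to isomorphism classes yields a natural isomorphism $\Exal_X(T,-) \xrightarrow{\sim} \Exal_Y(T,-)$ compatible with the maps to $\Exal_Z(T,-)$. Consequently, the image of $\Exal_X(T,-) \to \Exal_Z(T,-)$ agrees with the image of $\Exal_Y(T,-) \to \Exal_Z(T,-)$, which by hypothesis equals $\ker \mathrm{o}^n(T,-)$.

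Since the obstruction spaces are reused verbatim, the lifted theory is automatically coherent whenever the original one is. There is no real obstacle: the whole verification reduces to Lemma \ref{lem:exal_fet}. Lemma \ref{lem:def_fet} is the parallel statement for the $\DEF$ functors and is the natural companion ingredient for transporting deformation-side data (such as the minimal obstruction theory) along formally \'etale $1$-morphisms, but it is not strictly required for the image identification at hand.
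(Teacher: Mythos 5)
Your proposal is correct and matches the paper's (unwritten) argument: the paper simply asserts the lemma follows by combining Lemmata \ref{lem:def_fet} and \ref{lem:exal_fet}, and the substantive point in both cases is that the formally \'etale hypothesis makes $\Exal_X(T,-)\to\Exal_Y(T,-)$ an isomorphism compatible with the maps to $\Exal_Z(T,-)$, so the image condition defining $\ker\mathrm{o}^n(T,-)$ transfers verbatim. Your observation that Lemma \ref{lem:def_fet} is not strictly needed here is accurate, since the definition of an $n$-step obstruction theory refers only to the $\Exal$ functors.
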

What follows is an immediate consequence of Proposition  
\ref{prop:derdefseq} and Lemma \ref{lem:coherent_ab}.
\begin{cor}\label{cor:derdefseq}
  Fix a scheme $S$, a $1$-morphism of $\HNIL$-homogeneous
  $S$-groupoids $\Phi : Y \to Z$, an affine
  $Y$-scheme $T$, and an integer $n\geq 1$. Suppose there exists a
  coherent $n$-step relative obstruction theory at $T$.
  \begin{enumerate}
  \item If the functor $M\mapsto \Exal_Z(T,{M})$ is finitely
    generated, then the minimal obstruction theory 
    $(\obs_\Phi,{\Obs}_\Phi)$ is
    coherent at $T$. 
  \item If the functors $M \mapsto \Def_\Phi(T,M)$,
    $\Exal_Z(T,M)$  
    are finitely generated, then the functor $M \mapsto
    \Exal_Y(T,M)$ 
    is finitely generated.
  \end{enumerate}
\end{cor}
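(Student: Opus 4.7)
The plan is to first prove (1); part (2) will then follow immediately from Proposition \ref{prop:derdefseq} and (1). Write $K_0 = \Exal_Z(T,-)$ and $K_i = \ker \mathrm{o}^i(T,-)$ for $1 \le i \le n$. By the definition of an $n$-step obstruction theory, these form a decreasing chain $K_0 \supseteq K_1 \supseteq \cdots \supseteq K_n$ in which each quotient $K_{i-1}/K_i$ embeds into the coherent functor $\mathrm{O}^i(T,-)$ via $\mathrm{o}^i$, and $K_n$ coincides with the image of $\Exal_Y(T,-) \to \Exal_Z(T,-)$; in particular, $\Obs_\Phi(T,-) = K_0/K_n$.

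The central step for (1) is an induction on $i$ showing that $K_i$ is finitely generated and, for $i \ge 1$, that $K_{i-1}/K_i$ is coherent. The base case $i=0$ is the standing hypothesis on $\Exal_Z(T,-)$. For the inductive step, given $K_{i-1}$ finitely generated, I would let $Q_i = \coker(\mathrm{o}^i \colon K_{i-1} \to \mathrm{O}^i)$ and apply Lemma \ref{lem:coherent_ab}(2) to the five-term exact sequence $K_{i-1} \to \mathrm{O}^i \to Q_i \to 0 \to 0$ to conclude that $Q_i$ is coherent; next apply it to $0 \to 0 \to K_{i-1}/K_i \to \mathrm{O}^i \to Q_i$ to conclude that $K_{i-1}/K_i$ is coherent; and finally apply Lemma \ref{lem:coherent_ab}(1) to $0 \to 0 \to K_i \to K_{i-1} \to K_{i-1}/K_i$ to conclude that $K_i$ is finitely generated. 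To finish (1), I would walk up the filtration $0 = K_n/K_n \subseteq K_{n-1}/K_n \subseteq \cdots \subseteq K_0/K_n = \Obs_\Phi(T,-)$, applying Lemma \ref{lem:coherent_ab}(2) to each short exact sequence $0 \to K_i/K_n \to K_{i-1}/K_n \to K_{i-1}/K_i \to 0$ (padded by zeros) so that coherence propagates one quotient at a time.

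For (2), Proposition \ref{prop:derdefseq} together with the definition $\Obs_\Phi(T,-) = \coker(\Exal_Y(T,-) \to \Exal_Z(T,-))$ yields the five-term exact sequence $\Der_Z(T,-) \to \Def_\Phi(T,-) \to \Exal_Y(T,-) \to \Exal_Z(T,-) \to \Obs_\Phi(T,-)$. Since $\Def_\Phi(T,-)$ and $\Exal_Z(T,-)$ are finitely generated by hypothesis and $\Obs_\Phi(T,-)$ is coherent by (1), Lemma \ref{lem:coherent_ab}(1) forces $\Exal_Y(T,-)$ to be finitely generated.

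The main obstacle is not conceptual but organizational: each invocation of Lemma \ref{lem:coherent_ab} must place the functor of interest in the middle position of a correctly exact five-term sequence, which requires some careful padding by zero functors. Once the filtration by the $K_i$ is fixed and one keeps track of which end of the sequence is being controlled (finite generation versus coherence), the argument becomes mechanical.
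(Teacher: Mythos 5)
Your argument is correct and is precisely the one the paper intends: the paper omits the proof entirely, stating the corollary is ``an immediate consequence of Proposition \ref{prop:derdefseq} and Lemma \ref{lem:coherent_ab},'' and your filtration by the kernels $K_i = \ker \mathrm{o}^i(T,-)$ together with the repeated (correctly padded) applications of Lemma \ref{lem:coherent_ab} is exactly how that deduction goes. Nothing to add.
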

This next result summarizes, in the conventions of this paper, some
well-known results from the literature. As can be seen, the relative
situation is clarifying. The result that follows also shows the
stability of the conditions of Theorem \ref{mainthms:repcrit2} under
composition, in the sense of J. Starr \cite{starr-2006}. 
\begin{prop}\label{prop:defexactseq}
  Fix a scheme $S$ and $1$-morphisms of $\HNIL$-homogeneous $S$-groupoids
  $X \xrightarrow{\Psi} Y \xrightarrow{\Phi} Z$, an $X$-scheme $T$, and a
  quasicoherent $\Orb_T$-module $I$.  
  \begin{enumerate}
  \item \label{prop:defexactseq:item:6term} There is a natural $9$-term
    exact sequence of abelian groups:
    \[
    \xymatrix{0 \ar[r] & \Aut_{\Psi}(T,I) \ar[r] &
      \Aut_{\Phi\circ\Psi}(T,I) \ar[r] &
      \Aut_{\Phi}(T,I) \ar `[r] `[l] `[dlll] `[d] [dll] &  \\
      & \Def_{\Psi}(T,I) \ar[r] & \Def_{\Phi\circ \Psi}(T,I)
      \ar[r] & \Def_{\Phi}(T,I)  \ar `[r] `[l] `[dlll] `[d] [dll] &  \\
      & {\Obs}_\Psi(T,I)\ar[r] & {\Obs}_{\Phi\circ
        \Psi} (T,I) \ar[r] & {\Obs}_\Phi(T,I) \ar[r] & 0.
    } 
    \]
  \item\label{prop:defexactseq:item:autdefsobs} There are natural
    isomorphisms of abelian groups:
    \[
    \Aut_\Psi(T,I) \to \Def_{\Delta_\Psi}(T,I) \quad \mbox{and} \quad 
    \Def_\Psi(T,I) \to \Obs_{\Delta_\Psi}(T,I).
    \]
    In particular, we may realize the functor $I\mapsto \Def_\Psi(T,I)$ as a
    $1$-step relative obstruction theory for the $1$-morphism $\Delta_\Psi$.
  \item \label{prop:defexactseq:item:obs_bc} Fix a $\HNIL$-homogeneous
    $1$-morphism of $S$-groupoids $W\to Y$, 
    an $X_W$-scheme $U$, and a quasicoherent $\Orb_U$-module $J$. Then
    there is a natural injection 
    \[
    \Obs_{\Psi_W}(U,J) \subset
    \Obs_\Psi(U,J).
    \]
    In particular, we may realize the functor $J\mapsto \Obs_\Psi(U,J)$
    as a $1$-step relative obstruction theory for the $1$-morphism
    $\Psi_W : X_W \to W$. 
  \end{enumerate}
\end{prop}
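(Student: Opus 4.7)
I would tackle the three parts in the order \itemref{prop:defexactseq:item:autdefsobs}, \itemref{prop:defexactseq:item:6term}, \itemref{prop:defexactseq:item:obs_bc}: the first reduces to a direct unpacking of definitions, the second is the main diagram chase, and the third follows from the fibre-product description of $X_W$.

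For \itemref{prop:defexactseq:item:autdefsobs}, I would unpack $\DEF_{\Delta_\Psi}(T,I)$. Writing $\xi$ for the $X$-structure on $T$, an object is an $X$-structure $\eta$ on $T\extn{I}$ extending $\xi$ together with a $2$-isomorphism $\Delta_\Psi\circ\eta \cong \Delta_\Psi(\xi)\circ \trvret{T}{I}$ in $(X\times_Y X)(T\extn{I})$, i.e.\ a pair of $X$-isomorphisms $\alpha_1,\alpha_2 : \eta \Rightarrow \xi\circ\trvret{T}{I}$ whose images under $\Psi$ coincide. Every isomorphism class admits the canonical representative $\eta = \xi\circ\trvret{T}{I}$, $\alpha_1 = \mathrm{id}$, in which the class is determined by $\alpha_2\alpha_1^{-1} \in \Aut_\Psi(T,I)$; additivity comes from Proposition \ref{prop:def_pic}. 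The identification $\Def_\Psi(T,I) \cong \Obs_{\Delta_\Psi}(T,I)$ is obtained by the Baer-difference map: an $(X\times_Y X)$-extension $(T',T'',\sigma)$ of $T$ by $I$ is sent to the Baer difference $T' - T''$ in $\Exal_X(T,I)$, equipped with the trivialisation of its image in $\Exal_Y(T,I)$ induced by $\sigma$, which is precisely the data of an element of $\Def_\Psi(T,I)$ via Proposition \ref{prop:derdefseq}. One then verifies that this descends to the cokernel defining $\Obs_{\Delta_\Psi}$ and yields an isomorphism.

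For \itemref{prop:defexactseq:item:6term}, I would apply Proposition \ref{prop:derdefseq} to $\Psi$, $\Phi\circ\Psi$, and $\Phi$, producing three $6$-term exact sequences sharing the rows $\Der_X \to \Der_Y \to \Der_Z$ and $\Exal_X \to \Exal_Y \to \Exal_Z$. Under the identifications $\Aut_\bullet = \ker(\Der_\bullet \to \Der_\bullet)$ and $\Obs_\bullet = \coker(\Exal_\bullet \to \Exal_\bullet)$, these sequences assemble into a commutative diagram compatible with the obvious functorialities. The connecting morphisms $\Aut_\Phi \to \Def_\Psi$ and $\Def_\Phi \to \Obs_\Psi$ are obtained by restricting the boundary maps $\delta^0 : \Der_Y \to \Def_\Psi$ and $\delta^1 : \Exal_Y \to \Obs_\Psi$ from the $\Psi$-sequence to the appropriate subgroups. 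Exactness at each of the nine positions is then a diagram chase; as a representative case, if $\alpha = \delta^0(\beta) \in \Def_\Psi$ with $\beta \in \Der_Y$ maps to zero in $\Def_{\Phi\circ\Psi}$, then compatibility of the three $\delta^0$-maps forces the image of $\beta$ in $\Der_Z$ to lie in $\im(\Der_X \to \Der_Z)$, and subtracting the image of a lift places $\beta$ in $\Aut_\Phi = \ker(\Der_Y \to \Der_Z)$ without altering $\alpha$.

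For \itemref{prop:defexactseq:item:obs_bc}, the fibre-product identification $X_W = X\times_Y W$ combined with $\HNIL$-homogeneity shows that an $X_W$-extension of $U$ by $J$ is precisely the data of an $X$-extension and a $W$-extension of $U$ by $J$ sharing a common $Y$-extension. Hence, if $\epsilon \in \Exal_W(U,J)$ has image in $\Exal_Y(U,J)$ coinciding with that of some $\eta \in \Exal_X(U,J)$, a chosen identification of their $Y$-images promotes $\epsilon$ to an element of $\Exal_{X_W}(U,J)$, which is exactly the injectivity of the induced map $\Obs_{\Psi_W}(U,J) \to \Obs_\Psi(U,J)$. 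The main obstacle in this proposal is the bookkeeping for \itemref{prop:defexactseq:item:6term}; the other two parts are essentially translations of universal properties (the ``diagonal shift'' of deformation theory, and pullback-compatibility of obstructions) once the Picard structures of Section \ref{sec:extensions} are carried along.
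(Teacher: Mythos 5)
Your proposal is correct, and the overall architecture (three applications of Proposition \ref{prop:derdefseq} spliced together for the nine-term sequence; the $2$-fibre-product universal property for the injectivity in \itemref{prop:defexactseq:item:obs_bc}) matches the paper's. The genuine divergence is in \itemref{prop:defexactseq:item:autdefsobs}: you prove it directly, by normalising an object of $\DEF_{\Delta_\Psi}(T,I)$ to the canonical representative and by a Baer-difference construction identifying $\Obs_{\Delta_\Psi}(T,I)$ with $\Def_\Psi(T,I)$, whereas the paper deduces it as a formal corollary of \itemref{prop:defexactseq:item:6term} by substituting $\Psi\rightsquigarrow\Delta_\Psi$ and $\Phi\rightsquigarrow\mathrm{pr}_1:X\times_Y X\to X$, so that $\Phi\circ\Psi=\ID{X}$ and the vanishing $\Aut_{\ID{X}}=\Def_{\ID{X}}=\Obs_{\ID{X}}=0$ collapses the nine terms to the two desired isomorphisms (using that $\mathrm{pr}_1$ is the base change of $\Psi$ along itself, so $\Aut_{\mathrm{pr}_1}(T,I)\cong\Aut_\Psi(T,I)$ and $\Def_{\mathrm{pr}_1}(T,I)\cong\Def_\Psi(T,I)$). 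The paper's route is slicker and spares you the verification that the Baer-difference map is well defined on the cokernel, additive, and bijective; your route is more explicit about what the isomorphisms actually do to objects, and has the mild structural advantage that \itemref{prop:defexactseq:item:autdefsobs} does not depend on \itemref{prop:defexactseq:item:6term}. For \itemref{prop:defexactseq:item:6term} itself, the paper obtains the last four terms by a Snake Lemma applied to $[\Exal_X\to\Exal_Y\to\Obs_\Psi\to 0]$ over $[0\to\Exal_Z\to\Exal_Z\to 0]$ and cites an Olsson-style direct argument for the first seven; your uniform diagram chase through the three six-term sequences, with the connecting maps defined by restricting $\delta^0:\Der_Y\to\Def_\Psi$ to $\Aut_\Phi=\ker(\Der_Y\to\Der_Z)$ and composing $\Def_\Phi\to\Exal_Y\to\Obs_\Psi$, is the same content organised slightly differently and is sound, provided you do check the compatibility of the boundary maps (e.g.\ that $\Def_\Psi\to\Def_{\Phi\circ\Psi}\to\Exal_X$ agrees with $\Def_\Psi\to\Exal_X$), which you correctly flag as the bookkeeping burden.
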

\begin{proof}
  For \itemref{prop:defexactseq:item:6term}, we first apply the Snake
  Lemma to the commutative diagram: 
  \[
  \xymatrix{&\Exal_X(T,I) \ar[r] \ar[d] & \ar[d]\Exal_Y(T,I) \ar[r] & 
    \ar[d] {\Obs}_{\Psi}(T,I) \ar[r] & 0\\ 
    0 \ar[r] & \Exal_Z(T,I) \ar[r] & \Exal_Z(T,I) \ar[r] &
    0.&} 
  \]
  Combining this with Proposition \ref{prop:derdefseq} produces an
  exact sequence: 
  \[
  \xymatrix{\Def_\Phi(T,I) \ar[r] & {\Obs}_{\Psi}(T,I)
    \ar[r] & {\Obs}_{\Phi\circ \Psi}(T,I) \ar[r] &
    {\Obs}_{\Phi}(T,I) \ar[r] & 0.}
  \]
  A direct argument, as in \cite[A.15]{MR2097359}, produces the
  first $7$ terms of the exact sequence. Splicing these together
  gives the result. 

  The claim \itemref{prop:defexactseq:item:autdefsobs} follows from 
  \itemref{prop:defexactseq:item:6term} upon taking $\Psi :=
  \Delta_\Psi$, $\Phi$ the first projection $X\times_Y X\to X$, and
  noting that $\Aut_{\ID{X}} = \Def_{\ID{X}} = 0$. 
  
  For \itemref{prop:defexactseq:item:obs_bc}, we note that
  \itemref{prop:defexactseq:item:6term} provides a natural 
  homomorphism of abelian groups $\Obs_{\Psi_W}(U,J) \to
  \Obs_{X_W/Y}(U,J) \to  \Obs_{\Psi}(U,J)$. To see that this
  composition of maps is 
  injective, suppose that we have a $W$-extension $(U\hookrightarrow
  U')$ of $U$ by $J$. If it lifts, as a $Y$-extension, to an
  $X$-extension, then the universal property of the $2$-fiber
  product implies that it lifts to an $X_W$-extension. This proves
  the claim. 
\end{proof}
\section{Proof of Theorem \ref{mainthms:repcrit2}}\label{sec:crit_alg}
In this section we prove Theorem \ref{mainthms:repcrit2}. Before we do
this, however, we prove Corollary \ref{cor:repcrit1}.
\begin{proof}[Proof of Corollary \ref{cor:repcrit1}]
  Fix a morphism $x : \spec \Bbbk \to S$, where $\Bbbk$ is a
  field. Denote by $\mathscr{A}_S(x)$ the category whose objects
  are pairs $(A,a)$, where $A$ is a local artinian ring with residue
  field $\Bbbk$, and $a : \spec A \to S$ is a morphism of schemes, such
  that the composition $\spec \red{A} \to \spec A \to S$ agrees with
  $x$. Morphisms $(A,a) \to (B,b)$ in $\mathscr{A}_S(x)$ are ring
  homomorphisms $A \to B$ preserving the data. For  $\xi \in
  \FIB{X}{x}$, there is an induced category fibered in groupoids
  $X_\xi : \mathscr{C}_\xi \to 
  \mathscr{A}_S(x)^\opp$. The $\HA$-homogeneity of the 
  $S$-groupoid $X$ implies the homogeneity (in the sense of
  \cite[Exp.\ VI, Defn.\ 2.5]{SGA7}) of the cofibered category
  $X_\xi^\opp : \mathscr{C}_\xi^\opp \to \mathscr{A}_S(x)$. 

  If the morphism $x$ is locally of finite type, then by (6) and
  \cite[Lem.~6.6]{hallj_coho_bc} the $\Bbbk$-vector space 
  $\Exal_X(\xi,\Bbbk)$ is finite dimensional. By Example
  \ref{ex:noetherianexalder} and \loccit the
  $\Bbbk$-vector space $\Der_S(x,\Bbbk)$ is finite dimensional,
  and thus by Proposition \ref{prop:derdefseq}, the $\Bbbk$-vector
  space $\Def_{X/S}(\xi,\Bbbk)$ is finite dimensional. By definition,
  $\Def_{X/S}(\xi,\Bbbk)$ is the set of isomorphism classes of 
  the category $\FIB{X_\xi}{\xi[\epsilon]}$. 

  Thus, by (5), \cite[Thm.\ 1.5]{MR1935511} applies, and so
  for any such $\xi$, there is a pointed and affine $X$-scheme
  $(Q_\xi,q)$, locally of finite type over $S$, such that the $X$-scheme
  $\spec \kappa(q)$ is isomorphic to $\xi$,
  and $Q_\xi$ is formally versal at $q$. We now apply Theorem
  \ref{thm:opennessversal} to conclude that we may (by passing to an
  open subscheme) assume that $Q_\xi$ is a formally smooth $X$-scheme
  containing $q$. Condition (4) implies that the $X$-scheme $Q_\xi$ is
  representable by  smooth morphisms. 

  Define $K$ to be the set of all morphisms
  $x : \spec \Bbbk \to S$ which are locally of finite type, and where
  $\Bbbk$ is a field. Set
  $Q := \amalg_{\kappa\in K,\xi\in \FIB{X}{\kappa}} Q_\xi$. Then, we
  have seen that the $X$-scheme $Q$ is representable by smooth
  morphisms, and it remains to show that it is representable by
  surjective morphisms. Since the stack $X$ is limit preserving, it is
  sufficient to test this claim with affine $X$-schemes 
  $V$ which are of locally of finite type over $S$. The
  morphism of algebraic $S$-spaces $Q\times_X V \to V$ is smooth, and
  by construction its image contains all the points
  $v\in |V|$ such that the morphism $\spec \kappa(v) \to S$ is locally
  of finite type. Since, $V$ is of locally of finite type over $S$, it
  follows that $Q\times_X V \to V$ is surjective. 
\end{proof}
Bootstrapping, we can use Corollary  
\ref{cor:repcrit1} to obtain Theorem \ref{mainthms:repcrit2}.
\begin{proof}[Proof of Theorem \ref{mainthms:repcrit2}]
  Note that conditions (1) and (2), combined with Lemma
  \ref{lem:lim_prop}\itemref{lem:lim_prop:item:artin}, imply that the
  $S$-groupoid $X$ is limit preserving. 
  
  Suppose that the diagonal morphism $\Delta_{X/S} : X \to 
  X\times_S X$ is representable. Conditions (5) and (6),
  together with Corollary \ref{cor:derdefseq}, imply that for any
  affine $X$-scheme $V$ which is locally of finite type over $S$, the
  functor $M \mapsto \Exal_X(V,M)$ is finitely generated. Thus,
  Corollary \ref{cor:repcrit1} implies that $X$ is an
  algebraic stack which is locally of finite presentation over $S$.  

  Next, will show that if the second diagonal morphism
  $\Delta_{\Delta_{X/S}} : X \to X \times_{X\times_S X} X$ is
  representable, then the $1$-morphism $\Delta_{X/S} : X \to X\times_S
  X$ is representable by algebraic spaces. By Lemmata 
  \ref{lem:homog_prop}\itemref{lem:homog_prop:item:diag} and  
  \ref{lem:lim_prop}\itemref{lem:lim_prop:item:diag}, the diagonal
  $1$-morphism $\Delta_{X/S} : X \to X\times_S X$ is $\HA$-homogeneous and
  limit preserving. By Lemma
  \ref{lem:homog_prop}(\ref{lem:homog_prop:item:bc}\&\ref{lem:homog_prop:item:comp}), 
  we see that the $S$-groupoid $X\times_S X$ is
  $\HA$-homogeneous. Thus, by Lemmata
  \ref{lem:homog_prop}\itemref{lem:homog_prop:item:bc} and 
  \ref{lem:lim_prop}\itemref{lem:lim_prop:item:bc}, for any $X\times_S   
  X$-scheme $T$, the $T$-groupoid $I_{X,T} := X\times_{X\times_S X}
  (\SCH{T})$ is limit preserving and
  $\HA$-homogeneous. Representability of $I_{X,T}$ is local on $T$ for
  the Zariski topology, thus we may assume that $T$ is an affine
  scheme. By Lemma
  \ref{lem:lim_prop}(\ref{lem:lim_prop:item:bc}\&\ref{lem:lim_prop:item:comp}), 
  the $S$-groupoid $X\times_S X$ is limit preserving, thus any affine
  $X\times_S X$-scheme  $X\times_S X$-scheme $T$ factors through an affine
  $X\times_S X$-scheme $T_0$ that is locally of finite type over
  $S$. Thus, we may assume henceforth that $T$ is locally of finite
  type over $S$, and is consequently excellent. 

  Let $V$ be an affine $I_{X,T}$-scheme that is locally of finite type
  over $T$ (thus locally of finite type over $S$). Then, given
  $I\in \QCOH{V}$, we have natural isomorphisms:
  \[
  \Def_{I_{X,T}/T}(V,I) \cong \Def_{(I_{X,T})_V/V}(V,I) \cong
  \Def_{I_{X,V}/V}(V,I) \cong \Def_{\Delta_{X/S}}(V,I).
  \]
  By Proposition
  \ref{prop:defexactseq}\itemref{prop:defexactseq:item:autdefsobs}, we
  thus have $\Def_{I_{X,T}/T}(V,I) \cong \Aut_{X/S}(V,I)$ and so the functor
  $M\mapsto \Def_{I_{X,T}/T}(V,M)$ is coherent. By Proposition
  \ref{prop:defexactseq}(\ref{prop:defexactseq:item:autdefsobs}\&\ref{prop:defexactseq:item:obs_bc})
  we also have 
  \[
  \Obs_{I_{X,T}/T}(V,I) \subset \Obs_{\Delta_{X/S}}(V,I) \cong
  \Def_{X/S}(V,I). 
  \]
  Hence, the functor $M\mapsto \Def_{X/S}(V,M)$ defines a $1$-step,
  coherent relative obstruction theory for the $1$-morphism $I_{X,T}
  \to T$ at $V$. The $T$-groupoid $I_{X,T}$ has representable
  diagonal, thus satisfies the
  conditions of the previous analysis, hence is an algebraic
  stack, locally of finite presentation over $T$. The diagonal
  $1$-morphism $\Delta_{I_{X,T}/T}$ is a monomorphism, thus $I_{X/T}$
  is an algebraic space. 

  It remains to show that the hypotheses of the Theorem guarantee that
  the second diagonal morphism $\Delta_{\Delta_{X/S}}$ is
  representable. Fix an $X$-scheme $T$, which by the analysis above we
  may assume is locally of finite type over $S$ and excellent, then it
  remains to show that the $T$-groupoid $R_{X,T}:=
  X\times_{(X\times_{X\times_S X} X)} (\SCH{T})$ is representable by
  algebraic spaces. By the previous analysis, we deduce immediately
  that $R_{X,T}$ is a limit preserving and $\HA$-homogeneous
  $T$-groupoid. Also, the third diagonal $1$-morphism of $S$-groupoids
  $\Delta_{\Delta_{\Delta_{X/S}}}$ is an isomorphism, thus is
  representable. So the diagonal $1$-morphism of the $T$-groupoid
  $R_{X,T}$ is an isomorphism. For an affine $R_{X,T}$-scheme $V$
  which is locally of finite type over $S$, and a quasicoherent
  $\Orb_V$-module $I$ we have just shown that $\Def_{R_{X,T}/T}(V,I) =
  0$. By Proposition
  \ref{prop:defexactseq}(\ref{prop:defexactseq:item:autdefsobs}\&\ref{prop:defexactseq:item:obs_bc})
  we see that
  \[
  \Obs_{R_{X,T}/T}(V,I) \subset
  \Obs_{\Delta_{\Delta_{X/S}}}(V,I) \cong \Def_{\Delta_{X/S}}(V,I)
  \cong \Aut_{X/S}(V,I).  
  \]
  Hence, the functor $M\mapsto \Aut_{X/S}(V,M)$ defines a $1$-step
  coherent relative obstruction theory for the $T$-groupoid
  $R_{X,T}$ at $V$. Applying the first
  analysis to this $T$-groupoid proves the result. 
\end{proof}
\section{Application I: the stack of quasicoherent
  sheaves}\label{sec:example_qcoh} 
Fix a scheme $S$. For an algebraic $S$-stack 
$Y$ and a property $P$ of
quasicoherent $\Orb_Y$-modules, denote by 
 $\QCOH[P]{Y}$ the   
full subcategory of $\QCOH{Y}$ consisting of objects which are $P$. We
will be interested in the following properties $P$ of quasicoherent
$\Orb_Y$-modules and their combinations: 
\begin{enumerate}
\item[\textbf{fp}] -- finitely presented,
\item[\textbf{fl}] -- $Y$-flat,
\item[\textbf{flb}] -- $S$-flat,
\item[\textbf{prb}] -- $S$-proper support.
\end{enumerate}
Fix a morphism of algebraic stacks $f : X\to S$. For any $S$-scheme
$T$, consider a property $P$ of     
quasicoherent $\Orb_{X_T}$-modules. Define 
$\QCOHSTK[P]{X}{S}$ to be the category with objects a pair $(T, \shv{M})$,
where $T$ is an $S$-scheme and $\shv{M}\in \QCOH[P]{X_T}$. A morphism
$(a,\alpha) : (V,\shv{N}) \to (T,\shv{M})$ in 
the category $\QCOHSTK[P]{X}{S}$ consists of an $S$-scheme morphism $a :
V\to T$ together with an $\Orb_{X_V}$-{isomorphism} $\alpha :
a^*_{X_T}\shv{M} \to 
\shv{N}$. Set $\COHSTK{X}{S} = \QCOHSTK[\bflat,\fp,\bcompact]{X}{S}$. In
this section we will prove 
\begin{thm}\label{thm:flshvs_alg}
  Fix a scheme $S$ and a morphism of algebraic stacks $f : X \to S$,
  which is separated and locally of finite presentation. Then,
  $\COHSTK{X}{S}$ is an algebraic stack, 
  locally of finite presentation over $S$, with affine diagonal
  over $S$.  
\end{thm}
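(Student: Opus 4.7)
The plan is to apply Theorem \ref{mainthms:repcrit2} to $\stk{X} := \COHSTK{X}{S}$, and then to verify affine diagonal separately. Conditions (1)--(4) of Theorem \ref{mainthms:repcrit2} are classical: (1) is fpqc descent for finitely presented, $S$-flat coherent sheaves with proper support over $S$; (2) is the standard spreading-out statement for finitely presented, properly supported coherent sheaves over a filtered inverse limit of affine schemes; and (4) is Grothendieck's Existence Theorem in the stack setting. For (3), $\HA$-homogeneity, I would invoke Appendix \ref{app:schlessinger_pushouts}: given a pushout $V' = V\sqcup_T T'$ of $S$-schemes with $T\hookrightarrow T'$ a locally nilpotent closed immersion and $T\to V$ affine, pullback to $X$ preserves this pushout (by Lemma \ref{lem:homog_pushouts_int}), and a Ferrand-style gluing of modules recovers the corresponding pushout of finitely presented coherent sheaves on $X_{V'}$; $S$-flatness and the properness of the support descend through this gluing because they are local on $S$ and the underlying topological space is unchanged.

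The heart of the argument is verifying (5) and (6). Fix an affine $T$ locally of finite type over $S$ and $\shv{M} \in \FIB{\stk{X}}{T}$. A standard cocycle analysis identifies the automorphism and deformation functors with the Ext functors
\[
\Aut_{\stk{X}/S}(\shv{M},-) \cong \Hom_{\Orb_{X_T}}(\shv{M},\shv{M}\tensor_{\Orb_{X_T}} f_T^*(-)),
\]
\[
\Def_{\stk{X}/S}(\shv{M},-) \cong \Ext^1_{\Orb_{X_T}}(\shv{M},\shv{M}\tensor_{\Orb_{X_T}} f_T^*(-)),
\]
and each is coherent by Example \ref{ex:coh_func_thm} applied with $\shv{N} = \shv{M}$ (here using $S$-flatness and proper support of $\shv{M}$). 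This gives (5). The principal obstacle is (6): in the non-flat case (i.e. when $\shv{M}$ is not flat over $X_T$), there is no single $\Ext^2$-group controlling the obstruction to lifting to a member of $\COHSTK{X}{S}(T')$, precisely the subtlety noted by Olsson--Starr. I would construct an explicit $2$-step coherent obstruction theory as follows: the first space $\mathrm{O}^1(\shv{M},I) \subseteq \Ext^2_{\Orb_{X_T}}(\shv{M},\shv{M}\tensor f_T^*I)$ is the classical obstruction to the existence of an abstract $\Orb_{X_{T'}}$-module lift $\shv{M}'$ of $\shv{M}$, again coherent by Example \ref{ex:coh_func_thm}; once $\mathrm{O}^1$ vanishes, the resulting $\shv{M}'$ need not be $S$-flat, so I define $\mathrm{O}^2(\shv{M},I)$ from the local $\Tor$-functors of Appendix \ref{app:tor} to measure the failure of $S$-flatness, which is again coherent by Example \ref{ex:coh_func_thm}. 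When $\mathrm{O}^2$ vanishes, the lift is $S$-flat, and finite presentation and proper support of the lift are automatic since $|X_{T'}|=|X_T|$ and $X_{T'}\to T'$ has the same underlying proper fibres.

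For affine diagonal over $S$, the fiber of $\Delta_{\stk{X}/S}$ at a pair $((T,\shv{M}),(T,\shv{N}))$ is $\Isomstk_{X_T}(\shv{M},\shv{N})$, which sits as a closed subscheme of $\Homstk_{X_T}(\shv{M},\shv{N})\times_T\Homstk_{X_T}(\shv{N},\shv{M})$ cut out by the two equations ``both compositions equal the identity''. Each Hom-space is representable by an affine $T$-scheme: it is the linear $T$-scheme associated to the coherent functor $\Hom_{\Orb_{X_T}}(\shv{M},\shv{N}\tensor f_T^*(-))$ of Example \ref{ex:coh_func_thm}. Since a closed subscheme of an affine scheme is affine, $\Delta_{\stk{X}/S}$ is affine, completing the proof.
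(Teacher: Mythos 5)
Your overall architecture (apply Theorem \ref{mainthms:repcrit2}, identify $\Aut$ and $\Def$ with $\Hom$ and $\Ext^1$, handle the diagonal via $\Isomstk$ sitting inside a product of affine Hom-schemes) is the paper's, and those parts are fine modulo the standard reduction to $S$ noetherian and excellent, which you need both to invoke Theorem \ref{mainthms:repcrit2} at all and to know that $\STor^{S,\tau,f}_1(\Orb_T,\Orb_X)$ is coherent. The genuine gap is in your construction of the $2$-step obstruction theory for condition (6): you have the two steps in the wrong order, and your first step does not make sense as stated. An ``abstract $\Orb_{X_{T'}}$-module lift'' of $\shv{M}$ always exists ($j_*\shv{M}$ is one), so if that is what you mean, $\mathrm{o}^1$ is identically zero and carries no information. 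If instead you mean a lift in Illusie's sense, i.e.\ one equipped with an identification $\ker(\shv{M}'\to j_*\shv{M})\cong\shv{M}\tensor_{\Orb_{X_T}}J$ where $J=j^*\ker(\Orb_{X_{T'}}\to j_*\Orb_{X_T})$, then the obstruction class lives in $\Ext^2_{\Orb_{X_T}}(\shv{M},\shv{M}\tensor J)$, and $J$ is only a \emph{quotient} of $f_T^*I$ (via the exact sequence $\STor_1^{S,\tau,f}(\Orb_T,\Orb_X)\to f_T^*I\to J\to 0$), with the induced map on $\Ext^2$ going the wrong way. So $\mathrm{O}^1(\shv{M},I)\subseteq\Ext^2(\shv{M},\shv{M}\tensor f_T^*I)$ is not the home of that class, and since $J$ varies with the extension $T'$ rather than only with $I$, $\Ext^2(\shv{M},\shv{M}\tensor J)$ is not even an additive functor of $I$ as the definition of an obstruction theory requires.

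This is precisely why the paper takes the flatness obstruction \emph{first}: for any $T'$-flat lift the local criterion of flatness forces $\shv{M}\tensor f_T^*I\to\shv{M}\tensor J$ to be an isomorphism, a condition on the extension alone, recorded by a class $\mathrm{o}^1$ in $\Hom_{\Orb_{X_T}}(\shv{M}\tensor\STor_1^{S,\tau,f}(\Orb_T,\Orb_X),\shv{M}\tensor f_T^*I)$ --- which \emph{is} a coherent functor of $I$. Only after $\mathrm{o}^1$ vanishes does $\shv{M}\tensor J$ get canonically identified with $\shv{M}\tensor f_T^*I$, at which point the Illusie class can legitimately be placed in $\Ext^2_{\Orb_{X_T}}(\shv{M},\shv{M}\tensor f_T^*I)$ as the second step. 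Example \ref{ex:non_flat_non_split} shows the first obstruction is genuinely nonzero in the non-flat, non-split case, so your reversed order cannot be repaired by arguing the flatness defect vanishes; you must restructure step (6) along these lines.
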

A proof of Theorem \ref{thm:flshvs_alg}, without the statement about
the diagonal, appeared in \cite[Thm.\ 2.1]{MR2233719}, though was
light on details. In particular, no explicit obstruction theory was
given and, as we will see, the obstruction theory is subtle when $f$
is not flat (and is not a standard fact). There was also a minor error
in the statement---that the morphism $f$ is separated is essential
\cite{MR2369042}. The statement about the diagonal of $\COHSTK{X}{S}$
was addressed by M.~Roth and J.~Starr
\cite[\S2]{2009arXiv0908.0096R}--- their approach, however, is
completely different, and relies on \cite[Prop.~2.3]{MR2233719}. In
the setting of analytic spaces, the properties of the diagonal were
addressed by H.~Flenner \cite[Cor.~3.2]{MR641823}.

Just as in \opcit[, Prop.\ 2.7], an immediate consequence of
Theorems \ref{thm:flshvs_alg} and \cite[Thm.\ D]{hallj_coho_bc} is the
existence of Quot spaces. Recall that for a quasicoherent
$\Orb_X$-module $\shv{F}$, the presheaf $\Quotshf_{X/S}(\shv{F}) :
(\SCH{S})^\opp \to \SETS$ is defined as follows:
\[
\Quotshf_{X/S}(\shv{F})[T\xrightarrow{\tau} S] = \{ \tau_X^*\shv{F}
\twoheadrightarrow \shv{Q} \suchthat \shv{Q} \in
\QCOH[\bflat,\fp,\bcompact]{X_T}\}/\cong.
\]
\begin{cor}\label{cor:quot}
  Fix a scheme $S$ and an algebraic $S$-stack $X$ that is separated
  and locally of finite presentation over $S$. Let $\shv{F}\in
  \QCOH{X}$, then $\Quotshf_{X/S}(\shv{F})$ is an algebraic
  space which is separated over $S$. If, in addition, $\shv{F}$ is of
  finite presentation, then $\Quotshf_{X/S}(\shv{F})$ is locally of
  finite presentation over $S$.   
\end{cor}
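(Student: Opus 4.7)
The plan is to realize $\Quotshf_{X/S}(\shv{F})$ as a sheaf over the algebraic stack $\COHSTK{X}{S}$ — whose algebraicity and affine diagonal are already supplied by Theorem \ref{thm:flshvs_alg} — and to prove that the resulting structure morphism is representable by open subschemes of affine schemes. Explicitly, sending a quotient $[\tau_X^*\shv{F} \twoheadrightarrow \shv{Q}]$ to the quotient sheaf $\shv{Q}$ defines a morphism of $(\SCH{S})_\Et$-stacks $t : \Quotshf_{X/S}(\shv{F}) \to \COHSTK{X}{S}$. Because $\Quotshf_{X/S}(\shv{F})$ is a sheaf of sets, once $t$ is shown to be representable by algebraic spaces, $\Quotshf_{X/S}(\shv{F})$ will automatically be an algebraic stack with trivial inertia, hence an algebraic space.

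For the representability of $t$, I would fix an $S$-scheme $T$ and a morphism $T \to \COHSTK{X}{S}$ classifying some $\shv{Q} \in \QCOH[\bflat,\fp,\bcompact]{X_T}$. The fibre $\Quotshf_{X/S}(\shv{F}) \times_{\COHSTK{X}{S}} T$ sends $T' \to T$ to the set of $\Orb_{X_{T'}}$-linear surjections $\tau_X^*\shv{F}|_{X_{T'}} \twoheadrightarrow \shv{Q}|_{X_{T'}}$. By Example \ref{ex:coh_func_thm}, applied to $f_T : X_T \to T$ with $\shv{N} = \shv{Q}$, the functor $M \mapsto \Hom_{\Orb_{X_T}}(\tau_X^*\shv{F}, \shv{Q} \tensor f_T^*M)$ on $\QCOH{T}$ is coherent; combining this with the classical representability of $\underline{\Hom}$ under $T$-flatness and $T$-proper support on $\shv{Q}$ (i.e.\ \cite[Thm.~D]{hallj_coho_bc}) shows that it is represented by an affine $T$-scheme $H_T$. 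Within $H_T$, the locus where the universal morphism is surjective is an open subscheme, by the standard fibrewise surjectivity criterion combining Nakayama's lemma with the properness of $\supp(\shv{Q})$ over $T$. Hence $t$ is representable by open subschemes of affine $T$-schemes, and the algebraicity of $\Quotshf_{X/S}(\shv{F})$ follows.

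For separatedness of $\Quotshf_{X/S}(\shv{F})$ over $S$, I would factor the diagonal as
\[
\Quotshf_{X/S}(\shv{F}) \xrightarrow{\Delta_1} \Quotshf_{X/S}(\shv{F}) \times_{\COHSTK{X}{S}} \Quotshf_{X/S}(\shv{F}) \xrightarrow{\iota} \Quotshf_{X/S}(\shv{F}) \times_S \Quotshf_{X/S}(\shv{F}),
\]
where $\Delta_1$ is the relative diagonal of a morphism representable by open subschemes of affines (hence a closed immersion) and $\iota$ is a base change of $\Delta_{\COHSTK{X}{S}/S}$, which is affine by Theorem \ref{thm:flshvs_alg}. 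The main — and only — technical subtlety is that an affine morphism composed with a closed immersion need not be a closed immersion, so I would conclude separatedness via the valuative criterion: given two quotients over a DVR $R$ with isomorphic generic fibres, their kernels are $R$-flat subsheaves of $\tau_X^*\shv{F}$ with the same generic fibre, and hence coincide. Finally, when $\shv{F}$ is of finite presentation, the limit preservation of $\COHSTK{X}{S}$ combines with the limit preservation of $\Hom_{\Orb_{X_T}}(\tau_X^*\shv{F}, -)$ (coming from the finite presentation of $\tau_X^*\shv{F}$) to show that $\Quotshf_{X/S}(\shv{F})$ is limit preserving, whence locally of finite presentation over $S$ by Lemma \ref{lem:lim_prop}\itemref{lem:lim_prop:item:rep}. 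The main conceptual obstacle — the algebraicity of the stack of coherent sheaves in the presence of non-flatness — is precisely what Theorem \ref{thm:flshvs_alg} has just resolved; everything after that is routine.
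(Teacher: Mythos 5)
Your argument is correct and is exactly the route the paper takes: the paper declares the corollary ``an immediate consequence of Theorem \ref{thm:flshvs_alg} and [Thm.~D]'' following Lieblich's Prop.~2.7, i.e.\ relative representability of $\Quotshf_{X/S}(\shv{F})$ over $\COHSTK{X}{S}$ by the open surjectivity loci inside the affine Hom-schemes, which is precisely what you spell out. One small slip worth fixing: in the valuative-criterion step the kernels need not be $R$-flat (neither $X \to S$ nor $\shv{F}$ is assumed flat); what the argument actually uses is that the \emph{quotients} are $R$-flat, hence $R$-torsion-free, so each kernel is determined by its generic fibre --- the conclusion stands.
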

When $\shv{F}$ is of finite
presentation, Corollary \ref{cor:quot} was proved by M. Olsson and J. Starr \cite[Thm.\
1.1]{MR2007396} and M. Olsson \cite[Thm.\ 1.5]{MR2183251}. When
$\shv{F}$ is quasicoherent and $X \to S$ is locally projective,
Corollary \ref{cor:quot} was recently addressed by G. Di Brino
\cite[Thm.~0.0.1]{2012arXiv1212.4544D} using different methods. 

To prove Theorem \ref{thm:flshvs_alg}, we
use Theorem \ref{mainthms:repcrit2}. Note that there are inclusions:  
\[
\QCOHSTK[\bflat,\fp,\bcompact]{X}{S} \subset
\QCOHSTK[\bflat,\fp]{X}{S} \subset \QCOHSTK[\bflat]{X}{S}.
\]
The first inclusion is trivially
formally \'etale. By Lemma \ref{lem:nilp_fp}\itemref{item:nilp_fpmod}
the second inclusion is also formally \'etale. Thus, by Lemmata 
\ref{lem:homog_prop}(\ref{lem:homog_prop:item:comp}\&\ref{lem:homog_prop:item:fet})
if $\QCOHSTK[\bflat]{X}{S}$ is $\HA$-homogeneous, the same will be
true of $\COHSTK{X}{S}$. Also, by Lemmata
\ref{lem:def_fet} 
and \ref{lem:obs_fet}, it is sufficient to determine the
automorphisms, deformations, and obstructions for
$\QCOHSTK[\bflat]{X}{S}$. 

Throughout, we fix a clivage for $\QCOHSTK{X}{S}$. This gives an
equivalence of $S$-groupoids $\QCOHSTK{X}{S} \to
\SCH{\QCOHSTK{X}{S}}$, which we will use without further comment. 
\begin{lem}\label{lem:flshvs_homog}
  Fix a scheme $S$ and a morphism of algebraic stacks $f : X \to
  S$. Then, the $S$-groupoid $\QCOHSTK[\bflat]{X}{S}$ is
  $\HA$-homogeneous. 
\end{lem}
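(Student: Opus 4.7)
The plan is to construct the pushout in $\SCH{\QCOHSTK[\bflat]{X}{S}}$ explicitly and then verify both homogeneity axioms from this construction. Fix a diagram of $\QCOHSTK[\bflat]{X}{S}$-schemes
\[
\bigl[(V,\shv{M}_V) \xleftarrow{p} (T,\shv{M}_T) \xrightarrow{i} (T',\shv{M}_{T'})\bigr]
\]
with $i$ a locally nilpotent closed immersion and $p$ affine. Applying Lemma \ref{lem:homog_pushouts_int} to the trivially $\HA$-homogeneous $S$-groupoid $\SCH{S}$ yields the pushout $V' = V \sqcup_T T'$ in $\SCH{S}$, together with the canonical presentation $\Orb_{V'} \cong i'_*\Orb_V \times_{p'_*i_*\Orb_T} p'_*\Orb_{T'}$. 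Base-changing along $f \colon X \to S$ and invoking the analogous pushout result for algebraic stacks from Appendix \ref{app:schlessinger_pushouts} identifies $X_{V'}$ with $X_V \sqcup_{X_T} X_{T'}$ and supplies a parallel presentation of $\Orb_{X_{V'}}$.

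Next, I would define the candidate module
\[
\shv{M}_{V'} \;:=\; i'_{X,*}\shv{M}_V \times_{p'_{X,*}i_{X,*}\shv{M}_T} p'_{X,*}\shv{M}_{T'},
\]
naturally an $\Orb_{X_{V'}}$-module via the presentation above. Reducing Zariski-locally on $V'$ to an affine situation, quasicoherence follows from classical Ferrand-type gluing of modules along the ring-theoretic pushout. Since both maps in the fiber product are surjective ($i_X$ being a closed immersion, and $\shv{M}_{T'} \to i_{X,*}\shv{M}_T$ being the given identification), one obtains a short exact sequence of $\Orb_{X_{V'}}$-modules
\[
0 \to \shv{M}_{V'} \to i'_{X,*}\shv{M}_V \oplus p'_{X,*}\shv{M}_{T'} \to p'_{X,*}i_{X,*}\shv{M}_T \to 0.
\]

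The main obstacle is verifying $S$-flatness of $\shv{M}_{V'}$. Since pushforward along the closed immersion $i'_X$ and along the affine morphism $p'_X$ do not affect $\Orb_S$-flatness, the three other terms in the sequence reduce as $\Orb_S$-modules to $\shv{M}_V$, $\shv{M}_{T'}$ and $\shv{M}_T$, each $S$-flat by hypothesis; the long exact sequence of local $\Tor$-functors (Appendix \ref{app:tor}) then forces $\shv{M}_{V'}$ to be $S$-flat. Finally, condition $(\mathrm{H}^{\HA}_2)$ follows by combining the universal property of $V'$ in $\SCH{S}$ with the universal property of the fiber-product sheaf: any compatible cocone on $(W,\shv{M}_W)$ factors uniquely through $(V',\shv{M}_{V'})$. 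Condition $(\mathrm{H}^{\HA}_1)$ follows by uniqueness, since any cocartesian square in $\SCH{S}$ of this shape admits at most one $\QCOHSTK[\bflat]{X}{S}$-scheme structure on its apex compatible with the given data, namely the one just constructed.
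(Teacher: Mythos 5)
Your strategy --- form the schematic pushout $V'$, define $\shv{M}_{V'}$ as the fiber product of pushforwards, and reduce to the affine case --- is the same as the paper's, but the step you single out as the main obstacle contains a genuine error. The flatness required of $\shv{M}_{V'}$ is not flatness over the fixed base $S$ but flatness over $V'$: in $\QCOHSTK[\bflat]{X}{S}$ the condition on an object $(T,\shv{M})$ is that $\shv{M}$ is flat over $T$, the base of the family (the paper's own proof checks precisely that the glued module is ``$T_3$-flat''), and this is what must be verified for $(V',\shv{M}_{V'})$ to be an object of the category at all. For $V'$-flatness your two-out-of-three $\Tor$ argument collapses: the outer terms of your short exact sequence are flat over $V$, $T'$ and $T$ respectively, but not over $V'$ --- already $i'_{X,*}\Orb_{X_V}$ fails to be $\Orb_{X_{V'}}$-flat for a nontrivial closed immersion $i'$ --- so the long exact sequence of local $\Tor$-functors over $\Orb_{V'}$ yields nothing. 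The statement you actually need, namely that over the fiber-product ring $A_3=A_1\times_{A_0}A_2$ the module $M_3=M_1\times_{M_0}M_2$ is $A_3$-flat and satisfies $M_3\otimes_{A_3}A_i\cong M_i$, is Ferrand's theorem [Fer03, Thm.~2.2]; it is the essential content of this lemma, it is not a formal $\Tor$ computation, and your proposal does not supply it. Note also that the isomorphisms $i'^*_{X}\shv{M}_{V'}\cong\shv{M}_V$ and $p'^*_{X}\shv{M}_{V'}\cong\shv{M}_{T'}$ are themselves needed for your square to be a diagram in the category in the first place, and they are part of the same theorem.

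Two further points. The reduction to the affine case must be smooth-local on $X_{V'}$, not merely Zariski-local on $V'$, since $X_{V'}$ is an algebraic stack; one first passes to a smooth affine cover of $X$, using that flat base change preserves the fiber-product description because the morphisms involved are affine. And for $(\mathrm{H}^{\HA}_1)$ your appeal to ``uniqueness of the compatible structure on the apex'' begs the question: what must be shown is that for a square already given in $\QCOHSTK[\bflat]{X}{S}$ and cocartesian in $\SCH{S}$, the canonical map from the module on the apex to the fiber product of the pushforwards is an isomorphism, and this is exactly where flatness of that module over the apex enters, by tensoring the exact sequence $0\to A_3\to A_1\times A_2\to A_0\to 0$ with it.
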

\begin{proof}
First we check $(\mathrm{H}_1^{\HA})$. Fix a commutative
diagram of $\QCOHSTK[\bflat]{X}{S}$-schemes: 
\begin{equation}
\xymatrix@-0.5pc{(T_0,\shv{M}_0) \ar@{^(->}[r]^{({i},\phi)} \ar[d]_{(p,\pi)}
  & \ar[d]^{(p',\pi')} (T_1,\shv{M}_1) \\ 
  (T_2,\shv{M}_2) \ar@{->}[r]^-{({i}',\phi')} &
  (T_3,\shv{M}_3),}\label{eq:qcoh_h1} 
\end{equation}
where $p$ is affine and ${i}$ is a locally nilpotent closed
immersion. Set $(g,\gamma) = ({i}',\phi')\circ (p,\pi):
(T_0,\shv{M}_0) \to  (T_3,\shv{M}_3)$. Lemma 
\ref{lem:homog_prop}\itemref{lem:homog_prop:item:univ} implies
that if the diagram \eqref{eq:qcoh_h1} is cocartesian in the category of
$\QCOHSTK[\bflat]{X}{S}$-schemes, then it remains cocartesian in the
category of $S$-schemes. Conversely, suppose that the diagram
\eqref{eq:qcoh_h1} is cocartesian in the category of $S$-schemes. By Lemma
\ref{lem:homog_pushouts_int} (applied to $X=S$), ${i}'$ is a
locally nilpotent closed immersion and $p'$ is affine. Let
$(W,\shv{N})$ be a $\QCOHSTK[\bflat]{X}{S}$-scheme, and for $k\neq 3$
fix $\QCOHSTK[\bflat]{X}{S}$-scheme maps  $(y_k,\psi_k) : 
(T_k,\shv{M}_k) 
\to (W,\shv{N})$. Since the diagram \eqref{eq:qcoh_h1} is cocartesian
in the category of $S$-schemes, there exists a unique $S$-morphism $y_3 :
T_3 \to W$ that is compatible with this data. By adjunction, we
obtain  unique maps of $\Orb_{X_{W}}$-modules: 
\[
\shv{N} \to (y_{1})_*\shv{M}_1 \times_{(y_{0})_*\shv{M}_0} (y_{2})_*\shv{M}_2
\cong \{(y_{3})_*p'_*\shv{M}_1\} \times_{\{(y_{3})_*g_*\shv{M}_0\}}
\{(y_{3})_*{i}'_*\shv{M}_2\}.
\]
The functor $(y_{3})_*$ is left-exact, so there is a
functorial isomorphism $\Orb_{X_W}$-modules: 
\[
\{(y_{3})_*p'_*\shv{M}_1\} \times_{\{(y_{3})_*g_*\shv{M}_0\}}
\{(y_{3})_*{i}'_*\shv{M}_2\} \cong (y_{3})_*\{p'_*\shv{M}_1
\times_{g_*\shv{M}_0} {i}'_*\shv{M}_2\}. 
\]
The commutativity of the diagram \eqref{eq:qcoh_h1} posits a uniquely 
induced morphism:
\[
\delta : \shv{M}_3 \to p'_*\shv{M}_1 \times_{g_*\shv{M}_0} {i}'_*\shv{M}_2 \cong
p'_*p'^*\shv{M}_3 \times_{g_*g^*\shv{M}_3} {i}'_*{i}'^*\shv{M}_3.
\]
Thus, it is sufficient to prove that the map
$\delta$ is an isomorphism, which is local for the
smooth topology. So, we immediately reduce to the affine case, where
$S=\spec A$, $X=\spec D$, and  $f : X \to S$ is given by a ring
homomorphism $A  \to D$. For each $i$ we have $T_i = \spec
A_i$ and we set $D_i = D\tensor_A A_3$. Also, $\shv{M}_3\cong
\widetilde{M}_3$, where $M_3$ is a $D_3$-module which is
$A_3$-flat. Now, we have an exact sequence  of $A_3$-modules:   
\[
\xymatrix{0 \ar[r] & A_3 \ar[r] & A_1 \times A_2 \ar[r] & A_0 \ar[r] & 0.}
\]
Applying the exact functor $M_3\tensor_{A_3} - $ to this sequence
produces an exact sequence: 
\[
\xymatrix{0 \ar[r] & M_3 \ar[r] & (M_3\tensor_{A_3} A_1) \times
  (M_3\tensor_{A_3} A_2) \ar[r] & M_3\tensor_{A_3} A_0 \ar[r] & 0.} 
\]
Since $M_3\tensor_{A_3} A_i \cong M_3 \tensor_{D_3} D_i$, we obtain
the required isomorphism $\delta$: 
\[
M_3 \cong (M_3\tensor_{A_3} A_1) \times_{(M_3\tensor_{A_3} A_0)}
(M_3\tensor_{A_3} A_2) \cong (M_3\tensor_{D_3} D_1)
\times_{(M_3\tensor_{D_3} D_0)} 
(M_3\tensor_{D_3} D_2).
\]

Next we check condition $(\mathrm{H}_2^\HA)$. Fix a diagram of
$\QCOHSTK[\bflat]{X}{S}$-schemes: 
\[
[ (T_1,\shv{M}_1) \xleftarrow{({i},\phi)}
(T_0,\shv{M}_0) \xrightarrow{(p,\pi)} (T_2,\shv{M}_2)],
\]
where ${i}$ is a locally nilpotent closed immersion and $p$ is
affine. Given a cocartesian square of $S$-schemes: 
\begin{equation}
\xymatrix@-0.8pc{T_0 \ar@{^(->}[r]^{i} \ar[d]_p & T_1 \ar[d]^{p'} \\ T_2
  \ar@{^(->}[r]^{{i}'} & T_3, }\label{eq:qcoh_inc}
\end{equation}
write $g = {i}'p$ and set 
\[
\shv{M}_3 = \ker \left( (p'_{X_{T_3}})_*\shv{M}_1 \times
({i}'_{X_{T_3}})_*\shv{M}_2 \xrightarrow{d} g_*\shv{M}_0\right) \in
\QCOH{X_{T_3}}, 
\]
where $d$ is the map $(m_1,m_2) \mapsto (g_*\phi)(m_1) -
(g_*\pi)(m_2)$. It remains to show that $\shv{M}_3$ is
$T_3$-flat, that the induced morphisms of
quasicoherent $\Orb_{X_2}$-modules $\phi' : {i}'^*_{X_{T_3}}\shv{M}_3 \to
\shv{M}_2$ and $\pi' : p'^*_{X_{T_3}}\shv{M}_3 \to \shv{M}_1$ are
isomorphisms, and that the following diagram commutes:
\[
\xymatrix@R-1.5pc{& \ar[dl]{i}^*_{X_{T_1}}p'^*_{X_{T_3}}\shv{M}_3
  \ar[rr]^{{i}^*\pi'} & &  
  {i}_{X_{T_1}}^*\shv{M}_1 \ar[dr]^\phi&\\
g^*\shv{M}_3 & & & & \shv{M}_0\\
& \ar[ul] p_{X_{T_2}}^*{i}'^*_{X_{T_3}}\shv{M}_3 \ar[rr]^{p^*_{X_{T_2}}\phi'} & &
p_{X_{T_2}}^*\shv{M}_2 \ar[ur]_\pi. &}  
\]
Indeed, this shows that the pairs $({i}',\phi')$ and $(p',\pi')$
define $\QCOHSTK[\bflat]{X}{S}$-morphisms, and that the resulting
completion of the diagram \eqref{eq:qcoh_inc} commutes.

Now, these claims may all be verified locally for the smooth topology. Thus,
we reduce to the affine situation as before, with the modification
that for $k\neq 3$ we have $\shv{M}_k\cong \widetilde{M}_k$, where
$M_k$ is a $D_k$-module which is flat over $A_k$, and $\shv{M}_3 \cong
\widetilde{M}_3 \cong \widetilde{M}_1\times_{\widetilde{M}_0}
\widetilde{M}_2$. The result now follows from \cite[Thm.\ 2.2]{MR2044495}.
\end{proof}
We now determine the automorphisms, deformations, and
obstructions. Let $(T,\shv{M})$ be a $\QCOHSTK[\bflat]{X}{S}$-scheme,
and fix a quasicoherent $\Orb_T$-module $I$. For an $S$-extension ${i} :  
T\hookrightarrow T'$ of $T$ by $I$, we have a $2$-cartesian diagram:  
\[
\xymatrix{X_T \ar[d]_{f_T} \ar@{^(->}[r]^{{j}} &
  X_{T'}    \ar[d]^{f_{T'}} \ar[r] & X \ar[d]^f\\ T \ar@/_.9pc/[rr]_\tau
  \ar@{^(->}[r]^{{i}} & T' 
  \ar[r]^{\tau'} & S.}
\]
Set $J = {j}^*\ker(\Orb_{X_{T'}} \to {j}_*\Orb_{X_T})$. Fix a
$\QCOHSTK{X}{S}$-extension $({i},\phi) : (T,\shv{M}) \to
(T',\shv{M}')$, then we obtain a commutative diagram:  
\[
\xymatrix@-0.5pc{\shv{M}\tensor_{\Orb_{X_T}} f_T^*I
  \ar@/_1pc/[dr]\ar@{->>}[r] &  
  \shv{M}\tensor_{\Orb_{X_T}} J \ar@{->>}[d] \\ & {j}^*\ker(\shv{M'}\to
  {j}_*\shv{M}).} 
\]
By the local criterion for flatness, $\shv{M}'$ is $T'$-flat if and
only if the diagonal map is an isomorphism. Thus, if a
$\QCOHSTK[\bflat]{X}{S}$-extension $({i},\phi) : (T,\shv{M}) \to
(T',\shv{M}')$ exists, the top map must be an isomorphism. This is
how we will describe our first obstruction. 
\begin{ex}\label{ex:non_flat_non_split}
  This obstruction can be
  non-trivial when $f$ is not flat and ${i}$ is not split. Indeed,
  let $S = \spec \C[x,y]$ and take $0=(x,y)\in |S|$ to be the
  origin. Set $X = \mathrm{Bl}_{0}S=\underline{\mathrm{Proj}}_S
  \Orb_S[U,V]/(xV-yU)$, $f : X \to S$  
  the induced map, and let $E = f^{-1}(0)$ be the exceptional
  divisor. Now take $\shv{M} = \Orb_E$ and consider the $S$-extension $T=\spec
  \kappa(0) \hookrightarrow T'=\spec \C[x,y]/(x^2,y)$. A
  straightforward calculation shows that $\shv{M}\tensor_{\Orb_{X_T}}
  J$ is the skyscraper sheaf supported at the point of $E$
  corresponding to the $y=0$ line in $S$. Also, $f_T^*I = \Orb_{X_T}$
  and so $\shv{M} \tensor_{\Orb_{X_T}} f_T^*I \cong \Orb_E$. The
  resulting map $\shv{M} \tensor_{\Orb_{X_T}} f_T^*I \to \shv{M}
  \tensor_{\Orb_{X_T}} J$ is not injective. 
\end{ex}
Observe that there is a short exact sequence of $\Orb_{T'}$-modules:
\begin{equation*}
\xymatrix{0 \ar[r] & {i}_*I \ar[r] & \Orb_{T'} \ar[r] &
  {i}_*\Orb_T \ar[r] & 0.}\label{eq:nonsplit}
\end{equation*}
By Theorem \ref{thm:tor} we obtain an exact sequence of
quasicoherent $\Orb_{X_{T'}}$-modules:
\[
\xymatrix{\STor^{S,\tau',f}_1({i}_*\Orb_T,\Orb_X) 
  \ar[r] &  f_{T'}^*{i}_*I \ar[r] &
  {j}_*J \ar[r] & 0.} 
\]
Since we have a functorial isomorphism $f_{T'}^*{i}_*I \cong
{j}_*f_T^*I$, by Lemma \ref{lem:aff_tor} we obtain
a natural exact sequence of quasicoherent $\Orb_{X_T}$-modules: 
\[
\xymatrix{\STor^{S,\tau,f}_1(\Orb_T,\Orb_X) 
  \ar[r] &  f_{T}^*I \ar[r] &
  J \ar[r] & 0.} 
\]
Applying the functor $\shv{M}\tensor_{\Orb_{X_T}}-$ to this sequence 
produces another exact sequence:  
\[
\xymatrix@-.4pc{ \shv{M}   {\tensor}_{\Orb_{X_T}}
  \STor^{S,\tau,f}_1(\Orb_T,\Orb_X)  
  \ar[rrr]^-{\mathrm{o}^1((T,\shv{M}),I)({i})}
  & &  & \shv{M} {\tensor}_{\Orb_{X_T}} f_T^*I \ar[r] &
  \shv{M}{\tensor}_{\Orb_{X_T}} J \ar[r] & 0.}
\]
Thus, we have defined a natural class
\[
\mathrm{o}^1((T,\shv{M}),I)({i}) \in
\Hom_{\Orb_{X_T}}( \shv{M} \tensor_{\Orb_{X_T}} \STor^{S,\tau,f}_1(\Orb_T,\Orb_X) 
, \shv{M} \tensor_{\Orb_{X_T}} f_T^*I),
\]
whose vanishing is necessary and sufficient for the map
$\shv{M}\tensor_{\Orb_{X_T}} f_T^*I \to \shv{M}\tensor_{\Orb_{X_T}} J$ to
be an isomorphism. By functoriality of the class
$\mathrm{o}^1((T,\shv{M}),I)({i})$, we
obtain a natural transformation of functors:  
\begin{align*}
  \mathrm{o}^1((T,\shv{M}),-) : \Exal_S(T,-)  \Rightarrow
  \Hom_{\Orb_{X_T}}( \shv{M} 
  {\tensor}_{\Orb_{X_T}} \STor^{S,\tau,f}_1(\Orb_T,\Orb_X) , \shv{M}
  {\tensor}_{\Orb_{X_T}} f_T^*(-)).
\end{align*}
So, suppose that the $S$-extension ${i} : T\hookrightarrow T'$ now
has the property that the map $\shv{M}\tensor_{\Orb_{X_T}} f_T^*I \to \shv{M}\tensor_{\Orb_{X_T}}
J$ is an isomorphism. Let $\gamma_{\shv{M},I}$ denote the inverse to
this map, then \cite[IV.3.1.12]{MR0491680} gives a naturally defined
obstruction:  
\[
\mathrm{o}^2((T,\shv{M}),I)({i}) \in
\Ext^2_{{j}_*\Orb_{X_T}}({j}_*\shv{M},
{j}_*(\shv{M}\tensor_{\Orb_{X_T}} f_T^*I)) \cong
\Ext^2_{\Orb_{X_T}}(\shv{M},  
 \shv{M}\tensor_{\Orb_{X_T}} f_T^*I)
\]
whose vanishing is a necessary and sufficient condition for there to
exist a lift of $\shv{M}$ over $T'$. Thus, there is a natural
transformation 
\[
\mathrm{o}^2((T,\shv{M}),-) : \ker
\mathrm{o}^1((T,\shv{M}),-) \Rightarrow
\Ext^2_{\Orb_{X_T}}(\shv{M},\shv{M} \tensor_{\Orb_{X_T}} f_T^*(-))
\]
such that the pair
$\{\mathrm{o}^1((T,\shv{M}),-),\mathrm{o}^2((T,\shv{M}),-)\}$
defines a $2$-step obstruction theory for the $S$-groupoid
$\QCOHSTK[\bflat]{X}{S}$ at $(T,\shv{M})$. 

In the case where ${i} = \trvext{T}{I} : T \hookrightarrow
T\extn{I}$, the trivial $X$-extension of $T$ by $I$, then the map
$\shv{M}\tensor_{\Orb_{X_T}} f_T^*I \to \shv{M}\tensor_{\Orb_{X_T}} J$ 
is an isomorphism. By \cite[IV.3.1.12]{MR0491680}, we obtain natural
isomorphisms of abelian groups:  
\begin{align*}
  \Aut_{\QCOHSTK[\bflat]{X}{S}/S}((T,\shv{M}),I) &\cong
  \Hom_{{j}_*\Orb_{X_T}}({j}_* \shv{M},{j}_*
  (\shv{M}\tensor_{\Orb_{X_T}} f^*_TI)),  \\
  &\cong \Hom_{\Orb_{X_T}}(\shv{M},\shv{M}\tensor_{\Orb_{X_T}} f^*_TI),\\
  \Def_{\QCOHSTK[\bflat]{X}{S}/S}((T,\shv{M}),I) &\cong
  \Ext^1_{{j}_*\Orb_{X_T}}({j}_* \shv{M},{j}_*
  (\shv{M}\tensor_{\Orb_{X_T}} f^*_TI)),\\
  &\cong   \Ext^1_{\Orb_{X_T}}(\shv{M},\shv{M}\tensor_{\Orb_{X_T}}
  f^*_TI). 
\end{align*}
In \cite{hall_obstruction_thy}, using simplicial techniques, 
we will exhibit a $1$-step obstruction theory for
$\QCOHSTK[\bflat]{X}{S}$.
\begin{proof}[Proof of Theorem \ref{thm:flshvs_alg}]
  Using standard reductions \cite[App.\ B]{rydh-2009}, we are free to
  assume that $f$ is, in addition, finitely presented, and the
  scheme $S$ is affine and 
  of finite type over $\spec \Z$ (in particular, it is noetherian and 
  excellent). We now verify   
  the conditions of Theorem \ref{mainthms:repcrit2}. Certainly, the
  $S$-groupoid $\COHSTK{X}{S}$ is a limit
  preserving \'etale stack over $S$. By Lemma \ref{lem:flshvs_homog},
  we know that it is also $\HA$-homogeneous. Consider a noetherian
  local ring $(B,\mathfrak{m})$, which is $\mathfrak{m}$-adically
  complete, and a map $\spec B \to S$, then the 
  canonical functor: 
  \[
  \QCOH[\bflat,\fp,\bcompact]{X_{\spec B}} \to
  \varprojlim_n \QCOH[\bflat,\fp,\bcompact]{X_{\spec (B/\mathfrak{m}^n)}},
  \]
  is an equivalence of categories \cite[Thm.\
  1.4]{MR2183251}. Let $(T,\shv{M})$ be a
  $\COHSTK{X}{S}$-scheme, then we have determined that:
  \begin{align*}
    \Aut_{\COHSTK{X}{S}/S}((T,\shv{M}),-) &=
    \Hom_{\Orb_{X_T}}(\shv{M},\shv{M}\tensor_{\Orb_{X_T}} f_T^*(-)), \\
    \Def_{\COHSTK{X}{S}/S}((T,\shv{M}),-) &=
    \Ext^1_{\Orb_{X_T}}(\shv{M},\shv{M}\tensor_{\Orb_{X_T}}
    f_T^*(-)),\\
    \mathrm{O}^1((T,\shv{M}),-) &=
    \Hom_{\Orb_{X_T}}(   \shv{M} \underset{\Orb_{X_T}}{\tensor}
    \STor^{S,\tau,f}_1(\Orb_T,\Orb_X) , \shv{M} \underset{\Orb_{X_T}}{\tensor}
    f_T^*(-)), \\ 
    \mathrm{O}^2((T,\shv{M}),-) &=
    \Ext^2_{\Orb_{X_T}}(\shv{M},\shv{M}\tensor_{\Orb_{X_T}} f_T^*(-)),
  \end{align*}
  where $\{
  \mathrm{O}^1((T,\shv{M}),-),
  \mathrm{O}^2((T,\shv{M}),-)\}$
  are the obstruction spaces for a $2$-step obstruction theory. If $T$
  is assumed to be locally noetherian, then by Theorem \ref{thm:tor},
  the $\Orb_{X_T}$-module $\STor^{S,\tau,f}_1(\Orb_T,\Orb_X)$ is
  coherent. In addition, if $T$ is affine, \cite[Thm.\
  C]{hallj_coho_bc} implies that the functors 
  listed above are 
  coherent. Having met the conditions of Theorem
  \ref{mainthms:repcrit2}, we see that the $S$-groupoid
  $\QCOHSTK[\bflat,\fp,\bcompact]{X}{S}$ is algebraic and locally of
  finite presentation over $S$. 

  It remains to show that the diagonal of
  $\COHSTK{X}{S}$ is affine. Let
  $(T,\shv{M})$, $(T,\shv{N})$ be
  $\COHSTK{X}{S}$-schemes, then the commutative diagram in the
  category of $T$-presheaves:
  \[
  \xymatrix@C-0.85pc{\Isomstk_{\QCOHSTK{X}{S}}((T,\shv{M}),(T,\shv{N}))
    \ar[d]_{\lambda\mapsto (\lambda,\lambda^{-1})} \ar[r]
    & \Hom_T(-,T) \ar[d]^{(\ID{\shv{M}}, \ID{\shv{N}})}\\
    \Homstk_{\Orb_{X_T}/T}(\shv{M},\shv{N}) \times 
    \Homstk_{\Orb_{X_T}/T}(\shv{N},\shv{M}) \ar[r] &
    \Homstk_{\Orb_{X_T}/T}(\shv{M},\shv{M}) \times 
    \Homstk_{\Orb_{X_T}/T}(\shv{N},\shv{N}),}  
  \]
  where the morphism along the base is $(\mu,\nu) \mapsto (\nu\circ
  \mu, \mu\circ \nu)$, is cartesian. By \cite[Thm.\ D]{hallj_coho_bc}
  we deduce the result.  
\end{proof}
\section{Application II: the Hilbert stack and spaces of
  morphisms}\label{sec:example_hs} 
Fix a scheme $S$ and a $1$-morphism of algebraic stacks $f : X  
\to S$. For an $S$-scheme $T$, consider a property $P$ of a 
morphism $Z  \to X_T$. Such properties $P$ could be (but not
limited to):
\begin{enumerate}
\item[\textbf{qf}] -- quasi-finite,
\item[\textbf{lfpb}] -- the composition $Z \to X_T \to T$ is locally of
  finite presentation, 
\item[\textbf{prb}] -- the composition $Z \to X_T \to T$ is proper,
\item[\textbf{flb}] -- the composition $Z \to X_T \to T$ is flat.
\end{enumerate}
Define $\MOR[P]{X}{S}$ to be  
the category with objects pairs $(T, Z \xrightarrow{g} X_T)$, where
$T$ is an $S$-scheme and $g : Z \to X_T$ is a \emph{representable}
morphism of algebraic $S$-stacks that is $P$. Morphisms 
$(p,\pi) : (V,W \xrightarrow{h} X_V) \to (T,Z \xrightarrow{g}
X_T)$ in the category $\MOR[P]{X}{S}$ are $2$-cartesian diagrams: 
\[
\xymatrix@-0.4pc{W \ar[d]_{\pi} \ar[r]^h  & 
  X_V \ar[r]^{f_V} \ar[d]_{p_{X_T}} & V \ar[d]^p 
  \ar[d] \\ Z \ar[r]_g & X_T \ar[r]_{f_T} & T.} 
\]
If the property $P$ is reasonably well-behaved, the natural functor
$\MOR[P]{X}{S} \to \SCH{S}$ defines an $S$-groupoid. We define the
\fndefn{Hilbert stack}, $\HS{X}{S}$, to be the $S$-groupoid
$\MOR[\bflat,\blfp,\bcompact,\qf]{X}{S}$. This Hilbert stack contains
A.~Vistoli's Hilbert stack \cite{MR1138256} as well as the stack of
branchvarieties \cite{MR2608190}. We will prove the following
Theorem. 
\begin{thm}\label{thm:sep_hilbert_stack_alg}
  Fix a scheme $S$ and a morphism of algebraic stacks $f : X \to
  S$, which is separated and locally of finite presentation. Then, 
  $\HS{X}{S}$ is an 
  algebraic stack, locally of finite presentation over $S$, with
  affine diagonal over $S$.     
\end{thm}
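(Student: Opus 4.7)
The plan is to verify the hypotheses of Theorem \ref{mainthms:repcrit2}, following the template established in Section \ref{sec:example_qcoh}. The crucial observation is that because $f$ is separated, a representable quasi-finite morphism $g : Z \to X_T$ whose composition with $f_T$ is proper is automatically finite: indeed, $Z \to T$ proper and $X_T \to T$ separated force $g$ itself to be proper, and a proper quasi-finite morphism of algebraic stacks is finite. Consequently, an object of $\HS{X}{S}$ over $T$ is equivalent to a finitely presented, $T$-flat, commutative $\Orb_{X_T}$-algebra $\shv{A} = g_*\Orb_Z$ with proper support over $T$. This reduces the theorem to a minor enrichment of the analysis carried out for $\COHSTK{X}{S}$ in Section \ref{sec:example_qcoh}, with commutative algebra structures imposed on the objects.

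After the standard noetherian approximation used in the proof of Theorem \ref{thm:flshvs_alg}, one may assume $S$ is affine, excellent, of finite type over $\spec \Z$, and that $f$ is of finite presentation. The stack and limit preservation conditions are standard, while $\HA$-homogeneity is verified by rerunning the proof of Lemma \ref{lem:flshvs_homog} at the level of algebras: given compatible flat algebras $\shv{A}_1$, $\shv{A}_2$, $\shv{A}_0$ over a pushout $T_3 = T_1 \amalg_{T_0} T_2$, the sheaf $\shv{A}_3 := \shv{A}_1 \times_{\shv{A}_0} \shv{A}_2$ inherits a commutative $\Orb_{X_{T_3}}$-algebra structure, is $T_3$-flat by Ferrand's theorem \cite[Thm.~2.2]{MR2044495}, and is finite over $X_{T_3}$, supplying the required pushout. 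Effectivity follows from Grothendieck's existence theorem for finite flat algebras on algebraic stacks, via \cite[Thm.~1.4]{MR2183251}.

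For automorphisms, deformations, and obstructions at $(T, g : Z \to X_T)$, the reasoning parallels Section \ref{sec:example_qcoh} but applied to the morphism $g$ rather than to a single sheaf. Given an $S$-extension $T \hookrightarrow T'$ of $T$ by $I$, lifting corresponds to producing a finite $T'$-flat extension of $Z$ inside $X_{T'}$. Non-flatness of $f$ yields a first obstruction
\[
\mathrm{o}^1((T,g),-) : \Exal_S(T,-) \Rightarrow \Hom_{\Orb_Z}\bigl(g^*\STor^{S,\tau,f}_1(\Orb_T,\Orb_X),\, g^*f_T^*(-)\bigr),
\]
defined exactly as in \S\ref{sec:example_qcoh} applied to the coherent algebra $g_*\Orb_Z$. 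Once this vanishes, Illusie's theory \cite[IV.3.1.12]{MR0491680} produces the second step, with obstruction space
\[
\mathrm{O}^2((T,g),I) = \Ext^2_{\Orb_Z}(L_{Z/X_T},\, g^*f_T^*I),
\]
and with automorphism and deformation functors given respectively by $\Hom_{\Orb_Z}(L_{Z/X_T}, g^*f_T^*(-))$ and $\Ext^1_{\Orb_Z}(L_{Z/X_T}, g^*f_T^*(-))$. Because the composition $Z \to T$ is proper, flat, and of finite presentation, and $L_{Z/X_T}$ has coherent cohomology, all four of these functors are coherent by Example \ref{ex:coh_func_thm}, producing the required coherent $2$-step obstruction theory.

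The main obstacle I expect is checking that the flatness obstruction $\mathrm{o}^1$ and the Illusie obstruction combine functorially into a $2$-step relative obstruction theory in the precise sense of \S\ref{sec:def_obs_comp}: one must identify $\ker \mathrm{o}^1$ with exactly the subfunctor of extensions on which the cotangent-complex obstruction is defined, a compatibility that is geometrically transparent but requires careful bookkeeping when $X \to S$ is non-flat and $Z \to X_T$ is not smooth. Finally, to see that the diagonal of $\HS{X}{S}$ is affine, note that for two objects $(V, W \to X_V)$ and $(V, Z \to X_V)$ the sheaf $\Isomstk_{\HS{X}{S}/S}((V,W),(V,Z))$ fits into a cartesian diagram of $\Homstk$-stacks of coherent sheaves on $X_V$, verbatim as at the end of the proof of Theorem \ref{thm:flshvs_alg}, and each such Hom-stack is affine over $V$ by \cite[Thm.~D]{hallj_coho_bc}.
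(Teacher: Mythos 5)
Your proposal is correct, and it lands on the same two-step obstruction theory
($\mathrm{o}^1$ valued in
$\Hom_{\Orb_Z}(g^*\STor^{S,\tau,f}_1(\Orb_T,\Orb_X),g^*f_T^*(-))$, then
$\mathrm{o}^2$ valued in $\Ext^2_{\Orb_Z}(L_{Z/X_T},g^*f_T^*(-))$), the same
$\Aut$/$\Def$ description via $L_{Z/X_T}$, and essentially the same diagonal
argument (cutting the algebra-compatible isomorphisms out of the affine
$\Isomstk$ of the underlying coherent modules) as the paper. Where you genuinely
diverge is in how you verify homogeneity and effectivity: you first use
separatedness of $f$ to observe that every object $g:Z\to X_T$ of $\HS{X}{S}$ is
proper over $X_T$, hence finite, and then work throughout with the finite flat
$\Orb_{X_T}$-algebra $g_*\Orb_Z$ --- this is the route via the stack of coherent
algebras that the paper attributes to Lieblich and explicitly declines to take.
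The paper instead proves $\HA$-homogeneity for the much larger groupoid
$\MOR[\bflat]{X}{S}$, where $Z$ is an arbitrary algebraic stack flat over $T$
with no finiteness hypothesis; this forces it to invoke the geometric pushouts
of algebraic stacks along nilimmersions and affine morphisms constructed in
Appendix \ref{app:schlessinger_pushouts} (Proposition
\ref{prop:schlessinger_pushouts_new} and Lemma \ref{lem:sadm_fl_loc_SA3}),
after which the Hilbert stack inherits homogeneity along the formally \'etale
inclusions $\MOR[\bflat,\blfp,\bcompact,\qf]{X}{S}\subset\MOR[\bflat]{X}{S}$.
Your reduction buys a substantially more elementary homogeneity check --- all
the $Z_i\to X_{T_i}$ are affine, so the pushout is literally a fibre product of
quasicoherent algebra sheaves handled by Ferrand's theorem exactly as in Lemma
\ref{lem:flshvs_homog} --- at the cost of being specific to the quasi-finite
separated situation, whereas the paper's version applies uniformly to
$\MOR[\bflat]{X}{S}$ and hence also directly underlies Corollary
\ref{cor:hom_stk_alg}. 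The one point you rightly flag as delicate, namely that
the $\STor_1$-obstruction and the Illusie obstruction splice into a two-step
obstruction theory in the precise sense of \S\ref{sec:def_obs_comp}, is glossed
over at exactly the same level of detail in the paper, so it is not a gap
relative to the paper's own argument.
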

Theorem \ref{thm:sep_hilbert_stack_alg} was the result alluded to in
the title of M. Lieblich's paper \cite{MR2233719}, though a precise
statement was not given. Theorem \ref{thm:sep_hilbert_stack_alg} was
established in \opcit using an auxillary representability result
\opcit[, Prop.\ 2.3] combined with \opcit[, Thm.\ 2.1] (Theorem
\ref{thm:flshvs_alg}). In the non-flat case, the obstruction theory
used in \opcit[, Proof of Prop.\ 2.3] is incorrect (a variant of
Example \ref{ex:non_flat_non_split} can be made into a counterexample
in this setting also). The stated obstruction theory can be made into
the second step of a 2-step obstruction theory, however. The
properties of the diagonal of $\HS{X}{S}$ have not been addressed
previously. We would like to reiterate what was stated in the
Introduction: the just mentioned errors have no net effect on the main
ideas of the articles. 
\begin{cor}\label{cor:hom_stk_alg}
  Fix a scheme $S$, and morphisms of algebraic stacks $f : X \to S$
  and $g : Y \to S$. Let $f$ be locally of finite
  presentation, proper, and flat; and $g$ locally of finite
  presentation with finite diagonal. Then, $\underline{\Hom}_S(X,Y)$
  is an algebraic stack, locally of finite 
  presentation over $S$, with affine diagonal over $S$.   
\end{cor}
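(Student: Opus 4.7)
The plan is to realize $\underline{\Hom}_S(X,Y)$ as an open substack of the Hilbert stack $\HS{X\times_S Y}{S}$ via the classical graph construction, and then to deduce both algebraicity and the affine-diagonal property from Theorem \ref{thm:sep_hilbert_stack_alg}. Set $H := X\times_S Y$. First I would check that $H\to S$ satisfies the hypotheses of Theorem \ref{thm:sep_hilbert_stack_alg}: it is locally of finite presentation since $f$ and $g$ are, and separated because $f$ is proper (hence separated) and $\Delta_g$ is finite (hence $g$ is separated). Theorem \ref{thm:sep_hilbert_stack_alg} then yields that $\HS{H}{S}$ is an algebraic $S$-stack, locally of finite presentation over $S$, with affine diagonal over $S$.

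Next I would define a $1$-morphism $\Phi\colon \underline{\Hom}_S(X,Y) \to \HS{H}{S}$ sending $\phi\colon X_T \to Y_T$ to its graph $\Gamma_\phi := (\ID{X_T},\phi)\colon X_T \to H_T$. Since $\mathrm{pr}_1\colon H_T \to X_T$ is the base change of the separated morphism $g$, the section $\Gamma_\phi$ is a closed immersion, hence representable and quasi-finite. The composition $X_T\to H_T\to T$ is $f_T$, which is proper, flat, and locally of finite presentation, so $\Gamma_\phi$ is a valid object of $\HS{H}{S}(T)$.

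The main content is to show that $\Phi$ is representable by open immersions. Given $(h\colon Z \to H_T) \in \HS{H}{S}(T)$, set $\pi := \mathrm{pr}_1\circ h\colon Z \to X_T$, which is a proper morphism (by cancellation, since $X_T/T$ is separated and $Z/T$ is proper) between two $T$-proper, $T$-flat, locally finitely presented algebraic stacks. The $2$-fiber product $\underline{\Hom}_S(X,Y)\times_{\HS{H}{S}} T$ should be represented by the locus $T^{\mathrm{iso}} \subset T$ of points where $\pi$ is an isomorphism: over $T^{\mathrm{iso}}$, inverting $\pi$ and composing with $\mathrm{pr}_2\circ h$ yields a morphism $X_{T^{\mathrm{iso}}} \to Y_{T^{\mathrm{iso}}}$ whose graph recovers $h$. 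The hard part will be to verify that $T^{\mathrm{iso}}$ is open in $T$: this is the classical fiberwise criterion for a proper morphism of flat, proper, finitely presented families to be an isomorphism, proved via Nakayama's lemma and cohomology-and-base-change applied to $\pi_*\Orb_Z$, and extended to the stack setting by passing to a smooth chart of $X_T$.

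Once openness is established, $\Phi$ is an open immersion, hence in particular a monomorphism with trivial diagonal; combined with the affine diagonal of $\HS{H}{S}\to S$, this yields the affine diagonal of $\underline{\Hom}_S(X,Y)\to S$, and simultaneously the algebraicity and local finite presentation over $S$.
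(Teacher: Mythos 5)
Your proposal is correct and follows exactly the route the paper intends: the paper omits the proof of Corollary \ref{cor:hom_stk_alg}, citing only ``the standard method of associating to a morphism its graph'' together with Theorem \ref{thm:sep_hilbert_stack_alg}, and your argument fleshes out precisely that reduction (including the verification that $X\times_S Y \to S$ is separated and locally of finite presentation, and the openness of the locus where $\mathrm{pr}_1\circ h$ is an isomorphism). One cosmetic point: since $Y$ may have nontrivial stabilizers, the graph $\Gamma_\phi$ --- being a pullback of the finite diagonal $\Delta_{Y_T/T}$ --- is finite rather than a closed immersion, but finiteness already supplies the representability and quasi-finiteness required for membership in $\HS{X\times_S Y}{S}$.
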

Corollary \ref{cor:hom_stk_alg} can be used in the construction of the
stack of twisted stable maps \cite[Prop.\ 4.2]{MR2786662}. The
original construction of the stack of twisted stable maps utilized an
incorrect obstruction theory in the non-flat case \cite[Lem.\
5.3.3]{MR1862797}. The original proof of Corollary
\ref{cor:hom_stk_alg}, due to M. Aoki
\cite[\S3.5]{MR2194377,MR2258535}, also has an incorrect obstruction theory
in the case of a non-flat target. The stated obstruction theories, as
before, can be realized as the second step of a 2-step obstruction
theory. A variant of Example \ref{ex:non_flat_non_split} can be made
into counterexamples in these settings too. We would like to reiterate
what was stated in the Introduction and above: the just mentioned
errors have no net effect on the main ideas of the articles. 

To prove Theorem
\ref{thm:sep_hilbert_stack_alg}, we will apply Theorem
\ref{mainthms:repcrit2} directly (though as mentioned previously, this
could be done as in \cite{MR2233719} using Theorem
\ref{thm:flshvs_alg}).  With Theorem \ref{thm:sep_hilbert_stack_alg} 
proven it is easy to deduce Corollary 
\ref{cor:hom_stk_alg} via the standard method of associating to a
morphism its graph, thus the proof is omitted. Now, just as in
\S\ref{sec:example_qcoh}, there are inclusions:  
\[
\MOR[\bflat,\blfp,\bcompact,\qf]{X}{S} \subset 
\MOR[\bflat,\blfp,\bcompact]{X}{S} \subset 
\MOR[\bflat,\blfp]{X}{S} \subset
\MOR[\bflat]{X}{S}.
\]
The first two inclusions are trivially formally \'etale. By Lemma
\ref{lem:fl_nilimmers}, the third inclusion is formally \'etale.
Thus, by Lemma
\ref{lem:homog_prop}(\ref{lem:homog_prop:item:comp}\&\ref{lem:homog_prop:item:fet})
they will all be $\HA$-homogeneous if we can show that the
$S$-groupoid $\MOR[\bflat]{X}{S}$ is $\HA$-homogeneous. Also, by
Lemmata \ref{lem:def_fet} and \ref{lem:obs_fet}, descriptions of the
automorphisms, deformations, and obstructions for $\MOR[\bflat]{X}{S}$
descend to the subcategories listed above.
\begin{lem}
  Fix a scheme $S$ and a morphism of algebraic stacks $f : X \to
  S$. Then, the $S$-groupoid $\MOR[\bflat]{X}{S}$ is
  $\HA$-homogeneous. 
\end{lem}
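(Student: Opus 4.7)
The plan is to mirror the proof of Lemma \ref{lem:flshvs_homog} by reducing both homogeneity conditions to the construction and flatness properties of pushouts in the category of algebraic stacks, which are supplied by Appendix \ref{app:schlessinger_pushouts}. Throughout, I fix a diagram of $\MOR[\bflat]{X}{S}$-schemes
\[
[(T_1, Z_1 \xrightarrow{g_1} X_{T_1}) \xleftarrow{(p,\pi)} (T_0, Z_0 \xrightarrow{g_0} X_{T_0}) \xrightarrow{({i},\iota)} (T_2, Z_2 \xrightarrow{g_2} X_{T_2})]
\]
with $p$ affine and ${i}$ a locally nilpotent closed immersion. Since $(p,\pi)$ and $({i},\iota)$ are $2$-cartesian in the definition of $\MOR{X}{S}$, the induced morphism $Z_0 \to Z_1$ is affine and $Z_0 \hookrightarrow Z_2$ is a locally nilpotent closed immersion.

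For $(\mathrm{H}_1^{\HA})$, given a commutative completion of the above diagram in $\MOR[\bflat]{X}{S}$, the ``only if'' direction follows immediately from Lemma \ref{lem:homog_prop}\itemref{lem:homog_prop:item:univ}, because $\SCH{S}$ is trivially $\HA$-homogeneous. The ``if'' direction reduces to showing that if the underlying square of $S$-schemes is cocartesian, then any compatible pair of $\MOR[\bflat]{X}{S}$-morphisms $(T_k, Z_k) \to (W, Z_W)$ for $k = 1, 2$ that agree after pullback to $(T_0, Z_0)$ factors through a unique morphism from the completion; this will follow once one knows that $Z_3$, formed as below, is indeed a pushout of $[Z_1 \leftarrow Z_0 \to Z_2]$ in the $2$-category of algebraic stacks, which is the content of Proposition \ref{prop:schlessinger_pushouts_new}.

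For $(\mathrm{H}_2^{\HA})$, I would first invoke Lemma \ref{lem:homog_pushouts_int} applied to $\SCH{S}$ to form the scheme-theoretic pushout $T_3$ with ${i}' : T_2 \hookrightarrow T_3$ a locally nilpotent closed immersion and $p' : T_1 \to T_3$ affine. Affine-locally one checks that $X_{T_3}$ is the pushout of $[X_{T_1} \leftarrow X_{T_0} \to X_{T_2}]$ in algebraic stacks: in the affine case with $T_k = \spec A_k$, the base change $X \times_S \spec(A_1\times_{A_0}A_2)$ computes smooth-locally on $X$ to $D \tensor_A (A_1 \times_{A_0} A_2)$, and when $D$ is $A$-flat (or more generally, using Appendix \ref{app:tor}) this coincides with $(D\tensor_A A_1) \times_{(D\tensor_A A_0)} (D\tensor_A A_2)$. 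Applying Proposition \ref{prop:schlessinger_pushouts_new} to the diagram $[Z_1 \leftarrow Z_0 \hookrightarrow Z_2]$ of algebraic stacks yields a pushout $Z_3$ equipped with a canonical morphism $g_3 : Z_3 \to X_{T_3}$ supplied by the universal property. The maps $Z_1 \to Z_3$ and $Z_2 \to Z_3$ are affine and a locally nilpotent closed immersion respectively, and the appendix also gives that the resulting squares are $2$-cartesian over the corresponding squares of $X_{T_\bullet}$, so they define $\MOR{X}{S}$-morphisms.

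The remaining and principal technical obstacle is to verify that $Z_3 \to T_3$ is flat. This claim is smooth-local on $Z_3$ and on $X_{T_3}$, so I would reduce to the case where $S$, $T_k$, $Z_k$ (for $k=0,1,2$), and a smooth chart of $X$ are all affine, say $T_k = \spec A_k$ and $Z_k = \spec R_k$ with each $R_k$ flat over $A_k$. The pushout is then $T_3 = \spec(A_1 \times_{A_0} A_2)$ and $Z_3 = \spec(R_1 \times_{R_0} R_2)$, and the required flatness is precisely \cite[Thm.~2.2]{MR2044495} (this is the same input used at the end of the proof of Lemma \ref{lem:flshvs_homog}). Once flatness is in hand, one concludes by combining it with the $2$-cartesianness of the canonical squares, which simultaneously certifies representability of $g_3 : Z_3 \to X_{T_3}$ (since representability is preserved by the pushout construction of the appendix, and can in any case be checked smooth-locally on $X_{T_3}$ where it reduces to the $Z_k \to X_{T_k}$).
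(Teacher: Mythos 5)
Your proposal follows essentially the same route as the paper: form the scheme-theoretic pushout $T_3$, apply Proposition \ref{prop:schlessinger_pushouts_new} to $[Z_1 \leftarrow Z_0 \hookrightarrow Z_2]$ to get $Z_3$, obtain $g_3 : Z_3 \to X_{T_3}$ from the $2$-cocartesian property, and get flatness of $Z_3 \to T_3$ together with the $2$-cartesianness of the relevant faces from Ferrand's theorem (the paper packages this last step as Lemma \ref{lem:sadm_fl_loc_SA3} rather than redoing the affine reduction by hand).

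One side remark in your argument is false and worth flagging, precisely because it is the non-flatness subtlety this paper emphasizes: $X_{T_3}$ is \emph{not} in general the geometric pushout of $[X_{T_1} \leftarrow X_{T_0} \to X_{T_2}]$ when $f$ is not flat, since $D \tensor_A (A_1 \times_{A_0} A_2)$ differs from $(D\tensor_A A_1)\times_{(D\tensor_A A_0)}(D\tensor_A A_2)$ by a $\Tor_1$ term (compare Example \ref{ex:non_flat_non_split}); Appendix \ref{app:tor} does not remove this discrepancy. Fortunately the claim is not load-bearing: to produce $g_3$ you only need that $X_{T_3}$ receives compatible morphisms from $X_{T_0}$, $X_{T_1}$, $X_{T_2}$, which it does by functoriality of base change along $T_k \to T_3$, and then the $2$-cocartesian property of $Z_3$ alone furnishes the map. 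With that remark deleted, your proof is correct and matches the paper's.
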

\begin{proof}
  First we check $(\mathrm{H}_2^\HA)$. Fix a diagram of
  $\MOR[\bflat]{X}{S}$-schemes 
  \[
  [(T_1,Z_1 \xrightarrow{g_1} X_{T_1}) \xleftarrow{({i},\phi)}
  (T_0,Z_0 \xrightarrow{g_0} X_{T_0}) \xrightarrow{(p,\pi)} (T_2,Z_2
  \xrightarrow{g_2} X_{T_2}) ],
  \]
  where ${i}$ is a locally nilpotent closed immersion and $p$ is
  affine, and a 
  cocartesian square of $S$-schemes:
  \[
  \xymatrix@-0.8pc{T_0 \ar@{^(->}[r]^{i} \ar[d]_p & T_1 \ar[d]^{p'} \\ T_2
    \ar@{^(->}[r]^{{i}'} & T_3. }
  \]
  By Proposition \ref{prop:schlessinger_pushouts_new}, there exists a
  $2$-commutative diagram of algebraic $S$-stacks:
  \[
  \xymatrix@R-1.7pc{   & Z_0 \ar[dd]|!{[dl];[d]}\hole
    \ar[dl]_-{\pi} \ar[rr]|{\,\phi\,} & & 
    Z_1 \ar[dl]^{\pi'}
    \ar[dd] \\ Z_2 
    \ar[dd]  \ar[rr]|(0.64){\,\phi'\,} & & Z_3 \ar[dd]  &   \\   & T_0
    \ar[rr]|(0.36){\,{i}\,}|!{[r];[dr]}\hole
    \ar[dl]_p & & T_1 \ar[dl]^-{p'}\\ 
    T_2 \ar[rr]|{\,{i}'\,} & &T_3,  &   }
  \]
  where the left and rear faces of the cube are $2$-cartesian, and the
  top and 
  bottom faces are $2$-cocartesian in the $2$-category of algebraic
  $S$-stacks. Thus, the universal properties guarantee the existence
  of a unique $T_3$-morphism $Z_3 \xrightarrow{g_3} X_{T_3}$. By Lemma
  \ref{lem:sadm_fl_loc_SA3}, the morphism $Z_3 \to T_3$ is flat and
  all faces of the cube are $2$-cartesian. In particular, the resulting 
  $\MOR[\bflat]{X}{S}$-scheme diagram
    \[
  \xymatrix@R-1pc{(T_0,Z_0 \xrightarrow{g_0} X_{T_0}) \ar[d]_{(p,\pi)}
    \ar@{^(->}[r]^{({i},\phi)}  & 
    (T_1,Z_1 \xrightarrow{g_1} X_{T_1}) \ar[d]^{(p',\pi')} \\   (T_2,Z_2
    \xrightarrow{g_2} 
    X_{T_2}) \ar@{^(->}[r]^{({i}',\phi')} & 
    (T_3,Z_3 \xrightarrow{g_3} X_{T_3}), }
  \]
  is cocartesian in the category of $\MOR{X}{S}$-schemes. Condition
  $(\mathrm{H}_1^\HA)$ follows from a similar argument as that given
  in Lemma \ref{lem:flshvs_homog}.   
\end{proof}
Fix a $\MOR[\bflat]{X}{S}$-scheme $(T,Z \xrightarrow{g} X_T)$ and a
quasicoherent $\Orb_T$-module $I$. Then, unravelling the definitions
and applying the results of \cite{MR2206635}, demonstrates that there
are natural isomorphisms of abelian groups: 
\begin{align*}
  \Aut_{\MOR[\bflat]{X}{S}/S}((T,Z \xrightarrow{g} X_T),I) &\cong
  \Hom_{\Orb_Z}(L_{Z/X_T},g^*f_T^*I)\\
  \Def_{\MOR[\bflat]{X}{S}/S}((T,Z \xrightarrow{g} X_T),I) &\cong
  \Ext_{\Orb_Z}^1(L_{Z/X_T},g^*f_T^*I).  
\end{align*}
Using identical ideas to those developed in \S\ref{sec:example_qcoh},
together with \opcit, we obtain a $2$-term obstruction theory  for the
$S$-groupoid $\MOR[\bflat]{X}{S}$ at $(T,Z \xrightarrow{g} X_T)$:  
\begin{align*}
  \mathrm{o}^1((T,Z \xrightarrow{g} X_T),-)
  &: \Exal_S(T,-) \Rightarrow  
  \Hom_{\Orb_{Z}}(g^*\STor^{S,\tau,f}_1(\Orb_T,\Orb_X),g^*f_T^*(-))\\
  \mathrm{o}^2((T,Z \xrightarrow{g} X_T),-) &: \ker \mathrm{o}^1((T,Z
  \xrightarrow{g} X_T),-)\Rightarrow \Ext^2_{\Orb_Z}
  (L_{Z/X_T},g^*f_T^*(-)). 
\end{align*}
In \cite{hall_obstruction_thy}, using simplicial techniques, 
we will exhibit a $1$-step obstruction theory for
$\MOR[\bflat]{X}{S}$.
\begin{proof}[Proof of Theorem \ref{thm:sep_hilbert_stack_alg}]
  The proof that the $S$-groupoid $\HS{X}{S}$ is algebraic and locally
  of finite presentation is essentially identical to the proof of
  Theorem \ref{thm:flshvs_alg}, thus is omitted. It remains to show
  that the diagonal is affine. So, let $(T,Z_1\xrightarrow{g_1} X_T)$
  and $(T,Z_2\xrightarrow{g_2} X_T)$ be $\HS{X}{S}$-schemes, then
  the inclusion of $T$-presheaves: {\small\[
    \Isomstk_{\HS{X}{S}}((T,Z_1\xrightarrow{g_1} X_T),
    (T,Z_2\xrightarrow{g_2} X_T)) \subset
    \Isomstk_{\QCOHSTK{X}{S}}((T,(g_2)_*\Orb_{Z_2}),(T,(g_1)_*\Orb_{Z_1})),
  \]}
  \hspace{-1.8mm} is representable by closed immersions. By
  Theorem \ref{thm:flshvs_alg}, we deduce the result. 
\end{proof}
\appendix
\section{Homogeneity of stacks}\label{app:schlessinger_pushouts} 
The results of this section are routine bootstrapping arguments. They
are included so that $\HA$-homogeneity can be proved for moduli
problems involving stacks.
\begin{defn}
  Fix a $2$-commutative diagram of algebraic stacks 
  \[
  \xymatrix{X_0 \ar@{^(->}[r]^{{i}} \ar[d]_f
    \drtwocell<\omit>{^\alpha} & X_1 \ar[d]^{f'}\\ X_2
    \ar@{^(->}[r]_{{i}'} & X_3,}
  \]
  where ${i}$ and ${i}'$ are closed immersions and $f$ and $f'$
  are affine. If the induced map:
  \[
  \Orb_{X_3} \to {i}'_*\Orb_{X_2} \times_{({i}'f)_*\Orb_{X_0}}
  f'_*\Orb_{X_1}
  \]
  is an isomorphism of sheaves, then we say that the diagram is a 
  \fndefn{geometric pushout}, and that $X_3$
  is a \fndefn{geometric pushout} of the diagram $[X_2 \xleftarrow{f}
  X_0 \xrightarrow{{i}} X_1]$. 
\end{defn}
The main result of this section is the following
\begin{prop}\label{prop:schlessinger_pushouts_new}
  Any diagram of algebraic stacks $[X_2 \xleftarrow{f}
  X_0 \xrightarrow{{i}} X_1]$, where ${i}$ is a
  locally nilpotent closed immersion and $f$ is affine, admits a
  geometric pushout $X_3$. The 
  resulting geometric pushout diagram is $2$-cartesian and
  $2$-cocartesian in the $2$-category of algebraic stacks.  
\end{prop}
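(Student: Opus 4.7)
My plan is to bootstrap Ferrand's pushout theorem \cite[Thm.~7.1]{MR2044495} for affine schemes up to algebraic stacks by smooth descent, taking advantage of the fact that a locally nilpotent closed immersion induces an equivalence on smooth sites and preserves affineness under pullback. In the affine case, where $X_0 = \spec A_0$, $X_1 = \spec A_1$, $X_2 = \spec A_2$, I would set $A_3 = A_1 \times_{A_0} A_2$ and $X_3 = \spec A_3$; Ferrand's theorem then directly gives that $X_3$ is the geometric pushout in schemes, that ${i}'$ is a locally nilpotent closed immersion, that $f'$ is affine, and that the square is $2$-cartesian. The $2$-cocartesian property against a general algebraic-stack target $Y$ reduces to the scheme case by pulling back along a smooth atlas of $Y$.

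For the general case, I would choose a smooth surjection $V_1 \to X_1$ with $V_1$ affine. Setting $V_0 = V_1 \times_{X_1} X_0$, the fact that ${i}$ is a closed immersion makes $V_0$ an affine closed subscheme of $V_1$, while $V_0 \to X_0$ is a smooth surjection. Set $V_2 = V_0 \times_{X_0} X_2$; since $f$ is affine, $V_2 \to V_0$ is affine and hence $V_2$ is affine, and $V_2 \to X_2$ is a smooth surjection. Applying the affine case produces an affine pushout $V_3$. Repeating the construction on affine covers of $V_i \times_{X_i} V_i$ and $V_i \times_{X_i} V_i \times_{X_i} V_i$ yields pushouts $R_3$ and $P_3$, and the functoriality of Ferrand's construction under smooth base change on the $X_1$-side supplies source, target, and composition morphisms making $[R_3 \rightrightarrows V_3]$ into a smooth groupoid in algebraic spaces. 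Define $X_3$ to be its stack quotient.

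The maps $V_i \to V_3$ descend to $1$-morphisms $f' : X_1 \to X_3$ and ${i}' : X_2 \hookrightarrow X_3$. All the properties required for a geometric pushout---that ${i}'$ is a locally nilpotent closed immersion, that $f'$ is affine, the sheaf identity, and cartesianness of the square---are smooth-local on $X_3$ and hence descend from the affine case. For the $2$-cocartesian property in algebraic stacks, a compatible test datum $(X_1 \to Y, X_2 \to Y)$ pulls back to a compatible test datum on $(V_1, V_2, V_0)$; the affine pushout property produces a unique $V_3 \to Y$, and the analogous argument on $R_3$ shows that this map is $R_3$-equivariant, descending uniquely to a $1$-morphism $X_3 \to Y$.

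The main obstacle will be verifying that the affine Ferrand construction is sufficiently compatible with smooth base change on the $V_1$-side to yield a genuine smooth groupoid $[R_3 \rightrightarrows V_3]$. Concretely, this amounts to showing that formation of the ring-theoretic fiber product $A_1 \times_{A_0} A_2$ commutes with flat base change on $A_1$ (in the appropriate sense), with the nilpotent-invariance of smooth morphisms \cite[IV.18.1.2]{EGA} playing an essential role in transferring atlases across ${i}$. Once that compatibility is in place, the remaining verifications in Stages 1 and 3 are essentially formal.
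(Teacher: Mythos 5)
Your overall blueprint---reduce to Ferrand's affine pushout, build a compatible smooth affine atlas on the three corners, form the pushout of atlases, and descend---is the right one, but the step in which you construct the atlas contains a directional error that breaks the argument. You start from a smooth affine cover $V_1 \to X_1$, restrict it to $V_0 = V_1\times_{X_1} X_0$, and then write $V_2 = V_0\times_{X_0} X_2$. This fibre product does not exist: the affine morphism in the diagram is $f : X_0 \to X_2$, not a morphism $X_2 \to X_0$, so there is nothing to base change $V_0$ along. More seriously, the defect cannot be repaired by choosing $V_2$ some other way: a smooth cover of $X_0$ does not extend to a smooth cover of $X_2$ compatible with $f$ (take $f$ to be a closed immersion, which is affine---a smooth cover of $X_0$ sees nothing of $X_2$ away from $X_0$). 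The construction has to be run in the opposite order: choose a smooth affine cover of $X_2$ first, pull it back along the \emph{affine} morphism $f$ to get a compatible smooth affine cover of $X_0$, and then \emph{lift} that cover across the locally nilpotent closed immersion ${i}$ to a smooth affine cover of $X_1$. This last lifting is not given by the topological invariance of the \'etale site (\cite[IV.18.1.2]{EGA} applies to \'etale, not smooth, morphisms); it is a genuine deformation-theoretic statement, proved in Lemma~\ref{lem:lift_pres} by observing that the obstruction lies in $\Ext^2_{\Orb_U}(L_{U/X},M)$, which vanishes because $U$ is affine and $U \to X$ is smooth. Surjectivity of the lifted cover then uses that $|X_0| = |X_1|$.

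Two further points are underestimated in your sketch. First, the groupoid $[R_3 \rightrightarrows V_3]$ requires geometric pushouts of the double and triple fibre products $V_m\times_{X_m} V_m$, which are not affine in general; they are only ``one diagonal better'' than the $X_m$. This is why the paper runs an induction over the classes $\Coll_0 \subset \Coll_1 \subset \Coll_2 \subset \Coll_3$ of stacks with affine $d$-th diagonal, rather than a single pass through affine covers. Second, the $2$-cocartesian property against an arbitrary algebraic stack $Y$ does not follow by ``pulling back the test datum to $V_1, V_2, V_0$ and applying the affine pushout property'': the pulled-back datum still maps to the stack $Y$, not to an affine scheme, so one must separately prove existence and uniqueness (up to unique $2$-isomorphism) of the completion, which the paper does via an inertia/equalizer argument and descent along a smooth presentation of $Y$, again within the diagonal induction. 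These are fixable with more work, but the atlas construction as you have written it is the step that would actually fail.
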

We now need to collect some results which aid with the
bootstrapping process. 
\begin{lem}\label{lem:geom_po_ferr_new}
  Fix a $2$-commutative diagram of algebraic stacks:
    \[
  \xymatrix@-0.8pc{X_0 \ar@{^(->}[r]^{{i}} \ar[d]_f
    \drtwocell<\omit>{^} & X_1 \ar[d]^{f'}\\ X_2
    \ar@{^(->}[r]_{{i}'} & X_3}
  \]
  \begin{enumerate}
  \item\label{lem:geom_po_ferr_new:item:2cart} If the diagram is a
    geometric pushout diagram, then it is $2$-cartesian. 
  \item\label{lem:geom_po_ferr_new:item:flb_gpo} If the diagram above
    is a geometric pushout diagram, then it 
    remains so after flat base change on $X_3$.
  \item\label{lem:geom_po_ferr_new:item:fl_loc_gpo} If after
    fppf base change on $X_3$, the above diagram  
    is a geometric pushout diagram, then it was a geometric pushout prior to
    base change.
  \end{enumerate}
 \end{lem}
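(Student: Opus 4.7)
My plan is to establish the three parts in the order (2), (3), (1), because part (1) can be reduced to the case of affine schemes by the descent results in (2) and (3), at which point Ferrand's \cite[Thm.~7.1]{MR2044495} (also invoked in the proof of Lemma \ref{lem:homog_pushouts_int}) applies.

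For (2), fix a flat morphism $g : Y \to X_3$ and form the base-changed square. Two ingredients combine. First, since $f'$ is affine and both ${i}'$ and ${i}'f$ are closed immersions, flat base change yields natural isomorphisms $g^*{i}'_*\Orb_{X_2} \cong ({i}'_Y)_*\Orb_{Y_2}$, $g^*f'_*\Orb_{X_1} \cong (f'_Y)_*\Orb_{Y_1}$, and $g^*({i}'f)_*\Orb_{X_0} \cong ({i}'_Y f_Y)_*\Orb_{Y_0}$; these can each be verified smooth-locally on $Y$ and on $X_3$, reducing to the classical scheme-theoretic statement. Second, $g^*$ is exact on quasicoherent sheaves, hence commutes with the fiber product appearing in the definition of a geometric pushout. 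Thus the hypothesized isomorphism on $X_3$ pulls back to the required isomorphism on $Y$. Part (3) is the dual fppf-descent assertion: the natural map $\Orb_{X_3} \to {i}'_*\Orb_{X_2}\times_{({i}'f)_*\Orb_{X_0}} f'_*\Orb_{X_1}$ is a morphism of quasicoherent $\Orb_{X_3}$-modules, and being an isomorphism is fppf-local; the same base-change compatibilities identify its $g^*$-pullback with the analogous map for the base-changed square, which is an isomorphism by hypothesis.

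For (1), it suffices to show that the canonical map $X_0 \to X_2\times_{X_3} X_1$ is an equivalence of algebraic stacks, a property that may be verified smooth-locally on the target. Pick a smooth cover $U_3 \to X_3$ by an affine scheme and pull the entire square back along $U_3$; by (2) the resulting square remains a geometric pushout, and since closed immersions and affine morphisms are preserved under base change, all four corners are now affine schemes. In that case the $2$-cartesianness is exactly \cite[Thm.~7.1]{MR2044495}, which identifies the fiber product of rings with the coordinate ring of the pushout scheme. The main subtlety to manage is the flat (resp.\ fppf) base-change compatibility of pushforward for algebraic stacks used in (2) and (3), but since every pushforward in sight is along an affine morphism or a closed immersion, this reduces by a smooth cover on the source to the classical scheme-theoretic statement, so no substantial new difficulty arises.
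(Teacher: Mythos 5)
Your proposal is correct and follows essentially the same route as the paper: parts (2) and (3) are the flat-descent statements (which you usefully make explicit via flat base change for pushforward along affine morphisms and exactness of flat pullback), and part (1) is reduced smooth-locally on $X_3$ to the affine case, where Ferrand's results on fiber products of rings give the $2$-cartesianness. The only cosmetic difference is the citation: the paper invokes \cite[Thm.~2.2]{MR2044495} (the module-theoretic statement) for the affine case rather than \cite[Thm.~7.1]{MR2044495}, but the substance is identical.
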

\begin{proof}
  The claim
  \itemref{lem:geom_po_ferr_new:item:2cart} is local on $X_3$ for the
  smooth topology, thus we may assume that everything in sight is
  affine---whence the result follows from \cite[Thm.\ 2.2]{MR2044495}.
  Claims \itemref{lem:geom_po_ferr_new:item:flb_gpo} and
  \itemref{lem:geom_po_ferr_new:item:fl_loc_gpo} are trivial
  applications of flat descent. 
\end{proof}
\begin{lem}\label{lem:sadm_fl_loc_SA3}
  Consider a $2$-commutative diagram of algebraic stacks: 
  \[
  \xymatrix@R-1.6pc@C-.5pc{   & D_0 \ar[dd]|!{[dl];[d]}\hole \ar[dl]
    \ar@{^(->}[rr] & & 
    D_1 \ar[dl] 
    \ar[dd] \\ D_2
    \ar[dd]  \ar@{^(->}[rr] & & D_3 \ar[dd]  &   \\   & C_0
    \ar@{^(->}[rr]|!{[r];[dr]}\hole 
    \ar[dl] & & C_1 \ar[dl]\\ 
    C_2 \ar@{^(->}[rr] & &C_3  &   } 
  \]
  where the back and left faces of the cube are $2$-cartesian, the
  top and bottom faces are geometric pushout diagrams, and for $i=0$, $1$,
  $2$, the morphisms $D_i \to C_i$ are flat. Then, all
  faces of the cube are $2$-cartesian and the morphism $D_3 \to C_3$
  is flat. 
\end{lem}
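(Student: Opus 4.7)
The conclusions---the right and front faces being $2$-cartesian, and $D_3 \to C_3$ being flat---are all fppf-local on $C_3$, and the hypotheses on the cube are preserved under flat base change on $C_3$: the $2$-cartesianness of the back and left faces and flatness of the vertical morphisms are preserved under any base change, while the geometric pushout property of the top and bottom faces is preserved under flat base change by Lemma~\ref{lem:geom_po_ferr_new}(\ref{lem:geom_po_ferr_new:item:flb_gpo}). The plan is therefore to reduce first to the case where $C_3$ is an affine scheme---hence so are $C_0$, $C_1$, $C_2$, by the structure of the bottom geometric pushout---and then to reduce the top face to affines by passing to a smooth chart on $D_3$, where I can invoke \cite[Thm.~2.2]{MR2044495}.

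After this first reduction, I would pick a smooth surjection $U \to D_3$ with $U$ an affine scheme, and set $U_i := U \times_{D_3} D_i$ for $i = 0, 1, 2$. Each $U_i$ is affine: for $U_2$ because $D_2 \hookrightarrow D_3$ is a closed immersion, for $U_1$ because $D_1 \to D_3$ is affine, and for $U_0 = U_1 \times_{D_1} D_0$ because $D_0 \hookrightarrow D_1$ is a closed immersion. Applying Lemma~\ref{lem:geom_po_ferr_new}(\ref{lem:geom_po_ferr_new:item:flb_gpo}) to the smooth base change $U \to D_3$ of the top face shows that the induced square on the $U_i$ remains a geometric pushout. Pulling back the back and left $2$-cartesian squares along $U \to D_3$ yields
\[
U_0 = U_1 \times_{C_1} C_0 \quad \text{and} \quad U_0 = U_2 \times_{C_2} C_0,
\]
while each $U_i \to C_i$ is flat, being the composition of the smooth base change $U_i \to D_i$ with the flat morphism $D_i \to C_i$.

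At this stage I would have a purely affine cube satisfying all the cubical hypotheses of the lemma. This is precisely the affine case of the statement, handled by Ferrand's theorem \cite[Thm.~2.2]{MR2044495}, which simultaneously yields that $U \to C_3$ is flat and that $U_j = U \times_{C_3} C_j$ for $j = 1, 2$. Since $U \to D_3$ is a smooth surjection, flat descent propagates these affirmative statements back to the original cube: $D_3 \to C_3$ is flat, and the right and front faces are $2$-cartesian. The genuine mathematical content is off-loaded to Ferrand; the main obstacle in writing this up is simply the careful bookkeeping---checking that at each step the flat base change really does hit the face one wants it to hit, and that the relevant properties (geometric pushout, $2$-cartesianness, flatness) are all stable under the base changes actually performed.
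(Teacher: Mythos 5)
Your proposal is correct and follows essentially the same route as the paper: both reduce to the affine case by smooth/fppf localization on $C_3$ and $D_3$ (using Lemma~\ref{lem:geom_po_ferr_new}\itemref{lem:geom_po_ferr_new:item:flb_gpo} for stability of geometric pushouts under flat base change, and descent for the conclusions), and then invoke Ferrand \cite[Thm.~2.2]{MR2044495} for the resulting statement about fiber products of rings and modules. The paper simply compresses the reduction into one sentence; your write-up spells out the bookkeeping the paper leaves implicit.
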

\begin{proof}
  By Lemma
  \ref{lem:geom_po_ferr_new}\itemref{lem:geom_po_ferr_new:item:flb_gpo},
  this is all smooth local on $C_3$ and $D_3$, thus we immediately 
  reduce to the case where everything in sight is affine. Fix a
  diagram of rings $[A_2 \rightarrow A_0 \xleftarrow{p} A_1]$ where $p
  : A_1\to A_0$ is surjective. For $i=0$, $1$, $2$ fix flat
  $A_i$-algebras $B_i$, and $A_0$-isomorphisms
  $B_2\tensor_{A_2} A_0 \cong B_0$ and $B_1\tensor_{A_1} A_0 \cong
  B_0$. Set $A_3 = A_2\times_{A_0} A_1$ and $B_3 = B_2\times_{B_0} 
  B_1$, then we have to prove that the $A_3$-algebra $B_3$ is
  flat, the natural maps $B_3\tensor_{A_3} A_i \to B_i$ are
  isomorphisms, and that these isomorphisms are compatible with the
  given isomorphisms. This is an immediate consequence of \cite[Thm.\
  2.2]{MR2044495}, since these are questions about modules.   
\end{proof}
We omit the proof of the following easy result from commutative
algebra. 
\begin{lem}\label{lem:nilp_fp}
  Fix a surjection of rings $A \to A_0$ and let $I=\ker(A\to
  A_0)$. Suppose that there is a $k$ such that $I^k = 0$.
  \begin{enumerate}
  \item \label{item:nilp_surj} Given a map of $A$-modules $u : M
    \to N$ such that $u\tensor_A A_0$ is surjective,  then $u$ is
    surjective.  
  \item \label{item:nilp_mod} For an $A$-module $M$, if $M\tensor_A
    A_0$ is finitely generated, then $M$ is finitely generated.
  \item Given an $A$-algebra $B$ and a $B$-module $M$, let $M_0 =
    A_0\tensor_A M$.  
    \begin{enumerate}
    \item \label{item:nilp_fpmod} If $M$ is $A$-flat and $M_0$ is
      $B_0$-finitely presented, then $M$ is $B$-finitely presented.      
    \item \label{item:nilp_ft} If $B_0$ is a finite type
      $A_0$-algebra, then $B$ is a  finite type $A$-algebra. 
    \item \label{item:nilp_fpf} If $B$ is a flat $A$-algebra and $B_0$ is
      a finitely presented $A_0$-algebra, then $B$ is a finitely
      presented $A$-algebra.
    \end{enumerate}
  \end{enumerate}
\end{lem}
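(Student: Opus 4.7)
The plan is to treat the five parts in order, each essentially bootstrapping from the previous one, with the nilpotence of $I$ entering only through a Nakayama-type argument at the start.

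For (\ref{item:nilp_surj}), I would consider $C=\coker u$. The hypothesis that $u\tensor_A A_0$ is surjective gives $C/IC = C \tensor_A A_0 = 0$, so $C = IC$. Iterating this identity $k$ times yields $C = I^k C = 0$. For (\ref{item:nilp_mod}), I would lift a finite set of generators $\bar m_1,\dots,\bar m_n$ of $M\tensor_A A_0$ to elements $m_i \in M$, and apply (\ref{item:nilp_surj}) to the induced $A$-linear map $A^{\oplus n} \to M$, whose reduction mod $I$ is surjective by construction.

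For (\ref{item:nilp_fpmod}), I note first that $M_0 = M \tensor_A A_0 = M \tensor_B B_0$, since $B_0 = B \tensor_A A_0$, and the kernel $IB$ of $B \to B_0$ satisfies $(IB)^k = 0$. Lifting generators of $M_0$ as above produces a $B$-linear surjection $B^{\oplus n} \twoheadrightarrow M$, by the version of (\ref{item:nilp_mod}) applied to the surjection $B \to B_0$. Let $K$ denote the kernel. The $A$-flatness of $M$ implies $\Tor_1^A(M,A_0) = 0$, so the sequence $0 \to K \to B^{\oplus n} \to M \to 0$ remains exact after $-\tensor_A A_0$, identifying $K_0$ with the kernel of $B_0^{\oplus n} \twoheadrightarrow M_0$. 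Since $M_0$ is $B_0$-finitely presented, $K_0$ is a finitely generated $B_0$-module, and another application of (\ref{item:nilp_mod}) shows $K$ is finitely generated over $B$.

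For (\ref{item:nilp_ft}), I would lift algebra generators $\bar b_1,\dots,\bar b_n$ of $B_0$ to elements $b_i \in B$ and apply (\ref{item:nilp_surj}) to the $A$-algebra map $A[x_1,\dots,x_n] \to B$ sending $x_i \mapsto b_i$, which is surjective modulo $I$ by construction. For (\ref{item:nilp_fpf}), part (\ref{item:nilp_ft}) writes $B = A[x_1,\dots,x_n]/J$, and the hypotheses that $B$ is $A$-flat and $B_0$ finitely presented over $A_0$ place us exactly in the situation of (\ref{item:nilp_fpmod}) with the role of $B$ played by $A[x_1,\dots,x_n]$ and the role of $M$ played by $B$; that result then yields finite generation of $J$. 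The only mildly subtle step is (\ref{item:nilp_fpmod}), and specifically the need to invoke $A$-flatness rather than $B$-flatness of $M$ to control the kernel, which is precisely the content of the vanishing of $\Tor_1^A(M,A_0)$; everything else is a clean reduction.
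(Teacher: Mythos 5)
Your proof is correct, and it is the standard argument: the paper explicitly omits the proof of this lemma (``We omit the proof of the following easy result from commutative algebra''), so there is no authorial proof to compare against, but your chain of reductions---Nakayama via nilpotence for \itemref{item:nilp_surj}, lifting generators for \itemref{item:nilp_mod} and \itemref{item:nilp_ft}, and using $\Tor_1^A(M,A_0)=0$ to identify the reduction of the kernel of a presentation for \itemref{item:nilp_fpmod} and \itemref{item:nilp_fpf}---is exactly the intended one. You correctly isolate the one genuinely necessary hypothesis, namely $A$-flatness (not $B$-flatness) of $M$ to keep the presentation exact after $-\tensor_A A_0$.
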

\begin{lem}\label{lem:fl_nilimmers}
  Fix a morphism $f : X \to Y$ of algebraic stacks and a locally
  nilpotent closed immersion $Y_0\hookrightarrow Y$. If 
  $f$ is flat, then it is locally of finite presentation
  (resp.\ smooth) if and only if the same is true of the map $X\times_Y
  Y_0 \to Y_0$.   
\end{lem}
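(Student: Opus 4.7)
The ``only if'' direction is immediate, since both local finite presentation and smoothness are preserved under arbitrary base change. For the converse, my plan is to reduce to a pure commutative algebra statement handled by Lemma~\ref{lem:nilp_fp}.

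Both properties are smooth-local on source and target (for morphisms of algebraic stacks), so I first pass to smooth atlases: choose a smooth surjection $V \to Y$ with $V = \spec A$ affine, then a smooth surjection $U \to X \times_Y V$ with $U = \spec B$ affine, and let $A \to B$ be the resulting flat ring map. The base change of the given locally nilpotent closed immersion pulls back to a closed subscheme of $V$ cut out by some ideal $I \subset A$ which is, by the paper's convention, fppf-locally nilpotent. Since $\spec A$ is quasi-compact I can pick a single fppf cover of $V$ over which some fixed power $I^n$ pulls back to zero, and faithfully flat descent then forces $I^n = 0$ in $A$ itself. The morphism $\spec(B/IB) \to \spec(A/I)$ is obtained from $X_0 \to Y_0$ by smooth base change, so the hypothesis gives that $B/IB$ is locally of finite presentation (respectively, smooth) over $A/I$.

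The locally-of-finite-presentation case is now precisely Lemma~\ref{lem:nilp_fp}\itemref{item:nilp_fpf}: a flat $A$-algebra $B$ whose reduction modulo a nilpotent ideal is finitely presented over $A/I$ is itself finitely presented over $A$. For the smooth case, local finite presentation follows from the previous part, so it remains only to verify that every fiber of $A \to B$ is geometrically regular. Since $I$ is nilpotent, every prime $\mathfrak{p} \subset A$ contains $I$ and corresponds to a prime $\bar{\mathfrak{p}} \subset A/I$ with $\kappa(\mathfrak{p}) = \kappa(\bar{\mathfrak{p}})$, and $B \otimes_A \kappa(\mathfrak{p}) \cong (B/IB) \otimes_{A/I} \kappa(\bar{\mathfrak{p}})$; so the fibers of $A \to B$ coincide with those of the smooth $A/I$-algebra $B/IB$ and are geometrically regular. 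Hence $A \to B$ is smooth, proving the converse.

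The only genuinely non-formal step is the conversion of the paper's fppf-local nilpotency of the ideal sheaf to honest nilpotency of $I$ on a quasi-compact affine atlas; everything else is a translation of stack-theoretic hypotheses into ring theory and then an appeal to Lemma~\ref{lem:nilp_fp}\itemref{item:nilp_fpf} together with the standard characterization of smoothness as flat plus locally of finite presentation plus geometrically regular fibers.
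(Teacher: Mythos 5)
Your proof is correct and follows essentially the same route as the paper's: reduce by smooth-localization to the affine statement of Lemma~\ref{lem:nilp_fp}\itemref{item:nilp_fpf} for the finite-presentation claim, and deduce the smooth case from it via the fibral criterion for smoothness of flat, locally finitely presented morphisms (the fibers of $X \to Y$ being identified with those of $X_0 \to Y_0$ because the ideal is nilpotent). Your extra care in descending the fppf-local nilpotency of the ideal to honest nilpotency on a quasi-compact affine chart is a point the paper leaves implicit, but it is handled correctly.
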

\begin{proof}
  Observe that for flat
  morphisms which are locally of finite presentation, smoothness is a
  fibral condition, thus follows from the first claim. The first claim is
  smooth local on $Y$ and $X$, thus follows from Lemma
  \ref{lem:nilp_fp}\itemref{item:nilp_fpf}. 
\end{proof}
\begin{lem}\label{lem:lift_pres}
  Consider a locally nilpotent closed immersion of algebraic stacks $X
  \hookrightarrow X'$ and a smooth morphism $U \to X$ where $U$ is an
  affine scheme. Then, there exists a smooth morphism $U' \to X'$
  which pulls back to $U \to X$.   
\end{lem}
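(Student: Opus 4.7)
The plan is to reduce the problem to the case where $U \to X$ factors through an étale morphism into a relative affine space over $X$, and then to invoke topological invariance of the étale site for locally nilpotent closed immersions.

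First, I would use that a smooth morphism whose source is an affine scheme factors étale-locally on the source as an étale morphism into $\Aff^n_X$ composed with the canonical projection $\Aff^n_X \to X$. Applied to $U \to X$, this yields a finite étale covering $\{U_i \to U\}_{i=1}^k$ by affine schemes together with factorizations $U_i \xrightarrow{q_i} \Aff^{n_i}_X \to X$ in which each $q_i$ is étale; finiteness uses that $U$ is quasicompact.

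Second, for each $i$, I would lift as follows. The canonical projection $\Aff^{n_i}_{X'} \to X'$ is a smooth lift of $\Aff^{n_i}_X \to X$, and the induced closed immersion $\Aff^{n_i}_X \hookrightarrow \Aff^{n_i}_{X'}$ is locally nilpotent. Topological invariance of the small étale site for locally nilpotent closed immersions of algebraic stacks---which follows by smooth descent from the corresponding statement for schemes, cf.\ \cite[IV.18.1.2]{EGA}---then produces a unique étale morphism $q'_i : U'_i \to \Aff^{n_i}_{X'}$ lifting $q_i$. The composition $U'_i \to \Aff^{n_i}_{X'} \to X'$ is smooth and restricts along $X \hookrightarrow X'$ to $U_i \to X$.

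Third, to assemble the $U'_i$ into a single algebraic space $U'$ smooth over $X'$, I would exploit the uniqueness of the étale lifts just used. The \v{C}ech groupoid $\coprod_{i,j} U_i \times_U U_j \rightrightarrows \coprod_i U_i$ is an étale equivalence relation of $X$-schemes, so it lifts to a unique étale equivalence relation $\coprod_{i,j} (U_i \times_U U_j)' \rightrightarrows \coprod_i U'_i$ of $X'$-schemes; the groupoid and cocycle axioms transfer automatically by uniqueness. Taking the quotient in the category of algebraic spaces produces $U'$, with a smooth morphism $U' \to X'$ (smoothness being étale-local on the source) satisfying $U' \times_{X'} X \cong U$. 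The main obstacle is the careful verification of topological invariance of the étale site for a locally nilpotent closed immersion of algebraic stacks: choosing a smooth atlas of $X'$ reduces this to the known case for algebraic spaces, but the descent-theoretic bookkeeping requires attention.
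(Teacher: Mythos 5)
Your route is genuinely different from the paper's, and its first two steps are sound: $U\to X$ does factor Zariski-locally on the affine source as an \'etale morphism to some $\Aff^{n}_{X}$ (choose local sections whose differentials trivialize the locally free sheaf $\Omega_{U/X}$), and topological invariance of the \'etale site along the locally nilpotent closed immersion $\Aff^{n}_{X}\hookrightarrow\Aff^{n}_{X'}$ produces canonical \'etale lifts $U_i'\to\Aff^{n}_{X'}$ restricting to the $U_i$. The gap is in the third step. The uniqueness furnished by topological invariance identifies \'etale morphisms \emph{over a fixed lifted base}; it does not identify the two lifts of a double overlap $U_{ij}=U_i\times_U U_j$ that your construction produces, namely the unique \'etale lift of $U_{ij}\to U_i$ over $U_i'$ and the unique \'etale lift of $U_{ij}\to U_j$ over $U_j'$. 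These are two flat lifts of the same smooth $X$-scheme $U_{ij}$ over $X'$; for a square-zero thickening with ideal $M$ they are isomorphic only up to a \emph{non-unique} isomorphism, the ambiguity being a torsor under $H^0(U_{ij},\SHom_{\Orb_{U_{ij}}}(\Omega_{U_{ij}/X},M))$ (e.g.\ for $X=\spec k\hookrightarrow X'=\spec k[\epsilon]$ and $U_{ij}=\Aff^1$, any $x\mapsto x+\epsilon g(x)$). Hence the transition isomorphisms must be \emph{chosen}, and the cocycle condition on triple overlaps is obstructed by a class in $\check{H}^2$ of the covering with coefficients in this sheaf; nothing "transfers automatically by uniqueness." That class does vanish because $U$ is affine and the coefficient sheaf is quasicoherent, but supplying that argument is precisely the content of the lemma, and it also forces you to first reduce to the square-zero case and induct, which your write-up never does.

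For comparison, the paper's proof packages this cohomological computation in one stroke: after reducing to a square-zero immersion (using quasicompactness of $U$), \cite[Thm.~1.4]{MR2206635} places the obstruction to a flat lift in $\Ext^2_{\Orb_U}(L_{U/X},M)\cong H^2(U,\SHom_{\Orb_U}(\Omega_{U/X},M))=0$, and Lemma~\ref{lem:fl_nilimmers} upgrades the flat lift to a smooth one. If you want to keep your \'etale-local construction, you must replace the appeal to "uniqueness" by this Čech/$\Ext^2$ vanishing; as written the gluing step does not go through.
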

\begin{proof}
  Since $U$ is quasicompact, it is sufficient to treat the case where
  the locally nilpotent closed immersion $X \hookrightarrow X'$ is
  square zero. Then, \cite[Thm.\ 
  1.4]{MR2206635} implies that the obstruction to the existence of
  a flat lift lies in the abelian group
  $\Ext^2_{\Orb_U}(L_{U/X},M)$, for some quasicoherent
  $\Orb_U$-module $M$. The morphism $U \to X$ is smooth, $U$ is
  affine, and the $\Orb_U$-module $\SHom_{\Orb_U}(\Omega_{U/X},M)$ is
  quasicoherent, thus $\Ext^2_{\Orb_U}(L_{U/X},M) =
  H^2(U,\SHom_{\Orb_U}(\Omega_{U/X},M)) = 0$. Finally,
  by Lemma \ref{lem:fl_nilimmers}, any such lift that is flat, is also
  smooth.  
\end{proof}
\begin{proof}[Proof of Proposition \ref{prop:schlessinger_pushouts_new}]  
  Throughout, the following notation will be used.
  \begin{enumerate}
  \item[(i)] For ${d}=1$, $2$, $3$, let $\Coll_{d}$ denote the full
    $2$-subcategory of the $2$-category of algebraic stacks, with
    objects those algebraic stacks whose ${d}$th diagonal is
    affine. Note that $\Coll_3$ is the full $2$-category of algebraic
    stacks. 
  \item[(ii)] Let $\Coll_0$ denote the category of affine schemes.
  \item[(iii)] Fix an algebraic stack $Y$ and a collection of
    morphisms $\{Y^l \to Y\}_{l\in \Lambda}$. For $i$, $j$, $k\in \Lambda$, set
    $Y^{ij}$ (resp.\ $Y^{ijk}$) to be $Y^i\times_Y Y^j$
    (resp. $Y^i\times_Y Y^j\times_Y Y^k$).
  \end{enumerate}  
  \emph{Claim:} Let ${d}=0$, $1$, or $2$. Suppose that any diagram
  $[X_2\xleftarrow{f} X_0 \xrightarrow{{i}} X_1]$ in $\Coll_{d}$, where
  ${i}$ is a locally nilpotent closed immersion and $f$ is affine,
  admits a geometric pushout in $\Coll_{d}$, such that the
  resulting geometric pushout diagram is $2$-cartesian and
  $2$-cocartesian in the $2$-category $\Coll_{d}$, then the same is true
  of $\Coll_{{d}+1}$.
 
  To see that the Claim is sufficient to prove the Proposition,
  observe that given a diagram $[X_2\xleftarrow{f} X_0
  \xrightarrow{{i}} X_1]$ in $\Coll_0$, where ${i}$ is a locally 
  nilpotent closed immersion and $f$ is affine, the affine scheme
  $X_3=\spec (\Gamma(\Orb_{X_2}) \times_{\Gamma(\Orb_{X_0})}
  \Gamma(\Orb_{X_1}))$ is a geometric pushout of this diagram. The
  resulting geometric pushout diagram is trivially seen to be
  cocartesian in $\Coll_0$; by Lemma
  \ref{lem:geom_po_ferr_new}\itemref{lem:geom_po_ferr_new:item:2cart},
  it is also $2$-cartesian. By induction, the Claim now implies that
  every such diagram in $\Coll_3$ admits a geometric pushout, and the
  resulting geometric pushout diagram is $2$-cocartesian and
  $2$-cartesian in $\Coll_3$. Since every algebraic stack belongs to 
  $\Coll_3$, we deduce the Proposition.\\
  \emph{Proof of Claim:} 
  First, we show that geometric pushout diagrams in $\Coll_{{d}+1}$ are
  $2$-cocartesian (they are always $2$-cartesian by Lemma
  \ref{lem:geom_po_ferr_new}\itemref{lem:geom_po_ferr_new:item:2cart}). Thus,
  we must uniquely complete all $2$-commutative diagrams: 
  \vspace{-8mm}
  \[
  \xymatrix@ur@-.8pc{X_0 \ar@{^(->}[r]^{{i}} \ar[d]_f
    \drtwocell<\omit>{\alpha} & X_1 
    \ar[d]^{f'} \ar@/^1pc/[ddr]^{\psi_1} & \\  X_2
    \ar@{^(->}[r]_{{i}'} \ar@/_1pc/[drr]_{\psi_2} & X_3
    \drtwocell<\omit>{\beta} & \\ & & W,}    
  \vspace{-8mm}
  \]  
  where the square is a geometric pushout diagram in $\Coll_{{d}+1}$,
  ${i}$ is a locally nilpotent closed immersion, and $W \in
  \Coll_{{d}+1}$. Observe that if $\amalg_{l\in \Lambda} X_3^l \to
  X_3$ is a smooth cover of $X_3$, where each $X_3^l$ is affine, then
  $\forall i$, $j$, $k\in \Lambda$, we have that $X_3^{ij}$, $X_3^{ijk}
  \in \Coll_{d}$. By smooth descent, it is thus sufficient to prove
  that the diagram above is $2$-cocartesian when the $X_i\in
  \Coll_d$. If $d\neq 0$, we may repeat this argument 
  $(d-1)$-more times, to reduce to the case where $X_i\in
  \Coll_0$. That is, they are affine. 

  Now we show the uniqueness of completions of the diagram. Suppose
  that for $j=1$, $2$ we have a 
  $1$-morphism $g^j : X_3 \to W$ together with $2$-morphisms
  ${\gamma_1^j}: \psi_1 
  {\Rightarrow} g^j\circ f'$ and ${\gamma_2^j}  : g^j\circ {i}'  
  {\Rightarrow} \psi_2$, satisfying $(f^*\gamma_2^j)\circ \alpha
  \circ ({i}^*\gamma_1^j) = \beta$. We claim that there is a unique
  $2$-morphism $\eta : g^1 \Rightarrow g^2$ such that $\eta\circ
  \gamma^1_1 = \gamma_1^2$ and $\gamma_2^2\circ \eta = \gamma_2^1$. To see
  this, let $e : E = X_3\times_{(g^1,g^2),W\times W, \Delta} W \to X_3$
  be the equalizer of the pair of $1$-morphisms $g^1$ and $g^2$. Since
  $X_3$ affine and $W \in \Coll_{{d}+1}$, we have that $E \in
  \Coll_{d}$. By the universal property defining $E$, we obtain 
  $1$-morphisms $\tilde{f}' :   X_1 \to  E$ and $\tilde{{i}}' : X_2
  \to E$ such that $e\circ \tilde{f}' = f'$ and $e\circ
  \tilde{{i}}' = {i}'$, which are unique up to a unique
  $2$-morphism. Since our square is a geometric pushout diagram, by
  assumption it is $2$-cartesian in $\Coll_{d}$. Thus, there exists a
  $1$-morphism $s : X_3 \to  E$, which is unique up to a unique
  $2$-morphism, that is compatible with this data. Since $e\circ s =
  \ID{X_3}$, the definition of $E$ gives a 
  unique $2$-morphism $\eta : g^1 \Rightarrow g^2$ with the required
  properties. 

  Now we show the existence of a completion of the diagram. Fix a
  smooth presentation $\amalg_{l\in \Lambda} W^l \to W$, where each
  $W^l$ is an affine scheme. For $m\neq 3$ and $l\in \Lambda$ set
  $X_m^l=X_m\times_W W^l$. Since $W\in
  \Coll_{{d}+1}$ and the schemes $X_m$ are affine, for $m\neq 3$ the
  stacks $X^l_m$ all belong to $\Coll_{d}$. Thus, the geometric
  pushout $X_3^l$ of the diagram $[X_2^l  \leftarrow X_0^l 
  \rightarrow X_1^l]$ exists, and the resulting geometric pushout diagram is
  $2$-cocartesian in $\Coll_{d}$. In particular,
  there is a unique map $X_3^l \to W^l$ which is compatible with the
  data. Similarly, there is also a unique map $X_3^l \to X_3$ which is
  compatible with the data---by Lemmata \ref{lem:sadm_fl_loc_SA3} and 
  \ref{lem:fl_nilimmers} this map is smooth. By the uniqueness
  statement that we have already proven, we obtain a unique map
  $\amalg_{l\in \Lambda} X_3^l \to W$ which is compatible with the 
  data. Since the morphism $\amalg_{l\in \Lambda} X_3^l \to X_3$ is smooth and
  surjective, smooth descent gives a map $X_3 \to W$ completing the
  diagram.   
  
  Finally, we show that any diagram $[X_2 \xleftarrow{f} X_0
  \xrightarrow{{i}} X_1]$ in $\Coll_{{d}+1}$, where ${i}$ is a locally
  nilpotent closed immersion and $f$ is affine, admits a geometric
  pushout. Fix a smooth surjection $\amalg_{l\in \Lambda} X_2^l \to
  X_2$, where $X_2^l$ is an affine scheme $\forall l\in \Lambda$. Set  
  $X_0^l = X_2^l \times_{X_2} X_0$, then as $f$ is affine,
  the scheme $X_0^l$ also affine. By Lemma \ref{lem:lift_pres}, the
  resulting smooth morphism $X_0^l \to X_0$ 
  lifts to a smooth morphism $X^l_1 \to X_1$, with $X^l_1$ affine, and
  $X_0^l\cong X^l_1\times_{X_1} X_0$. As before, $\forall m\neq 3$ and
  $\forall i$, $j$, $k\in \Lambda$ we have $X_m^{ij}$, 
  $X_m^{ijk} \in \Coll_d$. Thus, for $I=i$, $ij$ or $ijk$, a geometric
  pushout $X_3^I$ of the diagram
  $[X_2^I \leftarrow X_0^I \rightarrow X_1^I]$  
  exists, and belongs to $\Coll_d$. We have already shown that
  geometric pushouts in $\Coll_{d+1}$ are $2$-cartesian in
  $\Coll_{d+1}$, thus there are uniquely induced morphisms $X^{ij}_m
  \to X^{i}_m$. For $m\neq 3$, these morphisms are clearly smooth,
  and by Lemmata \ref{lem:sadm_fl_loc_SA3} and \ref{lem:fl_nilimmers}
  the morphisms $X^{ij}_3 \to X^i_3$ are smooth. It easily verified
  that the universal properties give rise to a smooth groupoid
  $[\amalg_{i,j\in \Lambda} X^{ij}_3 \rightrightarrows 
  \amalg_{k\in \Lambda} X_3^k]$. The quotient $X_3$ of this groupoid
  in the category of stacks is algebraic. By Lemma,
  \ref{lem:geom_po_ferr_new}\itemref{lem:geom_po_ferr_new:item:fl_loc_gpo}
  it is also a geometric pushout of the diagram $[X_2 \leftarrow X_0
  \rightarrow X_1]$.
\end{proof}
\section{Local Tor functors on algebraic stacks}\label{app:tor}
The aim of the section is to state some easy generalizations of
\cite[{III}.6.5]{EGA} to algebraic stacks. We omit the proofs
as they are simple descent arguments. 
\begin{thm}\label{thm:tor}
  Fix a scheme $S$ and a $2$-cartesian diagram of algebraic
  $S$-stacks:
  \[
  \xymatrix@-0.8pc{X_3 \ar[r]^-{f_1'} \ar[d]_{f_2'} & X_2 \ar[d]^{f_2}\\ X_1
    \ar[r]^{f_1} & X_0.}
  \]
  Then, for each integer $i\geq 0$, there exists a
  natural bifunctor: 
  \[
  \STor^{X_0,f_1,f_2}_i( - , - ) : \QCOH{X_1} \times
  \QCOH{X_2} \to \QCOH{X_3},
  \]
  The family of bifunctors $\{\STor_i^{X_0,f_1,f_2}(-,-)\}_{i\geq 0}$
  forms a $\partial$-functor in each variable. Moreover,
  there is a natural isomorphism for all $M\in \QCOH{X_1}$ and $N\in
  \QCOH{X_2}$: 
  \[
  \STor^{X_0,f_1,f_2}_0(M,N) \cong f_2'^*M \tensor_{\Orb_{X_3}} f_1'^*N.
  \]
  If $M$ or $N$ is $X_0$-flat, then for all
  $i>0$ we have $\STor_i^{X_0,f_1,f_2}(M,N) = 0$. 
  In addition, if the 
  algebraic stacks $X_1$ and $X_0$ are locally noetherian and the morphism
  $f_2$ is locally of finite type, then the bifunctor above restricts to
  a bifunctor:
  \[
  \STor^{X_0,f_1,f_2}_i( - , - ) : \COH{X_1} \times
 \COH{X_2} \to \COH{X_3}.
  \]
\end{thm}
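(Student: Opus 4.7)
The plan is to construct the bifunctors $\STor^{X_0,f_1,f_2}_i$ by smooth descent from the classical Tor modules of commutative algebra. First, fix compatible smooth presentations: choose an affine smooth atlas $U_0 \to X_0$, then affine smooth atlases $U_k \to X_k \times_{X_0} U_0$ for $k = 1, 2$, and set $U_3 := U_1 \times_{U_0} U_2$, which is an affine smooth atlas of $X_3$. Writing $A_k = \Gamma(U_k, \Orb)$, so $A_3 \cong A_1 \otimes_{A_0} A_2$, given $M \in \QCOH{X_1}$ and $N \in \QCOH{X_2}$ with pullbacks $M'$ on $U_1$ and $N'$ on $U_2$, declare the candidate local Tor on $U_3$ to be the quasicoherent sheaf associated to the $A_3$-module $\Tor^{A_0}_i(M', N')$.

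The key technical step is establishing flat base change, which powers the descent. Given a second collection of compatible smooth covers $V_k \to U_k$ (with $V_3 = V_1 \times_{V_0} V_2$) corresponding to flat ring extensions $A_k \to B_k$, a flat resolution of $M'$ over $A_0$ pulls back to a flat resolution over $B_0$, producing a canonical isomorphism
\[
\Tor^{A_0}_i(M', N') \otimes_{A_3} B_3 \;\cong\; \Tor^{B_0}_i(M' \otimes_{A_1} B_1,\, N' \otimes_{A_2} B_2).
\]
This is precisely \cite[III.6.5]{EGA} in its affine form. Since these comparison maps are canonical, a routine cocycle check on triple overlaps of atlases gives descent data, producing a well-defined quasicoherent sheaf $\STor^{X_0,f_1,f_2}_i(M, N)$ on $X_3$ (via the lisse-\'etale site) that is independent of choices and visibly bifunctorial in $(M, N)$.

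The remaining assertions follow by reduction to the affine atlas. The $\partial$-functor property in each variable descends from the classical long exact sequence of Tor on the atlas, using that smooth pullback is exact and exactness of sequences of quasicoherent sheaves is smooth-local. The identification $\STor_0 \cong f_2'^* M \tensor_{\Orb_{X_3}} f_1'^* N$ reduces on the atlas to the standard $\Tor_0^{A_0}(M', N') = M' \tensor_{A_0} N' \cong (M' \tensor_{A_1} A_3) \tensor_{A_3} (A_3 \tensor_{A_2} N')$. Vanishing of $\STor_i$ for $i > 0$ when $M$ or $N$ is $X_0$-flat is likewise smooth-local and reduces to the classical vanishing of Tor on flat modules. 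Finally, under the locally noetherian hypotheses with $f_2$ locally of finite type, one may arrange the atlases so that $A_0$ is noetherian, $A_1, A_2$ are finite type $A_0$-algebras, and the pullbacks $M', N'$ are finitely generated; the classical fact that $\Tor^{A_0}_i(M', N')$ is then finitely generated over $A_3$ gives coherence after descent.

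The main obstacle is purely bookkeeping: verifying that the canonical comparison isomorphisms for different atlas choices assemble into genuine descent data satisfying the cocycle condition. Because flat base change for classical Tor is already known and the isomorphisms are canonical, this cocycle condition is automatic, which is why the author describes the proof as a simple descent argument.
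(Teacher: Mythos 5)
The paper omits the proof entirely, stating only that these are ``simple descent arguments'' generalizing \cite[{III}.6.5]{EGA}, and your proposal supplies exactly that argument --- affine-local definition of $\STor^{X_0,f_1,f_2}_i$ via the classical $\Tor^{A_0}_i(M',N')$ with its $A_3=A_1\otimes_{A_0}A_2$-module structure, flat base change to produce descent data on the double and triple overlaps of the atlas, and affine-local verification of the $\partial$-functor, degree-zero, vanishing, and coherence claims --- so it is correct and follows the paper's intended route. One small slip: the hypotheses only make $f_2$ locally of finite type, so you may arrange $A_0$, $A_1$ noetherian and $A_2$ of finite type over $A_0$, but not $A_1$ of finite type over $A_0$; this is still enough for $\Tor^{A_0}_i(M',N')$ to be finite over $A_3$ (resolve $N'$ by finite free modules over a polynomial ring $A_0[x_1,\dots,x_n]$ surjecting onto $A_2$, which are $A_0$-flat, and use that $A_1[x_1,\dots,x_n]$ is noetherian and surjects onto $A_3$).
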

Another result that will be useful is the following.
\begin{lem}\label{lem:aff_tor}
  Fix a scheme $S$ and a $2$-cartesian diagram of algebraic $S$-stacks
  \[
  \xymatrix@-0.8pc{W\times_Z Y \ar[d]_{g_W} \ar[r]^{h'} & X\times_Z Y
    \ar[r]^-{f_Y} \ar[d]_{g_X} & Y \ar[d]^g \\W \ar[r]^h & X \ar[r]^f & Z, }
  \]
  where the morphism $h$ is affine. Then, for any $M \in \QCOH{W}$, 
  $N\in \QCOH{Y}$, and $i\geq 0$, there is a natural isomorphism of
  quasicoherent $\Orb_{X\times_Z Y}$-modules:
  \[
  \STor^{Z,f,g}_i(h_*M,N) \cong h'_*\STor^{Z,f\circ h,g}_i(M,N).
  \]
\end{lem}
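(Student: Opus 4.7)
The plan is to reduce to a purely affine, ring-theoretic statement by smooth descent, and then to recognize both sides as the ordinary $\Tor$ of $M$ and $N$ over the base ring, equipped with compatible module structures.

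First I would observe that the assertion is smooth-local on $Z$, $X$, and $Y$. More precisely, since $h$ is affine, any smooth cover of $X$ (resp.\ of $Y$, resp.\ of $Z$) pulls back to a smooth cover of $W$ (resp.\ of $X\times_Z Y$ and $W\times_Z Y$), and the formation of $\STor$ on a fixed cartesian square commutes with flat base change by the usual descent properties recorded in Theorem \ref{thm:tor}. Likewise, $h_*$ and $h'_*$ commute with flat base change because $h$ and $h'$ are affine. Hence it suffices to produce the isomorphism after choosing smooth atlases by affine schemes for $Z$, $X$ and $Y$, at which point we reduce to the case $Z=\spec A$, $X=\spec B$, $Y=\spec C$, $W=\spec D$, with $A\to B$, $A\to C$, and $B\to D$ the structure maps.

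In this affine setting the $2$-cartesian squares correspond to the ring diagrams
\[
D\otimes_A C \leftarrow D \leftarrow B \leftarrow A\to C\to B\otimes_A C\to D\otimes_A C,
\]
and by the construction of $\STor$ in the affine case one has, for a $D$-module $M$ and a $C$-module $N$,
\[
\STor^{Z,f,g}_i(h_*M,N)\;\widetilde{=}\;\Tor^A_i(M,N),
\]
viewed as a $B\otimes_A C$-module via the $B$-action on $M$, and
\[
\STor^{Z,f\circ h,g}_i(M,N)\;\widetilde{=}\;\Tor^A_i(M,N)
\]
viewed as a $D\otimes_A C$-module via the $D$-action on $M$. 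The pushforward $h'_*$ corresponds on modules to restriction of scalars along $B\otimes_A C\to D\otimes_A C$, so both sides of the desired isomorphism are naturally identified with the \emph{same} abelian group $\Tor^A_i(M,N)$ endowed with the $B\otimes_A C$-module structure coming from the composition $B\to D$ and the $C$-action on $N$. To see this identification, pick an $A$-flat resolution $P_\bullet\to N$; then $M\otimes_A P_\bullet$ computes $\Tor^A_\ast(M,N)$, and the formation of its homology is insensitive to whether we remember the $D$-structure, the $B$-structure, or just the $A$-structure on $M$ and the $C$-structure on $P_\bullet$—only the module structures on the outcome differ, and these differ precisely by restriction of scalars along $B\otimes_A C\to D\otimes_A C$.

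The isomorphism so produced is evidently functorial in $M$ and $N$ and compatible with the connecting maps in the $\partial$-functor structure, so it glues under the smooth descent of the first paragraph to give the required natural isomorphism of quasicoherent sheaves on $X\times_Z Y$. The only genuine subtlety to check is the compatibility of the affine-local identifications under changes of smooth atlas, but this is immediate from the fact that both sides, viewed as functors $\QCOH{W}\times \QCOH{Y}\to \QCOH{X\times_Z Y}$, are constructed by the same flat-resolution recipe and each commutes with the flat base changes involved in comparing atlases; hence the main (mild) obstacle is really notational bookkeeping rather than any substantive difficulty.
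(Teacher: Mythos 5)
Your argument is correct and is precisely the ``simple descent argument'' that the paper explicitly omits for this lemma: reduce by smooth (flat) descent to the affine case, where both sides are the ordinary $\Tor^A_i(M,N)$ and differ only by restriction of scalars along $B\otimes_A C\to D\otimes_A C$. The affine identification and the compatibility checks you flag are exactly what is needed, so nothing is missing.
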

\providecommand{\bysame}{\leavevmode\hbox to3em{\hrulefill}\thinspace}
\providecommand{\MR}{\relax\ifhmode\unskip\space\fi MR }
\providecommand{\MRhref}[2]{%
  \href{http://www.ams.org/mathscinet-getitem?mr=#1}{#2}
}
\providecommand{\href}[2]{#2}

\end{document}